\newtheorem{theorem}{Theorem}[section]
\newtheorem{lemma}[theorem]{Lemma}
\newcommand{\inrot}{\mathbin{\rotatebox[origin=c]{90}{$\in$}}}
\newtheorem{example}[theorem]{Example}
\newtheorem{proposition}[theorem]{Proposition}
\newtheorem{problem}[theorem]{Problem}
\newtheorem{corollary}[theorem]{Corollary}
\newtheorem{conjecture}[theorem]{Conjecture}
\theoremstyle{remark}
\newtheorem{remark}[theorem]{Remark}
\numberwithin{equation}{section}
\begin{document}

\title{Eigenconfigurations of Tensors}

\author{Hirotachi Abo}
\address{Department of Mathematics,
University of Idaho,
Moscow, ID 83844, USA}
\email{abo@uidaho.edu}
\thanks{We acknowledge support by the
National Science Foundation (DMS-1419018) and
the US-UK Fulbright Commission.
This project started at the Simons Institute
for the Theory of Computing.
We are grateful to 
Manuel Kauers, Giorgio Ottaviani and
Cynthia Vinzant for their help.}

\author{Anna Seigal}
\address{Department of Mathematics,
University of California,
Berkeley, CA 94720, USA}
\email{seigal@berkeley.edu, bernd@berkeley.edu}

\author{Bernd Sturmfels}

\subjclass{Primary 15A18; Secondary 13P25, 14M12, 15A69}


\keywords{Tensors, eigenvectors, computational algebraic geometry}

\begin{abstract}
Square matrices represent linear self-maps of vector spaces,
and their eigenpoints are the fixed points of the induced map 
on projective space. Likewise, polynomial self-maps of a projective space
are represented by tensors. We study
the configuration of fixed points of a tensor or symmetric tensor.
\end{abstract}

\maketitle

\section{Introduction}

Square matrices $A$ with entries in a field $K$ represent linear maps
of vector spaces, say $K^n \rightarrow K^n$,
and hence linear maps
$\psi: \mathbb{P}^{n-1} \dashrightarrow \mathbb{P}^{n-1}$ of projective spaces over $K$.
If $A$ is nonsingular then $\psi$ is well-defined everywhere,  and the eigenvectors of~$A$
correspond to the fixed points of $\psi$.
The {\em eigenconfiguration} of $A$ consists of $n$ points in $\mathbb{P}^{n-1}$,
provided  $A$ is generic and $K$ is algebraically closed. Conversely, every
spanning configuration of $n$ points in $\mathbb{P}^{n-1}$ arises as the
eigenconfiguration of an $n \times n$-matrix $A$.
However, for special matrices $A$, we obtain multiplicities and
eigenspaces of higher dimensions \cite{AE}. 
Moreover, if $K = \mathbb{R}$
and $A$ is symmetric then its
complex eigenconfiguration consists of real points only.

This paper concerns the extension from linear to non-linear maps.
Their fixed points are the eigenvectors of tensors. The spectral
theory of tensors was pioneered by Lim \cite{Lim} and Qi \cite{Qi}. It is now a
much-studied topic in applied mathematics.

For instance, consider a quadratic map
$\psi : \mathbb{P}^{n-1} \dashrightarrow \mathbb{P}^{n-1}$,
with coordinates
\begin{equation}
\label{eq:quadraticmap}
  \psi_i (x_1,\ldots,x_n)  \,\, = \,\, \sum_{j=1}^n\sum_{k=1}^n a_{ijk} x_j x_k 
\qquad \hbox{for} \,\,i=1,\ldots,n. 
\end{equation}
One organizes the  coefficients of $\psi$ into a tensor $A = (a_{ijk})$
of format $n \times n \times n$.

In what follows, we assume that
$A = (a_{i_1 i_2 \cdots i_d}) $
is a $d$-dimensional tensor of format $n {\times} n {\times} \cdots {\times} n$.
The entries  $a_{i_1 i_2 \ldots i_d}$ lie in an algebraically closed field $K$
of characteristic zero, usually the complex numbers $K = \mathbb{C}$.
Such a tensor $A \in (K^n)^{\otimes d}$ defines polynomial maps $K^n \rightarrow K^n$  
and $\, \mathbb{P}^{n-1} \dashrightarrow \mathbb{P}^{n-1}\,$ 
just as in the formula (\ref{eq:quadraticmap}):
$$ \psi_i (x_1,\ldots,x_n) \,\,\, = \,\, 
\sum_{j_2=1}^n
\sum_{j_3=1}^n
\! \cdots \!
\sum_{j_d=1}^n
a_{i j_2 j_3 \cdots j_d} x_{j_2} x_{j_3} \cdots x_{j_{d}} 
\quad \hbox{for}\,\, i = 1,\ldots,n.  $$
Thus each of the $n$ coordinates of $\psi$ is a homogeneous polynomial $\psi_i$ of
degree $d-1$ in ${\bf x} = (x_1,x_2,\ldots,x_n)$. 
The eigenvectors of $A$ are the solutions of the constraint
\begin{equation}
\label{eq:rank1}
{\rm rank} \begin{pmatrix} x_1 & x_2 & \cdots & x_n \\
\psi_1({\bf x}) & \psi_2 ({\bf x})& \cdots & \psi_n({\bf x}) \end{pmatrix} \,\,\, \leq \,\,\, 1 .
\end{equation}
The {\em eigenconfiguration}  is the variety defined by
the $2 \times 2$-minors of this matrix. 
For a special tensor $A$,  the ideal
defined by (\ref{eq:rank1}) may not be radical, and in that
case we can study its {\em eigenscheme}.
Recent work in \cite{AE} develops this for $d=2$.

We note that every $n$-tuple $(\psi_1,\ldots,\psi_n)$ of  homogeneous
polynomials of degree $d-1$  in $n$ variables can be represented by
   some tensor $A$ as above.
This representation is not unique
unless we require that $A$ is symmetric in the
last $d-1$ indices.
Our maps
$\psi : \mathbb{P}^{n-1} \dashrightarrow \mathbb{P}^{n-1}$
are arbitrary polynomial dynamical 
system on projective space, in the sense of \cite{FS}.
Thus the study of eigenconfigurations of tensors
is equivalent to the study of fixed-point configurations of polynomial maps.

Of most interest to us are {\em symmetric tensors} $A$, i.e.~tensors whose
entries $a_{i_1 i_2 \cdots i_d}$ are invariant
under permuting the $d$ indices. These are in bijection with 
homogeneous polynomials $\phi = \sum a_{i_1 i_2 \cdots i_d} x_{i_1} x_{i_2} \cdots x_{i_d}$,
and we take $\psi_j = \partial \phi/\partial x_j$. 
The eigenvectors of a symmetric tensor correspond
to fixed points of the {\em gradient map}
$\nabla \phi : \mathbb{P}^{n-1} \dashrightarrow \mathbb{P}^{n-1}$, and
our object of study is the variety in $\mathbb{P}^{n-1}$ defined~by
\begin{equation}
\label{eq:rank1sym}
{\rm rank} \begin{pmatrix} x_1 & x_2 & \cdots & x_n \\
\partial \phi/\partial x_1 &
 \partial \phi/\partial x_2 &
 \cdots &
\partial \phi/\partial x_n &
\end{pmatrix} \,\,\, \leq \,\,\, 1 .
\end{equation}

This paper uses the term  {\em eigenpoint}
 instead of eigenvector to stress that we work in $\mathbb{P}^{n-1}$.
In our definition of eigenpoints we include the 
common zeros of $\psi_1,\ldots,\psi_n$. These are the points where
the map $\mathbb{P}^{n-1} \dashrightarrow \mathbb{P}^{n-1}$ is undefined.
For a symmetric tensor $\phi$, they are the singular points
of the hypersurface $\{\phi = 0\}$ in $\mathbb{P}^{n-1}$.
At those points the gradient $\nabla \phi$ vanishes so 
condition (\ref{eq:rank1sym}) holds.
 
\begin{example}
\label{ex:cremona}
 \rm Let $n=d=3$
and $\phi = x y z$.
The corresponding symmetric  $3 {\times} 3 {\times} 3$ tensor $A$
has six nonzero entries $1/6$ and the other $21 $ entries are $0$.
Here $\nabla \phi : \mathbb{P}^2 \dashrightarrow \mathbb{P}^2,
(x:y:z) \rightarrow (yz: xz : xy) $
is the classical {\em Cremona transformation}.
This map has four fixed points, namely
$(1:1:1)$, $(1:1:-1)$, $(1:-1:1)$ and $(-1:1:1)$. Also,
the cubic curve $\{\phi = 0\}$ has the three singular points
$(1:0:0), (0:1:0), (0:0:1)$. In total,
the tensor $A$ has 
seven eigenpoints in $\mathbb{P}^2$.
\end{example}

\smallskip

This paper is organized as follows.
In Section 2 we count the number of
eigenpoints, and we explore eigenconfigurations of
Fermat polynomials, plane arrangements, and binary forms.
Section 3 generalizes the fact that the left eigenvectors and 
right eigenvectors of a square matrix are distinct but compatible.
We explore this compatibility for the
$d$  eigenconfigurations of a $d$-dimensional  tensor with $n=2$.

Section 4 concerns the {\em eigendiscriminant}
 of the polynomial system (\ref{eq:rank1}) and 
 its variant in (\ref{eq:rank1ell}).
This is the irreducible polynomial in the $n^d$ unknowns
$a_{i_1 i_2 \cdots i_d}$ which vanishes when
two eigenpoints come together.
We give a  formula for its degree in terms of $n,d$ and $\ell$.
Section 5 takes first steps towards characterizing eigenconfigurations
among finite subsets of $\mathbb{P}^{n-1}$,
starting with the case $n=d=3$.

In Section 6 we focus on real tensors  and
 their dynamics on real projective space $\mathbb{P}^{n-1}_{\mathbb{R}}$.
 We examine whether all  complex eigenpoints can be real,
and we use line arrangements to give an affirmative answer for $n=3$.
The paper concludes with a brief discussion of attractors for the dynamical systems
$\,\psi: \mathbb{P}^{n-1}_{\mathbb{R}} \dashrightarrow \mathbb{P}^{n-1}_{\mathbb{R}}$.
These are also known as the
{\em robust eigenvectors} of the {\em tensor power method} \cite{AG,Rob}.

\section{The count and first examples}

In this section we assume that the given tensor $A$ is
generic, meaning that it lies in a certain dense open 
subset in the space $(K^{n})^{\otimes d}$ of all 
$n {\times} \cdots {\times} n$-tensors.  This set will be characterized
in Section~4 as the nonvanishing locus of the eigendiscriminant.

\begin{theorem}\label{thm:a}
The number of solutions in $\mathbb{P}^{n-1}$ of the system (\ref{eq:rank1}) equals
\begin{equation}
\label{eq:dustinnum}
 \qquad \frac{ (d-1)^n-1}{d-2}  \,\,\, = \,\,\,\,
\sum_{i=0}^{n-1} (d-1)^i. 
\end{equation}
The same count holds for eigenconfigurations of symmetric tensors, given by (\ref{eq:rank1sym}).
In the matrix case ($d=2$) we use the formula on the right, which evaluates to $n$.
\end{theorem}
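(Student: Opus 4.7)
The plan is to recognize the eigenpoints as the fixed points of the morphism $\psi : \mathbb{P}^{n-1} \to \mathbb{P}^{n-1}$ with coordinates $(\psi_1,\ldots,\psi_n)$, and to count them by intersection theory on $\mathbb{P}^{n-1}\times \mathbb{P}^{n-1}$. The first task is to check that for generic $A$ the map $\psi$ is defined everywhere. This amounts to showing that $n$ generic homogeneous polynomials of degree $d-1$ in $n$ variables have no common zero in $\mathbb{P}^{n-1}$. Dimensionally, $n$ generic hypersurfaces in an $(n-1)$-dimensional space are empty, and concretely one may verify this by exhibiting the diagonal choice $\psi_i = x_i^{d-1}$, whose only common zero in $\mathbb{A}^n$ is the origin; nonvanishing of the relevant resultant is an open condition.

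Once $\psi$ is known to be a morphism, the eigenpoints are precisely the points of $\Gamma_\psi \cap \Delta$ in $X := \mathbb{P}^{n-1} \times \mathbb{P}^{n-1}$, where $\Gamma_\psi$ is the graph and $\Delta$ the diagonal. Let $h_1, h_2$ denote the hyperplane classes pulled back from the two factors. The diagonal has the well-known class $[\Delta] = \sum_{i=0}^{n-1} h_1^i h_2^{n-1-i}$. For the graph one uses that $\psi$ pulls back the hyperplane class to $(d-1) h$: applying the projection formula to the embedding $\mathbf{x} \mapsto (\mathbf{x}, \psi(\mathbf{x}))$ yields $[\Gamma_\psi] = \sum_{i=0}^{n-1} (d-1)^i h_1^i h_2^{n-1-i}$. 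Multiplying these two classes and reading off the coefficient of $h_1^{n-1} h_2^{n-1}$ produces $\sum_{i=0}^{n-1}(d-1)^i$, in agreement with (\ref{eq:dustinnum}). Equivalently, this computation is the Lefschetz fixed-point formula on $\mathbb{P}^{n-1}$ applied to $\mathrm{tr}(\psi^* \mid H^{2i}(\mathbb{P}^{n-1})) = (d-1)^i$.

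The main obstacle is transversality: the above intersection number equals the honest count of distinct eigenpoints only when $\Gamma_\psi$ meets $\Delta$ transversally. By semicontinuity, it suffices to exhibit one tensor with that property. The Fermat polynomial $\phi = x_1^d + \cdots + x_n^d$ is a convenient choice: its gradient map is diagonal, and a point of $\mathbb{P}^{n-1}$ is an eigenpoint precisely when its nonzero coordinates share a common $(d-2)$-nd root of unity up to scale. A direct count gives $\sum_{k=1}^n \binom{n}{k}(d-2)^{k-1} = \frac{(d-1)^n - 1}{d-2}$ distinct points, and a Jacobian check at each confirms transversality. Since the Fermat tensor is symmetric, this simultaneously produces a transverse example in the (smaller) locus of symmetric tensors, settling both statements of the theorem by openness. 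The $d=2$ case reduces to the classical count of $n$ eigenvectors of a matrix, recovered from the right-hand formula.
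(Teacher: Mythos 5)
Your proof is correct, but it follows a genuinely different route from the paper's. The paper interprets the eigenscheme as a degeneracy locus of a $2\times n$ matrix of forms and applies the Thom--Porteous--Giambelli formula on $\mathbb{P}^{n-1}$ to get the length, then uses a vector-bundle argument (the sheaf $\widetilde Q(m)$ being globally generated) to conclude the scheme is reduced; you instead pass to $\mathbb{P}^{n-1}\times\mathbb{P}^{n-1}$ and compute $[\Gamma_\psi]\cdot[\Delta]$, which is the Lefschetz fixed-point computation and is essentially the Fornaess--Sibony proof that the paper cites from complex dynamics. Both approaches then handle the symmetric case by exhibiting the Fermat polynomial. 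Your method is clean and self-contained for $\ell=1$, but it is worth noting why the paper chose Thom--Porteous: that argument generalizes verbatim to the $\ell^{\rm th}$ eigenconfiguration of Theorem~\ref{thm:b}, whereas for $\ell\geq 2$ the defining equations are no longer the fixed-point locus of a self-map of $\mathbb{P}^{n-1}$, so the graph-meets-diagonal picture does not apply. One small point: the ``Jacobian check'' you invoke for transversality at the Fermat tensor is unproven, but it is also unnecessary — once you have shown that the Fermat eigenscheme is zero-dimensional with $\sum_{i=0}^{n-1}(d-1)^i$ distinct points, and this matches the intersection number, each point must carry multiplicity exactly one, so transversality follows from the count itself. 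I would delete the Jacobian remark and state this explicitly.
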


This result appeared in the tensor literature in
\cite{CS, OO},
 but it had already been known 
in complex dynamics  due to  Fornaess and Sibony 
  \cite[Corollary 3.2]{FS}.
  We shall present two proofs of Theorem \ref{thm:a},
   cast  in a slightly more general context.
  
For certain applications (e.g.~in spectral hypergraph theory \cite{LQY}), it makes sense to focus
on positive real numbers and to take the $\ell^\mathrm{th}$ root
after each iteration of the dynamical system $\psi$.
This leads to the following generalization of our equations:
\begin{equation}
\label{eq:rank1ell}
{\rm rank} \begin{pmatrix} x_1^\ell & x_2^\ell & \cdots & x_n^\ell \\
\psi_1({\bf x}) & \psi_2 ({\bf x})& \cdots & \psi_n({\bf x}) \end{pmatrix} \,\,\, \leq \,\,\, 1 .
\end{equation}
We refer to the solutions as the {\em $\ell^{\rm th}$ eigenpoints} of the given tensor $A$.
For $\ell=1$, this is the definition in the Introduction.
In the nomenclature devised
by Qi \cite{CQZ, Qi}, one obtains
{\em E-eigenvectors} for  $\ell=1$ and 
 {\em Z-eigenvectors} for $\ell=d-1$.
The subvariety of $\mathbb{P}^{n-1}$ defined by (\ref{eq:rank1ell}) is called the 
{\em $\ell^{\rm th}$ eigenconfiguration} of the tensor $A$.

\begin{theorem}
\label{thm:b}
The $\ell^{\rm th}$ eigenconfiguration 
of a generic tensor $A$ consists of
\begin{equation}
\label{eq:thomformula}
 \frac{(d-1)^n \,-\, \ell^n}{d-1\,\,-\,\,\ell} 
\,\,\, = \,\,\,\,
\sum_{i=0}^{n-1} (d-1)^i \ell^{n-1-i} 
\end{equation}
distinct points in $\mathbb{P}^{n-1}$.  
If $\ell = d - 1$ then the formula on the right is to be used.
\end{theorem}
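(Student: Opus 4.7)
The plan is to identify the $\ell^{\rm th}$ eigenconfiguration with the coincidence locus of two morphisms $\mathbb{P}^{n-1} \to \mathbb{P}^{n-1}$, and to evaluate the resulting intersection number against the class of the diagonal.

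First I would verify that for a generic tensor $A$ the forms $\psi_1,\dots,\psi_n$ of degree $d-1$ have no common zero on $\mathbb{P}^{n-1}$, so that $\psi = (\psi_1{:}\cdots{:}\psi_n)$ is an everywhere defined morphism with $\psi^* \mathcal{O}(1) = \mathcal{O}(d-1)$. The $\ell^{\rm th}$-power coordinate map $V_\ell \colon [x] \mapsto [x_1^\ell{:}\cdots{:}x_n^\ell]$ is likewise a morphism (its base locus in $\mathbb{P}^{n-1}$ is empty), with $V_\ell^* \mathcal{O}(1) = \mathcal{O}(\ell)$. The rank condition \eqref{eq:rank1ell} at $[x]$ is exactly the equation $\psi([x]) = V_\ell([x])$, so the $\ell^{\rm th}$ eigenconfiguration is the preimage of the diagonal $\Delta \subset \mathbb{P}^{n-1} \times \mathbb{P}^{n-1}$ under the morphism $(\psi,V_\ell)$.

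Next I would carry out the intersection-theoretic count. Writing $h_1,h_2$ for the hyperplane classes on the two factors, one has the standard formula $[\Delta] = \sum_{i=0}^{n-1} h_1^i h_2^{n-1-i}$. Since $(\psi,V_\ell)^* h_1 = (d-1) h$ and $(\psi,V_\ell)^* h_2 = \ell h$, pulling back and reading off the coefficient of $h^{n-1}$ yields the class $\bigl(\sum_{i=0}^{n-1}(d-1)^i \ell^{n-1-i}\bigr)\, h^{n-1}$ on $\mathbb{P}^{n-1}$, so the intersection number agrees with the right-hand side of \eqref{eq:thomformula}. For $d-1 \neq \ell$ this sum telescopes to $\bigl((d-1)^n - \ell^n\bigr)/(d-1-\ell)$, which explains the case distinction in the statement.

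The main obstacle is to upgrade this intersection number, which a priori counts points with multiplicity, to a count of \emph{distinct} eigenpoints. Since $V_\ell$ is fixed and only $\psi$ varies with $A$, one cannot directly invoke Kleiman's transversality theorem; instead I would argue that the space of degree $d-1$ maps $\psi$ is large enough that at each candidate coincidence point both its location and the linearization of $V_\ell - \psi$ can be perturbed independently, so that the Jacobian of the $2 \times 2$ minors of \eqref{eq:rank1ell} has maximal rank $n-1$ at every solution for $A$ in a dense open set. Alternatively, it would suffice to exhibit a single tensor $A_0$ for which \eqref{eq:rank1ell} has the expected number of simple solutions and then conclude by semicontinuity; this dovetails with the nonvanishing of the eigendiscriminant that is the subject of Section~4.
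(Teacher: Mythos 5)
Your proposal is correct and reaches the right count, but it travels a genuinely different route from the paper. For the numerical count, the paper applies the Thom--Porteous--Giambelli formula directly to the rank $\le 1$ degeneracy locus of the $2\times n$ matrix with row degrees $\ell$ and $d-1$, reading off the complete homogeneous symmetric polynomial $h_{n-1}(\ell,d-1)$. You instead observe that once $\psi$ is base-point free (true for generic $A$, since the $\psi_i$ can be any tuple of degree $d-1$ forms), the rank condition is the coincidence locus of the two morphisms $\psi$ and $V_\ell$, i.e.\ the preimage of the diagonal under $(\psi,V_\ell)$, and you pull back $[\Delta]=\sum_i h_1^i h_2^{n-1-i}$. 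These two computations are equivalent in this $2\times n$ case, but your framing is a bit more elementary and avoids quoting the degeneracy-locus formula. Where the approaches really diverge is in proving the scheme is reduced. The paper presents a clean vector-bundle argument: the quotient $\widetilde Q$ of $\mathcal{O}^{\oplus n}$ by the line subbundle generated by $(x_1^\ell,\dots,x_n^\ell)$ is locally free, the eigenscheme is the zero scheme of a generic global section of $\widetilde Q(d-1)$, and global generation of that bundle gives reducedness via Ein's Lemma~2.5. You instead sketch an incidence-variety / generic-smoothness argument: over each point $p\in\mathbb{P}^{n-1}$ the set of tensors having $p$ as an $\ell^{\rm th}$ eigenpoint is a linear space of codimension $n-1$, so the total incidence variety is smooth and irreducible, and generic smoothness of the projection to tensor space (in characteristic zero) yields reduced, hence distinct, fibers. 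That is a valid alternative and arguably more elementary; you should be aware, though, that your fallback suggestion of exhibiting a single tensor with simple solutions is explicitly flagged as problematic by the paper for $\ell\ge 2$ (the Fermat polynomial fails for $\ell=d-1$ since every point becomes an eigenpoint), so the incidence-variety argument, properly written out, is the one to use. The only weakness in your proposal is that the reducedness step is left as a sketch; both of your outlined paths can be completed, but neither is carried through in detail.
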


\begin{proof}
Consider the $2 \times n$-matrix  in (\ref{eq:rank1ell}).
Its rows are filled with homogeneous
polynomials in  $S=K[x_1, \dots, x_n]$
  of degrees $\ell$ and $m$ respectively, where the $\psi_i$ are generic.
Requiring this matrix to have rank $\leq 1$ defines
a subscheme of $\mathbb{P}^{n-1}$. By 
the Thom-Porteous-Giambelli formula \cite[\S 14.4]{Fu1},
this scheme is zero-dimensional, and its 
length is given by the complete homogeneous symmetric
polynomial of degree $n-1$ in the  row degrees, $\ell$ and $m$. 
This is precisely (\ref{eq:thomformula}) if we set $m = d-1$.

Another approach, which also shows that  the scheme is reduced,
is to use vector bundle techniques. 
Consider the $2 \times n$-matrix as a graded $S$-module homomorphism from $S(-\ell) \oplus S(-m)$ to $S^{\oplus n}$. The  quotient module $Q$ of $S^{\oplus n}$ by the submodule generated by the first row 
$(x_1^\ell,\ldots,x_n^\ell)$ is projective. In other words, the sheafification $\widetilde{Q}$ of $Q$ is locally free. The scheme associated with the $2 \times n$-matrix can therefore be thought of as the zero scheme of a generic global section of $\widetilde{Q}(m)$. Since $\widetilde{Q}(m)$ is globally generated, the scheme is reduced \cite[Lemma 2.5]{Ein}. 
\end{proof}

Here is a  brief remark about {\em eigenvalues}.
If ${\bf x} \in K^n $ is an $\ell^{\rm th}$ eigenvector of $A$ then there exists a scalar $\lambda \in K$
such that $\psi_i({\bf x}) = \lambda x_i^\ell $ for all $i$. We call
$({\bf x},\lambda)$ an {\em  eigenpair}. If this holds then 
$(\nu{\bf x}, \nu^{d-1-\ell} \lambda)$ is also an eigenpair for all
$\nu \in K \backslash \{0\}$. 
Such equivalence classes of eigenpairs correspond  to 
the $\ell^{\rm th}$ eigenpoints in $\mathbb{P}^{n-1}$. 
The case $\ell=d-1$ is special because
every eigenpoint has an associated eigenvalue.
If $\ell \not= d- 1$ then eigenpoints make sense
but eigenvalues are less meaningful.

\begin{proof}[Proof of Theorem \ref{thm:a}]
The first statement is the $\ell = 1$ case of Theorem~\ref{thm:b}.
For the second assertion, it suffices to exhibit
one symmetric tensor $\phi$ that has the correct number of  eigenpoints.
We do this for the {\em Fermat polynomial}
\begin{equation}
\label{eq:fermatpoly}
 \phi({\bf x}) \,\, = \,\, x_1^d + x_2^d + \cdots + x_n^d . 
\end{equation}
According to (\ref{eq:rank1}), the eigenconfiguration of $\phi$ is the
variety in $\mathbb{P}^{n-1}$ defined by
\begin{equation}
\label{eq:rank1symb}
{\rm rank} \begin{pmatrix} x_1 & x_2 & \cdots & x_n \\
 x_1^{d-1} & x_2^{d-1} &
  \cdots & x_n^{d-1}
\end{pmatrix} \,\,\, \leq \,\,\, 1 .
\end{equation}
We follow \cite{Rob} in characterizing
all solutions ${\bf x}$ in $\mathbb{P}^{n-1}$ to the binomial equations
$$ x_i x_j ( x_i^{d-2} - x_j^{d-2}) \,\,= \,\, 0 \qquad \hbox{for} \quad 1 \leq i < j \leq n . $$
For any non-empty subset $I \subseteq \{1,2,\ldots,n\}$, there are $ (d-2)^{|I|-1}$ 
solutions ${\bf x}$ with ${\rm supp}({\bf x}) = \{ i \ |\  x_i \not = 0\}$ equal to $I$.
Indeed,  we may assume $x_i = 1$ for the smallest index $i$ in $I$,
and the other values are arbitrary $(d-2)^\mathrm{nd}$ roots of unity. In total,
$$
\sum_I  (d-2)^{|I|-1}  \,= \,
\sum_{i=1}^n \! \binom{n}{i} (d-2)^{i-1} \,= \,
\frac{1}{d{-}2} \sum_{i=1}^n \! \binom{n}{i} (d-2)^{i} 1^{n-i}  \,= \,
\frac{(d{-}2 + 1)^n-1}{d{-}2}.
$$
This equals
(\ref{eq:dustinnum}). 
Here we assume $d\geq 3$.
The  familiar matrix case is $d=2$.
\end{proof}

\begin{example} \rm
Let $d=4$. For each $I$, there are
$2^{|I|-1}$ eigenpoints, with
$x_i = \pm 1$ for $i \in I$ and 
$x_j=0$ for $j \not\in I$. The total number 
of eigenpoints in $\mathbb{P}^{n-1}$ is
$(3^n-1)/2$.
\end{example}

We note that the argument in the proof of Theorem \ref{thm:a}
does not work for $\ell \geq 2$. For instance, if $\ell = d-1$
then every point in $\mathbb{P}^{n-1}$ is
an eigenpoint of  the Fermat polynomial.
At present we do not know an analogue to 
that polynomial for $\ell \geq 2$.

\begin{problem} Given any $\ell, d$ and $n$, exhibit explicit polynomials
$\phi({\bf x})$ of degree $d$ in $n$ variables such that 
(\ref{eq:rank1ell}) has  (\ref{eq:thomformula})
distinct isolated solutions in $\mathbb{P}^{n-1}$.
\end{problem}

We are looking for solutions with 
interesting combinatorial structure.
In Section~6 we shall examine the case when
$\phi({\bf x})$ factors into linear factors,
and we shall see how the geometry of
hyperplane arrangements can be used
to derive an answer.
A first instance was the Cremona map in Example \ref{ex:cremona}.
Here is a second example.

\begin{example} \label{ex:eightyfive}
 \rm
For $n=4$ the count 
of the eigenpoints 
in (\ref{eq:dustinnum}) gives $d^3-2d^2 +2d$.
We now fix $d=5$, so this number equals $85$.
Consider the special symmetric tensor
$$ \phi({\bf x}) \,\, = \,\, x_1 x_2 x_3 x_4 (x_1+x_2+x_3+x_4). $$
The surface defined by $\phi$ consists of five planes in $\mathbb{P}^3$.
These intersect pairwise in ten lines. Each point on such a line
is an eigenpoint because it is singular on the surface. Furthermore,
there are $15$ isolated eigenpoints; these have real coordinates:
\begin{equation}
\label{eq:15eigenpoints}
\begin{small} 
\begin{matrix}
(2:2:-1:-1),\,
(2:-1:2:-1),\,
(2:-1:-1:2),\,
(-1:2:2:-1), \\
(-1:2:-1:2),\,
(-1:-1:2:2), \,\,(1: 1: 1: 1),\,\,
\bigl(\frac{1}{2}(5\pm \sqrt{13}):1:1:1 \bigr) , \,\\
\bigl(1:\frac{1}{2}(5\pm \sqrt{13}):1:1 \bigr),\,
\bigl(1:1:\frac{1}{2}(5\pm \sqrt{13}):1 \bigr),\,
\bigl(1:1:1:\frac{1}{2}(5\pm \sqrt{13} ) \bigr).
\end{matrix} \qquad
\end{small}
\end{equation}
The five planes divide  $\mathbb{P}^3_{\mathbb{R}}$ into
$15$ regions. Each region contains one point in
(\ref{eq:15eigenpoints}).

Now, take a generic quintic $\phi'({\bf x})$ in $\mathbb{R}[x_1,x_2,x_3,x_4]$, and consider
the eigenconfiguration of $\phi({\bf x}) + \epsilon \phi'({\bf x})$. This consists of $85$ points
in $\mathbb{P}^3$.
These are algebraic functions of $\epsilon$. For $\epsilon > 0$ small, we find
$15$ real eigenpoints near (\ref{eq:15eigenpoints}).
The other $70$ eigenpoints arise from the $10$ lines.
How many are real depends on the choice of $\phi'$.
\end{example}

The situation is easier for $n=2$, when the tensor $A$
has format $2 {\times} 2 {\times} \cdots {\times} 2$. It
determines two binary forms $\psi_1$ and $\psi_2$.
The eigenpoints of $A$ are defined by
\begin{equation}
\label{eq:assbinoform}
  y \cdot \psi_1(x,y) - x  \cdot \psi_2(x,y) \, = \, 0 . 
\end{equation}
This is a binary form of degree $d$, so it has $d$ zeros in $\mathbb{P}^1$,
as predicted by (\ref{eq:dustinnum}).
Conversely, every binary form of degree $d$ can be written as
$y \psi_1 - x \psi_2$. This implies:

\begin{remark} \label{rem:everybinary}
Every set of $d$ points in $\mathbb{P}^1$ is the
eigenconfiguration of a  tensor.
\end{remark}

The discussion is more interesting when we restrict ourselves
to symmetric tensors. These correspond to 
binary forms $\phi(x,y)$ and their eigenpoints are defined~by
$$ y \cdot \frac{\partial \phi}{\partial x} \, - \,
x \cdot \frac{\partial \phi}{\partial y} \,\, = \,\, 0 . $$
The matrix case $(d=2)$ shows that Remark \ref{rem:everybinary} cannot hold as stated
for symmetric   tensors. Indeed, if 
$\,A = \begin{pmatrix} a \! &\!  b \\ b \! &\! c \end{pmatrix}\,$
and $\phi = a x^2  + 2 b xy + c y^2$ then
$ \frac{1}{2} (y  \frac{\partial \phi}{\partial x}  - 
x  \frac{\partial \phi}{\partial y}) = -b x^2 + (a-c) xy + b y^2 $.
This confirms the familiar facts that the two eigenpoints $(u_1:v_1)$
and $(u_2:v_2)$ 
are real when $a,b,c \in \mathbb{R}$ and they
satisfy $u_1 u_2 + v_1 v_2 = 0$.
 The following result generalizes the second fact from 
  symmetric matrices to tensors.
 
 \begin{theorem} \label{thm:noteverybinary}
 A set of $d$ points  $(u_i:v_i)$ in $\mathbb{P}^1$
 is the eigenconfiguration of a symmetric tensor if and only if
either $d$ is odd, or $d$ is even and the operator
 $$  \biggl(\frac{\partial^2}{\partial x^2}  + \frac{\partial^2}{\partial y^2}\biggr)^{\! d/2} 
 \,\,\, \hbox{annihilates the corresponding binary form} \quad  \prod_{i=1}^d (v_i x - u_i y).  $$
  \end{theorem}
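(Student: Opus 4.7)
The plan is to translate the statement into a question about the linear operator $T := y\partial_x - x\partial_y$ acting on the $(d+1)$-dimensional space $V_d$ of binary forms of degree $d$. A multiset of $d$ points $(u_i:v_i)$ arises as the eigenconfiguration of a symmetric tensor $\phi \in V_d$ exactly when the binary form $F := \prod_{i=1}^d (v_i x - u_i y)$ lies in the image of $T$; any missing nonzero scalar factor is immaterial because $\mathrm{Image}(T)$ is a linear subspace. So everything reduces to characterizing $\mathrm{Image}(T) \subseteq V_d$.

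First I would compute $\ker T$ by diagonalization. Introducing the complex coordinates $z = x + iy$ and $w = x - iy$, a direct rewrite gives $T = -i(z\partial_z - w\partial_w)$, which is already diagonal in the basis $\{z^j w^{d-j}\}_{j=0}^d$ with eigenvalues $-i(2j-d)$. Thus $T$ is invertible on $V_d$ when $d$ is odd, which immediately settles the first case of the theorem. When $d = 2k$ is even, $\ker T$ is spanned by $z^k w^k = (x^2+y^2)^{d/2}$, so $\mathrm{Image}(T)$ has codimension one in $V_d$.

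The remaining step is to identify this codimension-one subspace as $\ker \Delta^{d/2}$, where $\Delta = \partial_x^2 + \partial_y^2$. The key is that $\Delta$ commutes with $T$; I would verify this from the brackets $[\partial_x, T] = -\partial_y$ and $[\partial_y, T] = \partial_x$, which yield $[\Delta, T] = 0$. Since $\Delta^{d/2}$ maps $V_d$ into the space $V_0$ of constants and $T$ annihilates $V_0$, commutativity gives $\Delta^{d/2} \circ T = T \circ \Delta^{d/2} = 0$, so $\mathrm{Image}(T) \subseteq \ker \Delta^{d/2}$. A short calculation using $\Delta = 4\partial_z \partial_w$ shows $\Delta^{d/2}\bigl((x^2+y^2)^{d/2}\bigr) = 4^{d/2}\bigl((d/2)!\bigr)^2 \neq 0$, so $\Delta^{d/2}\colon V_d \to V_0$ is surjective and $\ker \Delta^{d/2}$ also has codimension one. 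The two subspaces therefore coincide, and the condition $F \in \mathrm{Image}(T)$ becomes exactly $\Delta^{d/2} F = 0$, which is the statement of the theorem.

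The main potential obstacle is the bookkeeping in the even case, in particular the commutation $[\Delta, T] = 0$ and the non-vanishing of $\Delta^{d/2}$ on $(x^2+y^2)^{d/2}$; both become essentially trivial in the complex coordinates $z, w$, so I do not expect any serious difficulty beyond this.
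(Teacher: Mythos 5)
Your proposal is correct and follows essentially the same strategy as the paper: reduce to describing $\mathrm{Image}(T)$ for $T = y\partial_x - x\partial_y$ on degree-$d$ forms, show $T$ is invertible for $d$ odd and has a one-dimensional kernel for $d$ even, and use the commutation $[\Delta,T]=0$ together with a codimension count to identify the image with $\ker\Delta^{d/2}$ in the even case. The one substantive addition is that you make explicit, via the change of coordinates $z=x+iy$, $w=x-iy$, the diagonalization $T=-i(z\partial_z-w\partial_w)$ (hence the eigenvalue/kernel claim that the paper simply asserts) and the computation $\Delta^{d/2}(x^2+y^2)^{d/2}=4^{d/2}((d/2)!)^2\neq 0$ that pins down the codimension of $\ker\Delta^{d/2}$; these are welcome details but do not change the route.
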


\begin{proof}
The only-if direction follows from the observation that
the Laplace operator $\frac{\partial^2}{\partial x^2}  + \frac{\partial^2}{\partial y^2}$
commutes with the vector field $y \frac{\partial}{\partial x} - x \frac{\partial}{\partial y}$.
Hence, for any $\phi$ of degree $d$, we obtain zero when
$\,\frac{\partial^2}{\partial x^2}  + \frac{\partial^2}{\partial y^2}\,$ gets applied
$d/2$ times to  $\,y \frac{\partial \phi}{\partial x} \, - \,
x  \frac{\partial \phi}{\partial y} $.

For the if direction, we examine the $(d+1) \times (d+1)$-matrix
that represents the endomorphism $\,\phi \mapsto 
y  \frac{\partial \phi}{\partial x} \, - \,x  \frac{\partial \phi}{\partial y}\,$
on the space of binary forms of degree $d$. This matrix  is
invertible when $d$ is odd, and its kernel is one-dimensional  when $d$ is even.
Hence the map is surjective when $d$ is odd, and it maps onto
a hyperplane when $d$ is even. The only-if part shows that this hyperplane equals
$\bigl(\,\bigl(\frac{\partial^2}{\partial x^2}  + \frac{\partial^2}{\partial y^2}\bigr)^{\! d/2}\,\bigr)^\perp$. 
\end{proof}

After completion of our manuscript we learned that Theorem  \ref{thm:noteverybinary}
was also found independently by Mauro Maccioni, as part of his PhD dissertation at Firenze, Italy.

\begin{example}[$d=4$] \label{ex:notsym} \rm
Four points
$(u_1{:}v_1),(u_2{:}v_2),(u_3{:}v_3),(u_4{:}v_4)$ on the line $\mathbb{P}^1$
arise as the eigenconfiguration of a symmetric
$2 {\times} 2 {\times} 2 {\times} 2$-tensor if and only if
$$ 3 u_1 u_2 u_3 u_4 + u_1 u_2 v_3 v_4 + u_1 u_3 v_2 v_4 + u_1 u_4 v_2 v_3 + \cdots 
+ u_3 u_4 v_1 v_2 + 3 v_1 v_2 v_3 v_4 = 0 .$$
This equation generalizes the orthogonality of the two eigenvectors
of a symmetric $2 {\times} 2$-matrix.
For instance, the columns of
$ U = \begin{small} \begin{pmatrix}
                                1  &  \! 0 &  \!  1 &    1 \\
                                0  &  \! 1 & \!  1  &  \!\!\! -1 \end{pmatrix} \end{small}$
represent the eigenconfiguration of a symmetric
$2 {\times} 2 {\times} 2 {\times} 2$-tensor, but this does not hold for
$ \begin{small} \begin{pmatrix}
1 & 1 \\
0 & 1 \end{pmatrix} U  
\end{small}$.
\end{example}

Example~\ref{ex:notsym} underscores the fact that the constraints on eigenconfigurations 
of symmetric tensors $A$
are not invariant under projective transformations.
They are only invariant under the orthogonal group $O(n)$,
like the Laplace operator in Theorem \ref{thm:noteverybinary}.
By contrast, the constraints on eigenconfigurations
of general (non-symmetric) tensors, such as Theorem
\ref{thm:nosixonconic}, will be 
properties of projective geometry.

We are familiar with this issue
from comparing the eigenconfigurations of
real symmetric matrices with those of
 general square matrices. These are respectively the  $O(n)$-orbit
and the $GL(n)$-orbit of the standard coordinate basis.

\section{Compatibility of eigenconfigurations}

When defining the eigenvectors of a tensor $A$, the symmetry was broken
by fixing the first index and summing over the last $d-1$ indices. There is
nothing special about the first index. For any $k \in \{1,\ldots, d\}$ we 
can regard $A$  as the self-map
$$\psi^{[k]} :\, \mathbb{P}^{n-1} \dashrightarrow \mathbb{P}^{n-1} $$
whose $i^\mathrm{th}$ coordinate is the following homogeneous polynomial of 
degree $d-1$ in~${\bf x}$:
$$ 
\psi^{[k]}_i({\bf x}) \,\, = \,\,
\sum_{j_1=1}^n
\! \cdots \!\!
\sum_{j_{k-1}=1}^n
\sum_{j_{k+1}=1}^n
\!\! \cdots \!
\sum_{j_d=1}^n
a_{j_1 \cdots j_{k-1} i j_{k+1} \cdots j_d} x_{j_1} \cdots x_{j_{k-1}} x_{j_{k+1}} \cdots x_{j_{d}} .
$$
Let ${\rm Eig}^{[k]}(A)$ denote the subvariety of 
$\mathbb{P}^{n-1}$ consisting of the fixed points of $\psi^{[k]}$.
For a generic tensor $A$, this is a finite set of points in $\mathbb{P}^{n-1}$ of cardinality 
$$ D \,\,=  \,\,\frac{ (d-1)^n-1}{d-2} \,\, = \,\, \# ({\rm Eig}^{[k]}(A))
\quad \hbox{for} \,\, d \geq 3 .$$

This raises the following question: Suppose we are given
$d$ configurations, each consisting of $D$ points in $\mathbb{P}^{n-1}$,
and known to be the eigenconfiguration of some tensor.
Under what condition
do they come from the same tensor $A$?

We begin to address this question by considering the case
of matrices $(d=2)$, where $D = n$. Our question is as follows:
given an $n \times n$-matrix $A$, what is the relationship between
the left eigenvectors and the right eigenvectors of $A$?

\begin{proposition}
\label{prop:matrix_compat}
Let $\{{\bf v}_1,{\bf v}_2,\ldots,{\bf v}_n\}$ and
$\{{\bf w}_1,{\bf w}_2,\ldots,{\bf w}_n\}$ be two spanning subsets of  $\mathbb{P}^{n-1}$.
These arise as the left and right eigenconfigurations of some $n \times n$-matrix $A$ if and only if, up to relabeling, the dot products of vectors corresponding to $\,{\bf w}_i$ and ${\bf v}_j\,$ 
are zero whenever $i \not= j$.
\end{proposition}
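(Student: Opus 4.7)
My plan is to reduce both directions to the classical fact that, when an $n \times n$ matrix $A$ has simple spectrum, its right eigenvectors ${\bf v}_j$ and left eigenvectors ${\bf w}_i$ form biorthogonal bases after pairing by eigenvalue, and conversely every such biorthogonal pair of bases arises from an $A$ constructed by a prescribed conjugation.

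For the only-if direction, I would first note that a spanning configuration of exactly $n$ points in $\mathbb{P}^{n-1}$ can be the right eigenconfiguration of $A$ only if each eigenspace of $A$ is one-dimensional and $A$ has $n$ distinct eigenvalues $\lambda_1,\ldots,\lambda_n$ (otherwise the eigenconfiguration would be either of positive dimension or span a proper subspace). Writing $\mu_i$ for the left eigenvalue attached to ${\bf w}_i$, the chain of identities
$$\lambda_j \, {\bf w}_i^T {\bf v}_j \,\,=\,\, {\bf w}_i^T A \, {\bf v}_j \,\,=\,\, \mu_i \, {\bf w}_i^T {\bf v}_j$$
holds for all $i,j$. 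Since left and right spectra agree as the roots of the characteristic polynomial, I may relabel the ${\bf w}_i$ so that $\mu_i = \lambda_i$, and then ${\bf w}_i \cdot {\bf v}_j = 0$ whenever $i \ne j$.

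For the if-direction, I would lift the ${\bf v}_j$ to $K^n$, form the invertible matrix $P = [{\bf v}_1 \mid \cdots \mid {\bf v}_n]$, fix any $n$ distinct scalars $\lambda_1,\ldots,\lambda_n$, and set $A := P\, {\rm diag}(\lambda_1,\ldots,\lambda_n)\, P^{-1}$. By construction the right eigenpoints of $A$ are the $[{\bf v}_j]$ and its left eigenpoints are the projectivized rows of $P^{-1}$, namely the dual basis $\{{\bf v}_i^*\}$ characterized by ${\bf v}_i^* \cdot {\bf v}_k = \delta_{ik}$. The hypothesis ${\bf w}_i \cdot {\bf v}_j = 0$ for all $j \ne i$ forces ${\bf w}_i$ into the one-dimensional orthogonal complement of ${\rm span}\{{\bf v}_j : j \ne i\}$, which is precisely the line through ${\bf v}_i^*$; hence $[{\bf w}_i] = [{\bf v}_i^*]$ in $\mathbb{P}^{n-1}$ for every $i$.

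The only small point in the if-direction is checking that ${\bf w}_i$ does not collapse to the zero vector along that line, equivalently that ${\bf w}_i \cdot {\bf v}_i \ne 0$. Were it zero, ${\bf w}_i$ would be orthogonal to the entire basis $\{{\bf v}_k\}$ and therefore vanish, contradicting ${\bf w}_i$ being a well-defined projective point. I do not anticipate any real obstacle beyond this routine check; the proposition reduces to elementary linear algebra once the biorthogonal structure between left and right eigenvectors has been identified.
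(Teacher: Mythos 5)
Your proof is correct and rests on the same core observation as the paper's one-line argument, namely that the left eigenvectors of $A$ are the dual basis $(V^{-1})^T$ to the right eigenvectors $V$, hence biorthogonal. You spell out both directions more carefully than the paper does (in particular the reduction to simple spectrum in the only-if direction and the explicit construction $A = P\,\mathrm{diag}(\lambda_i)\,P^{-1}$ in the if direction), but the underlying idea is identical.
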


\begin{proof} Let $V$ be a square matrix whose columns are the eigenvectors of $A$. 
Then the columns of $(V^{-1})^T$ form a basis of eigenvectors for $A^T$.
\end{proof}

The condition in Proposition \ref{prop:matrix_compat}
defines an irreducible variety, denoted ${\rm EC}_{n,2}$ and called
the {\em eigencompatibility variety}
for $n \times n$-matrices. It lives naturally in the space of
pairs of unordered configurations of $n$ points in $\mathbb{P}^{n-1}$.
In symbols,
\begin{equation}
\label{eq:ECn2}
 {\rm EC}_{n,2} \,\subset \, {\rm Sym}_n(\mathbb{P}^{n-1}) \times
{\rm Sym}_n(\mathbb{P}^{n-1}) .
\end{equation}
It has  middle dimension $n(n-1) $,
and it maps birationally onto either factor.
We may identify ${\rm Sym}_n(\mathbb{P}^{n-1})$ 
with the Chow variety of 
products of $n$ linear forms in $n$ variables.
Here, each configuration
$\{{\bf v}_1,{\bf v}_2,\ldots,{\bf v}_n\}$
is represented by $\prod_{i=1}^n ( {\bf v}_i \cdot {\bf x})$.
The coefficients of this homogeneous 
polynomial serve as
coordinates on ${\rm Sym}_n(\mathbb{P}^{n-1})$.
It would be worthwhile to express Proposition~\ref{prop:matrix_compat} 
in these coordinates.

\begin{example}[$n=2$] \label{ex:EC22} \rm
The eigencompatibility variety ${\rm EC}_{2,2}$
for $2 \times 2$-matrices is a surface in
$\,\bigl({\rm Sym}_2(\mathbb{P}^1) \bigr)^2$.
This ambient space equals $(\mathbb{P}^2)^2$,
by representing a pair of unlabeled points on the line
$\mathbb{P}^1$ with the binary quadric that defines it.
To be precise, a point $\bigl((u_0{:}u_1{:}u_2),(v_0{:}v_1{:}v_2) \bigr)$
in $(\mathbb{P}^2)^2$ is identified with the binary forms
$$ f(s,t) \,=\,u_0 s^2 + u_1 st + u_2 t^2
\quad \hbox{and} \quad g(s,t) \,=\,v_0 s^2 + v_1 st + v_2 t^2 . $$
We want the zeros of $f(s,t)$ 
and $g(s,t)$ to be the right and left eigenconfigurations
of the same $2 \times 2$-matrix.
Proposition \ref{prop:matrix_compat} tells us that this is equivalent to 
$$ f(s,t) \,= \, \lambda (as+bt)(cs+dt) 
\quad \hbox{and} \quad g(s,t) \,=\,\,\mu (bs-at)(ds-ct). $$
By eliminating the parameters $a,b,c,d,\lambda$, and $\mu$, we find that
the surface ${\rm EC}_{2,2}$ is essentially the diagonal
in $(\mathbb{P}^2)^2$. It is defined by the determinantal condition
\begin{equation}
\label{eq:mat}
 {\rm rank}
\left(
\begin{array}{rrr}
u_0 & \phantom{-}u_1 & u_2 \\
v_2 & - v_1 & v_0
\end{array}
\right) \,\, \leq \,\, 1.
\end{equation}
Our aim in this section is to generalize this 
implicit representation of ${\rm EC}_{2,2}$.
\end{example}
\smallskip

Let ${\rm EC}_{n,d}$ denote the eigencompatibility variety of
$d$-dimensional tensors of format $n {\times} n {\times} \cdots {\times} n$.
This is defined as follows. Every generic tensor $A$ has
$d$ eigenconfigurations. 
The {\em eigenconfiguration with index $k$} of the tensor $A$
is the fixed locus of the map  $\psi^{[k]}$. Each configuration is a set of unlabeled $D$ points in $\mathbb{P}^{n-1}$, which we regard as a point in ${\rm Sym}_D(\mathbb{P}^{n-1})$.
The $d$-tuples of eigenconfigurations, one for each index $k$, parametrize
\begin{equation}
\label{eq:dtuples}
{\rm EC}_{n,d} \,\subset \,\bigl({\rm Sym}_D(\mathbb{P}^{n-1})\bigr)^d. 
\end{equation}
Thus ${\rm EC}_{n,d}$ is the closure of the locus of 
$d$-tuples of eigenconfigurations of tensors. 

Already the case of binary tensors $(n=2)$ is quite interesting.
We shall summarize what we know about this. 
Let $A$ be a tensor of format $2 \times 2 \times \cdots \times 2 $,
with $d$ factors. Each of its $d$ eigenconfigurations consists of
$D = d$ points on the line $\mathbb{P}^1$. The symmetric power
${\rm Sym}_d(\mathbb{P}^1)$ is identified with the
$\mathbb{P}^d$ of binary forms of degree $d$. The zeros
of such a binary form is an unlabeled configuration of $d$ points in $\mathbb{P}^1$.
Thus, the eigencompatibility variety for binary tensors is
a subvariety 
$$ {\rm EC}_{2,d} \,\,\subset \,\, (\mathbb{P}^d)^d. $$
The case $d=2$ was described in Example \ref{ex:EC22}.
Here are the next few cases.

\begin{example}[$d=3$] \label{ex:EC23}  \rm
Points in $(\mathbb{P}^3)^3$ are triples of binary cubics
$$
\begin{matrix}
f(s,t) & = & u_0 s^3 + u_1 s^2 t+ u_2 s t^2 + u_3 t^3 ,\\
g(s,t) & = & v_0 s^3 + v_1 s^2 t+ v_2 s t^2 + v_3 t^3 ,\\
h(s,t) & = & w_0 s^3 + w_1 s^2 t+ w_2 s t^2 + w_3 t^3 ,
\end{matrix}
$$
where two binary cubics are identified if they differ by a
scalar multiple. The three
eigenconfigurations of a $2 {\times} 2 {\times} 2$-tensor
$A = (a_{ijk})$ are defined by the binary cubics
$$
\begin{matrix}
f(s,t) & \! = &  \lambda \cdot \bigl(a_{211} s^3-(a_{111}-a_{212}-a_{221}) s^2t 
+(a_{222}-a_{112}-a_{121})st^2-a_{122}t^3 \bigr) \\
g(s,t) & \! = &  \mu \cdot \bigl( a_{121} s^3-(a_{111}-a_{122}-a_{221}) s^2 t
+ (a_{222} -a_{112} -a_{211}) st^2 - a_{212}t^3  \bigr) \\
h(s,t) & \! = & \nu \cdot \bigl( a_{112} s^3 -(a_{111}-a_{122}-a_{212}) s^2t
+( a_{222} - a_{121}-a_{211}) st^2 - a_{221} t^3 \bigr )
\end{matrix}
$$
Our task is to eliminate the $11$ parameters $a_{ijk}$ and $\lambda,\mu,\nu$
from these formulas. Geometrically, our variety ${\rm EC}_{2,3}$ is
represented as the image of a rational map
\begin{equation}
\label{eq:visionmap}
 \mathbb{P}^7 \dashrightarrow (\mathbb{P}^3)^3 \,,\,\,A \mapsto (f,g,h) .
 \end{equation}
 This is linear in the coefficients $a_{ijk}$ of $A$ and maps 
 the tensor to a triple of binary forms. 
 To characterize the image of (\ref{eq:visionmap}),
in  Theorem \ref{prop:EC2d} we introduce the matrix
\begin{equation}
\label{eq:3by4uvw}
{\bf E}_3 \,\, = \,\,
\left(
\begin{array}{cccc}
 u_1-u_3 & u_1-u_3 & u_0-u_2 & u_0-u_2 \\
  v_1-v_3 & 0 &   v_0-v_2 &   0 \\
    0 & w_1-w_3 & 0 & w_0-w_2 
    \end{array}\right).
     \end{equation}
Let $I$ be the ideal generated by the $3 \times 3$-minors of ${\bf E}_3$.
 Its zero set has  the eigencompatibility variety 
${\rm EC}_{2,3}$ as an irreducible component. 
There are also three extraneous irreducible components, given by the rows of the matrix:
\[
I_1 = \langle u_0-u_2, u_1-u_3 \rangle, \ I_2 = \langle v_0-v_2, v_1-v_3 \rangle, \ \mbox{and} \ I_3 = \langle w_0-w_2, w_1-w_3 \rangle. 
\] 
The homogeneous prime ideal of ${\rm EC}_{2,3}$ is found to be the ideal quotient 
\begin{equation}
\label{eq:3by4uvwreduced}
(I:I_1 I_2 I_3) \quad = \quad 
\biggl\langle \hbox{$2 \times 2$-minors of} \,\,
\begin{pmatrix}
   u_0-u_2 & v_0-v_2 & w_0-w_2 \\
   u_1-u_3 & v_1-v_3 & w_1-w_3
\end{pmatrix}
\biggr\rangle.
\end{equation}
We conclude that
the eigencompatibility variety ${\rm EC}_{2,3}$ has codimension $2$ in $(\mathbb{P}^3)^3$.
\end{example}

\begin{example}[$d=4$] \label{ex:EC24}  \rm
Points in $(\mathbb{P}^4)^4$ are quadruples of binary quartics
$$
\begin{matrix}
u_0 s^4 + u_1 s^3 t+ u_2 s^2 t^2 + u_3 s t^3 + u_4 t^4 ,\\
 v_0 s^4 + v_1 s^3 t+ v_2 s^2 t^2 + v_3 s t^3  + v_4 t^4 ,\\
 w_0 s^4 + w_1 s^3 t+ w_2 s^2 t^2 + w_3 s t^3 + w_4 t^4 , \\
  x_0 s^4 + x_1 s^3 t+ x_2 s^2 t^2 + x_3 s t^3 + x_4 t^4 . \\
\end{matrix}
$$
One can 
represent the homogeneous ideal of 
the eigencompatibility variety $\mathrm{EC}_{2,4}$ in a similar way to Example~\ref{ex:EC23}. 
Let $I$ be the ideal generated by the $4 \times 4$-minors~of
 \begin{equation}
 \label{eq:4by8uvwx}
 \begin{small}
\begin{bmatrix}
 u_1{-}u_3 & \!\! u_1{-}u_3 & \!\! u_1{-}u_3 & \!\!\! u_2{-}u_0{+}u_4 & \! \!\! u_2{-}u_0{-}u_4 &
  \!\! \!2u_0{-}u_2{+}2u_4 \! & \!\! 3u_0{-}2u_2{+}3u_4 \\
 v_3{-}v_1 &  0  &    0  &  \! \!\! v_0{-}v_2{+}v_4  & 0  &       v_2   &      v_2          \\
      0     &  \! \!\! w_3{-}w_1  & 0  &     0   &    \! \!\! w_0{-}w_2{+}w_4   &  w_2   &      w_2          \\
      0     &  0   &   \! x_3{-}x_1  & 0   &      0   &      x_0+x_4   &   x_2          \\
\end{bmatrix}
\end{small} \!\!\!
\end{equation}
Let $I_{ij}$ be the ideal generated by the $2 \times 2$-minors of the $2 \times 7$-submatrix consisting of the $i^\mathrm{th}$ and $j^\mathrm{th}$ rows in (\ref{eq:4by8uvwx}).
 The homogeneous prime ideal of $\mathrm{EC}_{2,4} \subset (\mathbb{P}^4)^4$ is obtained as the ideal quotient
  $\bigl(I:I_{12}I_{13}I_{14}I_{23}I_{24}I_{34} \bigr)$.
  We obtain ${\rm dim}({\rm EC}_{2,4}) = 12$.
\end{example}

\begin{example}[$d=5$] \label{ex:EC25}  \rm
The eigencompatibility variety ${\rm EC}_{2,5}$ has codimension $4$ in
 $(\mathbb{P}^5)^5$, so ${\rm dim}({\rm EC}_{2,5}) = 21$.
 We represent this variety by the $5 \times 8$-matrix
 \begin{equation}
 \label{eq:5by8uvwxy}
\begin{small}
\begin{bmatrix}
  -u_1+u_3-u_5 & v_1-v_3+v_5 &        0  &    0  &  0 \\
  u_1-u_3+u_5 & 0  &  w_1-w_3+w_5 &    0 & 0 \\
   -u_1+u_3-u_5 &  0  &  0 & x_1-x_3+x_5 & 0 \\
   -u_1+u_3-u_5 &  0  &  0 &  0  & y_1-y_3+y_5 \\
     u_0-u_2+u_4 &  0  & w_0-w_2+w_4 & 0 & 0 \\
        0 & v_0-v_2+v_4 & w_0-w_2+w_4 &  0 & 0 \\
        0  &  0  & w_0-w_2+w_4 & x_0-x_2+x_4 & 0 \\
        0 &  0   & w_0-w_2+w_4 &  0  & y_0-y_2+y_4
        \end{bmatrix} ^{\! T}
         \end{small}
\end{equation}
As before, the variety of maximal minors of this
$5 \times 8$-matrix
has multiple components.
Our variety ${\rm EC}_{2,5}$ is the main component,
obtained by taking the ideal quotient by
determinantal ideals that are given by proper
subsets of the rows.
\end{example}

In what follows we derive a general result
for binary tensors. This will explain the origin
of the matrices  (\ref{eq:3by4uvw}), (\ref{eq:4by8uvwx})
and (\ref{eq:5by8uvwxy}) that were used to represent ${\rm EC}_{2,d}$.

Fix $V = K^n$. 
Tensors $A$ live in the space $V^{\otimes d}$.
For each $k$, the map $A \mapsto \psi^{[k]}$
factors through the linear map
that symmetrizes the factors indexed by $[d] \backslash \{k\}$:
\begin{equation}
\label{eq:firstmap}
 \, V^{\otimes d} \,\longrightarrow \, 
\mathrm{Sym}_{d-1}(V) \otimes V, 
\end{equation}
where  $\{e_1, \dots, e_n\}$ is a basis for $V$. 
Taking the wedge product with $(x_1,x_2,\ldots,x_n)$
defines a further linear map
\begin{equation}
\label{eq:secondmap} 
 \begin{CD}  \mathrm{Sym}_{d-1}(V) \otimes V
  @>>>   \mathrm{Sym}_d(V) \otimes \bigwedge^2 V \qquad \\
  \inrot & & \inrot \\
  \sum_{i=1}^n \psi_i \otimes e_i & \longmapsto & \sum_{1 \leq i < j \leq n} 
(  \psi_i x_j-\psi_j x_i) \otimes (e_i  \wedge e_j).
\end{CD}
 \end{equation}
 Write $ \ell^{[k]}$ for the composition of
  (\ref{eq:secondmap}) after (\ref{eq:firstmap}).
Thus $\ell^{[k]}(A)$ is a vector
of length $\binom{n}{2}$ whose entries
are  polynomials of degree $d$ that define
the  eigenconfiguration with index $k$.
For instance, in Example \ref{ex:EC23},
$f = \ell^{[1]}(A)$,
$g = \ell^{[2]}(A)$,
$h = \ell^{[3]}(A)$.

The kernel of $\ell^{[k]}$ consists of all tensors whose
eigenconfiguration with index $k$ is all of $\mathbb{P}^{n-1}$.
We are interested in the space of tensors where this happens
simultaneously for all indices $k$:
\begin{equation}
\label{eq:alltrivial}
K_{n,d} \,\, = \,\, \bigcap_{k=1}^d {\rm kernel}(\ell^{[k]}).
\end{equation}
The tensors in $K_{n,d}$ can be regarded as being trivial as far
as eigenvectors are concerned. For instance, 
in the classical matrix case $(d=2)$, we have
$$ K_{n,2} \,\,  = \,\, {\rm kernel}(\ell^{[1]}) \,\, = \,\,
{\rm kernel}(\ell^{[2]}) , $$
and this is the  $1$-dimensional space spanned by the identity matrix.

In what follows we restrict our attention to binary tensors ($n=2$).
We regard $\ell^{[k]}$ as a linear map $\mathbb{P}^{2^d-1} \dashrightarrow \mathbb{P}^d$.
The eigencompatibility variety ${\rm EC}_{2,d}$ is the closure of the image of the map
$\mathbb{P}^{2^d-1} \dashrightarrow (\mathbb{P}^d)^d$ given by the tuple 
$(\ell^{[1]},\ldots,\ell^{[d]})$. Let ${\bf u}^{[k]}$ be a column vector of unknowns
representing points in the $k^{\rm th}$ factor $\mathbb{P}^d$.

\begin{theorem} \label{prop:EC2d}
There exists a $d \times e$-matrix ${\bf E}_d$ with $e = \dim (K_{2,d})+d(d+1)-2^d$, 
whose entries in the $k^\mathrm{th}$ row 
are $\mathbb{Z}$-linear forms in ${\bf u}^{[k]}$, such that 
${\rm EC}_{2,d}$ is an irreducible component in the variety 
defined by the $d \times d$-minors of ${\bf E}_d$. Its ideal is
obtained from those $d \times d$-minors by taking  the 
ideal quotient (or saturation) with respect to the
maximal minor ideals of proper subsets of the rows of ${\bf E}_d$.
\end{theorem}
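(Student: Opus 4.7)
The plan is to realize $\mathbf{E}_d$ as the matrix of linear conditions dual to the image of the combined evaluation map. For $n=2$ the space $\mathrm{Sym}_d V \otimes \bigwedge^2 V$ is $(d{+}1)$-dimensional, so $L := \bigoplus_{k=1}^d \ell^{[k]}$ is a linear map $V^{\otimes d} \to (K^{d+1})^d$ with kernel $K_{2,d}$ and image $W$ of dimension $2^d - \dim K_{2,d}$. The annihilator $W^\perp \subset ((K^{d+1})^*)^d$ therefore has dimension $e = d(d+1) - 2^d + \dim K_{2,d}$, matching the asserted number of columns. I would fix a $\mathbb{Z}$-basis $\{(\mathbf{f}_j^{[1]},\ldots,\mathbf{f}_j^{[d]})\}_{j=1}^e$ of $W^\perp$ (available because each $\ell^{[k]}$ is defined over $\mathbb{Z}$) and declare $\mathbf{E}_d$ to be the $d\times e$ matrix with $(k,j)$-entry $\mathbf{f}_j^{[k]}(\mathbf{u}^{[k]})$. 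By construction each row consists of $\mathbb{Z}$-linear forms in $\mathbf{u}^{[k]}$ alone.

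The key equivalence is that $(\mathbf{u}^{[k]}) \in \mathrm{EC}_{2,d}$ if and only if there exist scalars $\lambda_1,\ldots,\lambda_d$, not all zero, with $(\lambda_k \mathbf{u}^{[k]})_k \in W$; pairing against the $\mathbf{f}_j$'s rewrites this as the single matrix equation $\lambda^\top \mathbf{E}_d(\mathbf{u}) = 0$, so the existence of a nonzero $\lambda$ is equivalent to the rows of $\mathbf{E}_d$ being linearly dependent, i.e.\ to the vanishing of every $d\times d$ minor. This yields the set-theoretic containment $\mathrm{EC}_{2,d} \subseteq V(\text{$d{\times}d$ minors of }\mathbf{E}_d)$. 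Irreducibility of $\mathrm{EC}_{2,d}$ is automatic, as the closure of the image of the irreducible $\mathbb{P}^{2^d-1}$ under the parametrization $A \mapsto (\ell^{[1]}(A),\ldots,\ell^{[d]}(A))$. A fiber dimension count, using that for generic $A$ the only rescalings $c \in K^d$ with $(c_k \ell^{[k]}(A))_k \in W$ are scalar multiples of $(1,\ldots,1)$, gives $\dim \mathrm{EC}_{2,d} = 2^d - 1 - \dim K_{2,d}$ and hence codimension $e - d + 1$ in $(\mathbb{P}^d)^d$. This is exactly the expected codimension of the rank-$\le\!d{-}1$ locus of a $d\times e$ matrix, so $\mathrm{EC}_{2,d}$ is a full-dimensional component of the minor variety.

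Finally, I would identify the extraneous components and justify the saturation statement. Whenever the submatrix of $\mathbf{E}_d$ indexed by a proper subset $S \subsetneq \{1,\ldots,d\}$ of rows has rank strictly less than $|S|$, the full matrix automatically has rank less than $d$; conversely, any rank drop not caused by a genuine compatibility arises in this way. The extraneous components are therefore cut out by the maximal minors of the row submatrices, and saturating the $d \times d$ minor ideal by the product of these determinantal ideals removes them at the set-theoretic level. The main obstacle I expect is promoting this to the ideal-theoretic claim: one must show that the primary decomposition of the $d \times d$ minor ideal consists only of the prime $\mathcal{I}(\mathrm{EC}_{2,d})$ and the subrow extraneous primes (each with multiplicity one), so that the saturation actually returns the prime ideal rather than a strictly larger one. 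I would attack this by deforming to a generic $d \times e$ matrix, whose maximal minor ideal is perfect, Cohen--Macaulay, and radical by the Eagon--Northcott complex, and by verifying that the structured specialization to $\mathbf{E}_d$ preserves radicality and introduces no embedded components; Examples~\ref{ex:EC23}--\ref{ex:EC25} furnish independent computer-algebra checks in the first three nontrivial cases.
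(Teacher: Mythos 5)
Your construction of ${\bf E}_d$ via the annihilator $W^\perp$ of the image $W$ of $L = \bigoplus_k \ell^{[k]}$ is the dual reformulation of what the paper does: the paper stacks the $\ell^{[k]}$ into the $d(d+1)\times(2^d+d)$ matrix (3.16), row-reduces to eliminate the tensor coordinates, and reads off ${\bf E}_d$ as the transpose of the lower-right block $C$. These produce the same matrix, and up to that point your argument is sound and matches the paper's.

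There are, however, two genuine problems. First, your ``key equivalence'' is stated incorrectly: $(\mathbf{u}^{[k]})_k$ lies in the (open, dense) part of ${\rm EC}_{2,d}$ parametrized by tensors if and only if there exist $\lambda_1,\ldots,\lambda_d$ \emph{all nonzero} with $(\lambda_k \mathbf{u}^{[k]})_k \in W$, not merely ``not all zero.'' The ``not all zero'' condition is exactly the vanishing of the $d\times d$ minors; the all-nonzero requirement is precisely what distinguishes ${\rm EC}_{2,d}$ from the extraneous components, and eliding the distinction undercuts the logic even though you recover the right picture when you discuss the row submatrices later.

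Second, and more seriously, your fiber dimension count asserts that for generic $A$ the only $c\in K^d$ with $(c_k\ell^{[k]}(A))_k\in W$ are multiples of $(1,\ldots,1)$. This claim is not obvious and you do not attempt to justify it; in fact, combined with the paper's lower bound (3.10) it is \emph{equivalent} to Conjecture 3.9, which the paper states as open (the dimension has only been verified computationally for $d\le 5$). Consequently the step where you deduce that ${\rm EC}_{2,d}$ has the expected codimension of the determinantal locus, and hence is a full-dimensional component, rests on an unproved hypothesis. You should either flag the generic-rescaling claim as a lemma requiring proof, or find an argument for ``${\rm EC}_{2,d}$ is an irreducible component'' that does not pass through the exact dimension --- for instance by exhibiting, for each proper $S\subsetneq\{1,\ldots,d\}$, a tensor $A$ whose image in $(\mathbb P^d)^d$ avoids the extraneous locus attached to $S$. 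Finally, be aware that the paper's own proof stops at the construction of ${\bf E}_d$ and the set-theoretic characterization of ${\rm EC}_{2,d}$; the irreducible-component and saturation assertions are left unjustified in the text (supported by the computations in Examples 3.6--3.8), so the Eagon--Northcott/deformation program you sketch for the radicality of the saturated ideal is attempting strictly more than the paper actually proves.
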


\begin{proof}
We shall derive this using the linear algebra method in \cite[\S 2]{AST}.
We express $\ell^{[k]}$ as a $(d{+}1) \times 2^d$-matrix,
 and we form the $d(d+1) \times (2^d+d)$-matrix
\begin{equation}
\label{eq:aholtmatrix}
\begin{pmatrix}
\, \ell^{[1]} & {\bf u}^{[1]} & 0 & \cdots & 0 \, \\
\, \ell^{[2]} & 0 & {\bf u}^{[2]} & \cdots & 0  \, \\
\, \vdots & \vdots & \vdots & \ddots & \vdots \,\\
\, \ell^{[d]} & 0 & 0 & \cdots & {\bf u}^{[d]}  \,
\end{pmatrix} .
\end{equation}
The left $d(d+1) \times 2^d$-submatrix has entries in $\{-1,0,+1\}$ and its
kernel is  $K_{2,d}$.
The rank of that submatrix is $r = 2^d-{\rm dim}(K_{2,d})$.
Using row operations, we can transform (\ref{eq:aholtmatrix})
into a matrix $
\begin{pmatrix} A & B \\
0 & C \end{pmatrix}$ where $A$ is an $r\times 2^d$ matrix of rank $r$,
and $C$ is an $e \times d$-matrix whose $k^{\rm th}$ column has
linear entries in the coordinates of ${\bf u}^{[k]}$.

The variety ${\rm EC}_{2,d}$ is the set of all points
$({\bf u}^{[1]},{\bf u}^{[2]},\ldots,{\bf u}^{[d]}) $ in $ ( \mathbb{P}^d)^d$ 
such that the kernel of (\ref{eq:aholtmatrix}) contains a vector 
whose last $d$ coordinates are non-zero. Equivalently,
the kernel of $C$ contains a vector whose $d$ coordinates are all non-zero.

Let ${\bf E}_d$ be the transpose of $C$. This is a
$d \times e$-matrix whose $k^{\rm th}$ row has entries that are $\mathbb{Z}$-linear in ${\bf u}^{[k]}$.
By construction, ${\rm EC}_{2,d}$ is the  set of points
$({\bf u}^{[1]},\ldots,{\bf u}^{[d]}) $ in $ ( \mathbb{P}^d)^d$ 
such that ${\bf v} \cdot {\bf E}_d = 0$ for some ${\bf v} \in (K\backslash \{0\})^d$.
This completes the proof. \end{proof}

By our matrix representation, the
 codimension of ${\rm EC}_{2,d}$  is at most $e-d+1$, so
\begin{equation}
\label{eq:dimlowerbound}
 {\rm dim}({\rm EC}_{2,d}) \,\,\geq \,\, d^2-(e-d+1). 
 \end{equation}
Examples~\ref{ex:EC22}, \ref{ex:EC23}, and~\ref{ex:EC24} suggest that (\ref{eq:dimlowerbound}) is an equality.
\begin{conjecture}
The dimension of $\,{\rm EC}_{2,d}$ equals $\, d^2-(e-d+1)$. 
\end{conjecture}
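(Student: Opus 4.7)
The plan is to complement the lower bound (\ref{eq:dimlowerbound}), which was already derived as a consequence of Theorem~\ref{prop:EC2d}, with a matching upper bound obtained via a generic fiber count. Using $e = \dim K_{2,d} + d(d+1) - 2^d$, a direct substitution gives the purely algebraic identity
$$ d^2 - (e - d + 1) \,=\, 2^d - 1 - \dim K_{2,d}, $$
so the conjecture is equivalent to $\dim \mathrm{EC}_{2,d} = 2^d - 1 - \dim K_{2,d}$, and (\ref{eq:dimlowerbound}) reads $\dim \mathrm{EC}_{2,d} \geq 2^d - 1 - \dim K_{2,d}$. Everything then reduces to the reverse inequality.

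For the upper bound, I would analyze the fibers of the dominant rational map
$$ \pi : \mathbb{P}(V^{\otimes d}) = \mathbb{P}^{2^d - 1} \,\dashrightarrow\, \mathrm{EC}_{2,d} \,\subseteq\, (\mathbb{P}^d)^d, \qquad [A] \longmapsto \bigl([\ell^{[1]}(A)], \ldots, [\ell^{[d]}(A)]\bigr). $$
The crucial observation is that $K_{2,d}$ acts on the source by translation without changing the image: for every $B \in K_{2,d}$ and every $\lambda \in K$ we have $\ell^{[k]}(\lambda A + B) = \lambda\, \ell^{[k]}(A)$, since $B \in \ker(\ell^{[k]})$, so that $\pi([\lambda A + B]) = \pi([A])$. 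Consequently, for any $A \notin K_{2,d}$, the fiber $\pi^{-1}(\pi([A]))$ contains the projective linear subspace $\mathbb{P}(KA + K_{2,d})$ of projective dimension $\dim K_{2,d}$. The generic fiber dimension formula for a dominant rational map of irreducible varieties then yields
$$ \dim \mathrm{EC}_{2,d} \,=\, (2^d - 1) - \dim(\text{generic fiber of }\pi) \,\leq\, 2^d - 1 - \dim K_{2,d}, $$
which matches the lower bound and so forces equality.

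The only step requiring care is the lower bound (\ref{eq:dimlowerbound}) itself, which I would accept as justified in the paper: it reflects the classical codimension bound $e - d + 1$ on the rank-deficiency locus of a $d \times e$ matrix, pulled back through the linear parametrization of the entries of ${\bf E}_d$ and applied to the irreducible component $\mathrm{EC}_{2,d}$. A pleasing byproduct of the equality is that matching the two bounds forces the generic fiber of $\pi$ to coincide exactly with $\mathbb{P}(KA + K_{2,d})$, which is equivalent to the assertion that the set $\bigl\{t \in (K^\ast)^d : t \cdot \tilde{\ell}(A) \in \mathrm{Im}(\tilde{\ell})\bigr\}$ collapses to the diagonal $K^\ast$ for generic $A$ --- a non-obvious rigidity statement about $\mathrm{Im}(\tilde{\ell})$ that the elementary fiber argument deduces for free.
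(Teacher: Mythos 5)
Since the statement is a \emph{conjecture} in the paper, there is no proof there to compare against; I will therefore evaluate your proposal on its own merits, and it appears to be correct and to actually resolve the conjecture. Your algebraic identity $d^2-(e-d+1) = 2^d-1-\dim K_{2,d}$ checks out, so the task is to establish the upper bound $\dim\mathrm{EC}_{2,d}\le 2^d-1-\dim K_{2,d}$. Your key observation is that $\ell^{[k]}$ vanishes on $K_{2,d}$ for every $k$, so $\pi$ is constant on the open dense subset $\{\lambda\ne 0\}$ of each $\mathbb{P}(KA+K_{2,d})$. Equivalently, and perhaps more directly than invoking the fiber-dimension theorem, $\pi$ factors through the linear projection $\mathbb{P}^{2^d-1}\dashrightarrow\mathbb{P}(V^{\otimes d}/K_{2,d})$; since $\mathrm{EC}_{2,d}$ is by definition the closure of the image of $\pi$, it is dominated by a variety of dimension $2^d-1-\dim K_{2,d}$, giving the upper bound at once. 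Combined with the paper's lower bound (\ref{eq:dimlowerbound}) --- which rests on Theorem~\ref{prop:EC2d} (that $\mathrm{EC}_{2,d}$ is an irreducible component of the locus of $d\times d$ minors of $\mathbf{E}_d$) and the classical height bound that \emph{every} minimal prime over the ideal of $d\times d$ minors of a $d\times e$ matrix has height at most $e-d+1$ --- this yields the conjectured equality. You correctly flag that the lower bound is the step being taken from the paper, and assuming Theorem~\ref{prop:EC2d} as stated, that step is sound.

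One small inaccuracy in your closing remark: matching the two bounds forces the induced dominant map $\mathbb{P}(V^{\otimes d}/K_{2,d})\dashrightarrow\mathrm{EC}_{2,d}$ to be generically \emph{finite}, not generically injective. Hence the generic fiber of $\pi$ is a finite union of subspaces of the form $\mathbb{P}(KA_i+K_{2,d})$, and the rescaling set $\{t\in(K^\ast)^d : t\cdot(\ell^{[1]}(A),\dots,\ell^{[d]}(A))\in \mathrm{Im}(\ell^{[1]},\dots,\ell^{[d]})\}$ is only forced to be a finite union of $K^\ast$-cosets of the diagonal, not the diagonal itself. This does not affect the validity of the main argument.
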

We do not know the dimension
of $K_{2,d} = \bigcap_k {\rm kernel}(\ell^{[k]})$.
In our examples, we saw that ${\rm dim}(K_{2,d}) = 1,0,3,10$ for $d=2,3,4,5$ respectively.
It would be desirable to better understand the common kernel $K_{n,d}$
for arbitrary $n$ and $d$:

\begin{problem}
Find the dimension of the space $K_{n,d}$ in  (\ref{eq:alltrivial}).
\end{problem}

Another problem is to understand the
diagonal of ${\rm EC}_{n,d}$ 
in the embedding (\ref{eq:dtuples}).
This diagonal parametrizes 
{\em simultaneous eigenconfigurations},
arising from special tensors $A$ whose $d$ maps
$\psi^{[1]},\ldots,\psi^{[d]}$
all have the same fixed point locus in $\mathbb{P}^{n-1}$.
Symmetric tensors $A$ have this property, and the
issue is to characterize all others.

\begin{example}[$n=2$] \rm
The diagonal of ${\rm EC}_{2,d}$
is computed by setting
${\bf u}^{[1]} = {\bf u}^{[2]} = \cdots = {\bf u}^{[d]}$
in the prime ideal  described in Theorem \ref{prop:EC2d}.
If $d$ is odd, then there is no constraint, by Theorem \ref{thm:noteverybinary}.
However, for $d$ even, the diagonal of ${\rm EC}_{2,d}$ is interesting.
For instance, for $d=2$, equating the rows in (\ref{eq:mat}) gives
two components
$$
\biggl\langle \hbox{$2 \times 2$-minors of} \,\,
\begin{pmatrix}
a_0 & \phantom{-}a_1 & a_2 \\
a_2 & - a_1 & a_0
\end{pmatrix}
\biggr\rangle \,\, = \,\,\,
\bigl\langle a_0 + a_2 \bigr\rangle \,\cap \,\bigl\langle a_0 - a_2, a_1 \bigr\rangle. $$
The first component is the known case of symmetric $2 \times 2$-matrices.
The second component is a point in ${\rm Sym}_2(\mathbb{P}^1)$,
namely the binary form $s^2 + t^2 = (s-it)(s+it) $.
This is the simultaneous eigenconfiguration of any matrix 
$A = \begin{small} \begin{pmatrix} \phantom{-} a & \! b\, \\ -b & \! a \,\end{pmatrix}\end{small}$
with $b \not = 0$.
\end{example}

\section{The eigendiscriminant}

The $d$-dimensional tensors of format $n {\times} n {\times} \cdots {\times} n$
represent points in a projective space $\mathbb{P}^N$ where $N = n^d-1$.
For a generic tensor $A \in \mathbb{P}^N$, the $\ell^{\rm th}$ eigenconfiguration,
in the sense of (\ref{eq:rank1ell}),
consists of a finite set of reduced points in $\mathbb{P}^{n-1}$.
We know from  Theorem \ref{thm:b}
that the number of these points   equals
$$ \rho(n,d,\ell) \,=\, \sum_{i=0}^{n-1} (d-1)^i \ell^{n-1-i}. $$

In this section we study the set $\Delta_{n,d,\ell}$ of all tensors $A$ for which 
the eigenconfiguration consists of fewer than $\rho(n,d,\ell)$ points
or is not zero-dimensional.
This set is a subvariety of $\mathbb{P}^N$, called the
{\em $\ell^{\rm th}$ eigendiscriminant}. We also abbreviate
\begin{equation}
\label{eq:genus}
 \gamma(n,d,\ell) \,\, = \, \,
\sum_{j=2}^{n-1} (-1)^{n-1+j}(j{-}1) 
\left[\sum_{k=0}^j(-1)^k {n \choose j{-}k}{j(d{-}1)-k\ell-1 \choose n-1}\right].
\end{equation}
The following is our main result in this section:

\begin{theorem} \label{thm:discdeg}
The $\ell^{\rm th}$ eigendiscriminant  is an irreducible hypersurface with
\begin{equation}
\label{eq:discdeg}
{\rm degree}(\Delta_{n,d,\ell}) \quad = \quad
2\gamma(n,d,\ell)+2\rho(n,d,\ell)-2.
\end{equation}
\end{theorem}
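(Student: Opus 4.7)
The plan is to reduce the computation of $\deg(\Delta_{n,d,\ell})$ to a Riemann--Hurwitz calculation on a one-parameter family of eigenconfigurations, and to identify $\gamma(n,d,\ell)$ as the arithmetic genus of the resulting \emph{eigencurve}. This is the standard framework used for discriminants of parameter spaces of zero-dimensional schemes.

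\textbf{Step 1: The eigencurve.}  I would fix a generic pencil $\{A_t = A_0 + t A_1\}$ in $\mathbb{P}^N$, parametrized by $t \in \mathbb{P}^1$. The fiberwise condition (\ref{eq:rank1ell}) then cuts out a curve $C \subset \mathbb{P}^{n-1} \times \mathbb{P}^1$, defined by the $2 \times 2$ minors of the $2 \times n$ matrix whose rows have bidegrees $(\ell,0)$ and $(d-1,1)$. The projection $\pi\colon C \to \mathbb{P}^1$ has fiber over $t$ equal to the $\ell^{\mathrm{th}}$ eigenconfiguration of $A_t$, so $\deg(\pi) = \rho(n,d,\ell)$ by Theorem~\ref{thm:b}.

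\textbf{Step 2: Riemann--Hurwitz.}  For a generic pencil, the intersection with the hypersurface $\Delta_{n,d,\ell}$ is transverse and consists of simple degenerations where exactly two eigenpoints collide. Equivalently, $\pi$ has only simple ramification, one ramification point above each intersection of the pencil with $\Delta_{n,d,\ell}$. The ramification divisor therefore has degree $\deg(\Delta_{n,d,\ell})$, and Riemann--Hurwitz gives
\[
2g(C) - 2 \,=\, -2\rho(n,d,\ell) + \deg(\Delta_{n,d,\ell}),
\]
which rearranges to the right side of (\ref{eq:discdeg}) provided $g(C) = \gamma(n,d,\ell)$. Justifying the simple-ramification claim requires a generic smoothness argument in the incidence variety $\{(A,x) : x \text{ is a singular eigenpoint of } A\} \subset \mathbb{P}^N \times \mathbb{P}^{n-1}$; I would verify it by exhibiting one tensor with a single nodal collision and invoking openness of this property along a generic pencil.

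\textbf{Step 3: The genus computation.}  This is where the combinatorial formula (\ref{eq:genus}) must be produced, and I expect it to be the main obstacle. View the $2\times n$ matrix $\mathcal{M}$ as a morphism
\[
\mathcal{F} \,:=\, \mathcal{O}(-\ell,0) \oplus \mathcal{O}(-(d-1),-1) \,\longrightarrow\, \mathcal{O}^{\oplus n}
\]
on $\mathbb{P}^{n-1} \times \mathbb{P}^1$. The structure sheaf $\mathcal{O}_C$ admits the Eagon--Northcott resolution
\[
0 \to \mathrm{Sym}^{n-2}(\mathcal{O}^{n})^\vee \otimes \det(\mathcal{F}) \to \cdots \to \mathcal{O}^{n} \otimes \det(\mathcal{F}) \to \mathcal{O} \to \mathcal{O}_C \to 0,
\]
whose terms are explicit direct sums of line bundles $\mathcal{O}(-j(d-1)+k\ell, -j)$ indexed by $j$ and $k$ with appropriate multiplicities $\binom{n}{j-k}$. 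Taking the alternating sum of Euler characteristics on $\mathbb{P}^{n-1} \times \mathbb{P}^1$, where $\chi(\mathcal{O}(a,b)) = (b+1)\binom{a+n-1}{n-1}$, and then solving $\chi(\mathcal{O}_C) = 1 - g(C)$ after correcting for the contribution of the base $\mathbb{P}^1$, should yield exactly $g(C) = \gamma(n,d,\ell)$. The main technical task is bookkeeping the signs and the binomial shifts so that they match (\ref{eq:genus}); the inner alternating sum $\sum_k (-1)^k \binom{n}{j-k}\binom{j(d-1)-k\ell-1}{n-1}$ should arise precisely from the degree-$j$ piece of the Eagon--Northcott complex, and the outer factor $(j-1)$ from the Euler characteristic of the $\mathbb{P}^1$-direction.

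\textbf{Step 4: Irreducibility.}  To show that $\Delta_{n,d,\ell}$ is an irreducible hypersurface, I would work with the incidence variety
\[
\mathcal{I} \,=\, \{(A,x) \in \mathbb{P}^N \times \mathbb{P}^{n-1} : x \text{ is a non-reduced point of the eigenscheme of } A\}.
\]
The second projection $\mathcal{I} \to \mathbb{P}^{n-1}$ is a projective bundle over the homogeneous space $\mathbb{P}^{n-1}$ (the fiber over $x$ is a linear subspace of $\mathbb{P}^N$, cut out by the conditions that $x$ be an eigenpoint and that the Jacobian of the minors drop rank at $x$). Hence $\mathcal{I}$ is irreducible, and $\Delta_{n,d,\ell}$, being the image under the first projection, is irreducible as well. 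That the image is a hypersurface, rather than of higher codimension, follows from Step~2, since the degree computed there is positive.

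The main obstacle, as indicated, is the bookkeeping in Step~3: matching the Euler characteristic obtained from the Eagon--Northcott complex to the exact combinatorial expression (\ref{eq:genus}). Steps~1, 2, and~4 are essentially standard once the setup is in place.
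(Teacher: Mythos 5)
Your proof follows essentially the paper's route: an eigencurve over a generic pencil, genus via Eagon--Northcott, and Riemann--Hurwitz on the degree-$\rho(n,d,\ell)$ projection to $\mathbb{P}^1$. Your curve $C\subset\mathbb{P}^{n-1}\times\mathbb{P}^1$ (cut out by a $2\times n$ matrix of bidegrees $(\ell,0)$ and $(d-1,1)$) is the same curve the paper calls $\mathcal{C}\subset\mathbb{P}^{n-1}$, namely the rank-$\leq 2$ locus of the $3\times n$ matrix (\ref{eq:rank2}); the left-kernel map $\mu$ is your projection $\pi$ to $\mathbb{P}^1$, and the other projection gives the isomorphism between the two presentations. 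Correspondingly your Step 3 (bigraded Eagon--Northcott on the product) is a repackaging of the paper's computation of the Hilbert polynomial of $\mathcal{C}$ from the Eagon--Northcott complex of the $3\times n$ matrix on $\mathbb{P}^{n-1}$. You are right that Step 3 is the crux, and you should be aware that the paper does not close it either: Lemma \ref{lem:genusformula} is stated without proof, its proof is explicitly deferred to a future paper, and the Hilbert-polynomial calculation presented ``Instead of Lemma \ref{lem:genusformula}'' is only asserted --- not shown --- to equal (\ref{eq:genus}); the authors remark that they ``presently do not know a direct argument to show that they are equal.'' So the hole you leave open in Step 3 is the same hole the paper leaves open.

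One concrete error is in Step 4: the fiber of $\mathcal{I}\to\mathbb{P}^{n-1}$ over a fixed $x$ is not a linear subspace of $\mathbb{P}^N$, so $\mathcal{I}$ is not a projective bundle as claimed. The eigenpoint conditions on $A$ are linear, but the condition that the Jacobian of the $2\times 2$ minors drop rank at $x$ is determinantal of degree $n-1$ in the entries of $A$. The standard repair is to pass to triples $(A,x,v)$ with $v$ a tangent direction at $x$ annihilated by that Jacobian; the fiber over the flag $(x,v)$ is then genuinely linear, the total space is irreducible, and projecting away $v$ gives irreducibility of $\mathcal{I}$ and hence of $\Delta_{n,d,\ell}$. (The paper omits the irreducibility argument entirely, so your Step 4, once corrected, supplies a detail the paper's proof leaves implicit.) For the genericity claim in Step 2, the cleanest justification is the paper's own observation that the restricted binary form $\Delta_{n,d,\ell}(aA+bB)$ is squarefree for generic $A,B$, which gives simple ramification directly without a separate incidence-variety argument.
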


We identify $\Delta_{n,d,\ell}$ with the unique (up to sign) irreducible polynomial 
with integer coefficients in the $n^d$ unknowns $a_{i_1 i_2 \cdots i_d}$
that vanishes on this hypersurface. 
From now on we use the term eigendisciminant to refer to the 
polynomial $\Delta_{n,d,\ell}$.

The case of most interest is $\ell= 1$,
which pertains to the eigenconfiguration of 
a tensor in the usual sense of (\ref{eq:rank1}).
For that case, we write $\Delta_{n,d} = \Delta_{n,d,1}$
for the eigendiscriminant, and the formula for its degree 
can be simplified as follows:

\begin{corollary} \label{cor:kauers}
The eigendiscriminant is a homogeneous polynomial of degree 
$$ {\rm degree}(\Delta_{n,d}) \,\, = \,\, n(n-1)(d-1)^{n-1}. $$
\end{corollary}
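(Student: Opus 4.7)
The plan is to specialize Theorem~\ref{thm:discdeg} at $\ell = 1$ and simplify. With $\ell = 1$ we have $\rho(n,d,1) = \sum_{i=0}^{n-1}(d-1)^i$, so the formula in Theorem~\ref{thm:discdeg} reads
$$\deg(\Delta_{n,d}) \,=\, 2\gamma(n,d,1) + 2\sum_{i=0}^{n-1}(d-1)^i - 2.$$
The corollary is therefore equivalent to the purely combinatorial identity
$$\gamma(n,d,1) \,=\, \binom{n}{2}(d-1)^{n-1} - \sum_{i=1}^{n-1}(d-1)^i,$$
where $\gamma(n,d,1)$ is the double sum obtained by setting $\ell=1$ in (\ref{eq:genus}).

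First I would verify the identity in the base cases, as a sanity check on the bookkeeping. For $n = 2$ the outer sum defining $\gamma$ is empty, so $\gamma(2,d,1) = 0$, and the right-hand side is $(d-1) - (d-1) = 0$, giving degree $2d - 2 = 2 \cdot 1 \cdot (d-1)$ as predicted. For $n = 3$ only $j = 2$ contributes, and a short expansion yields $\gamma(3,d,1) = (2d-3)(d-1)$, which agrees with $3(d-1)^2 - (d-1) - (d-1)^2 = (d-1)(2d-3)$ and reproduces the expected degree $6(d-1)^2$.

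For general $n$, the heart of the argument is evaluating the nested sum
$$\gamma(n,d,1) \,=\, \sum_{j=2}^{n-1}(-1)^{n-1+j}(j-1) \sum_{k=0}^{j}(-1)^k \binom{n}{j-k}\binom{j(d-1)-k-1}{n-1}.$$
My approach would be to process the inner sum over $k$ using the observation that $k \mapsto \binom{j(d-1)-k-1}{n-1}$ is a polynomial in $k$ of degree $n-1$; by the finite-difference identity $\sum_k(-1)^k\binom{n}{j-k}p(k) = 0$ for any polynomial $p$ of degree less than $j$, only the top-degree contribution survives, and after swapping the order of summation over $j$ the geometric-series correction $\sum_{i=1}^{n-1}(d-1)^i$ emerges from boundary terms. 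The main obstacle is precisely this bookkeeping, since the alternating signs and the $(j-1)$ weight make a naive calculation delicate. Given the explicit acknowledgment of Manuel Kauers in the paper, I expect the identity is in fact verified algorithmically via creative telescoping (Zeilberger's algorithm), which produces a short certificate that both sides satisfy the same first-order recurrence in $n$ with matching initial value at $n=2$; this is the most painless route to a rigorous proof of a corollary of this shape.
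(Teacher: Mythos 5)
Your reduction to the identity $\gamma(n,d,1) = \binom{n}{2}(d-1)^{n-1} - \sum_{i=1}^{n-1}(d-1)^i$ is correct, and your base-case checks at $n=2,3$ are accurate. But the sketched mechanism for the general case has a real gap. The finite-difference vanishing you cite, $\sum_k(-1)^k\binom{n}{j-k}p(k)=0$, holds when the sum runs over the full range on which $\binom{n}{j-k}$ is nonzero, namely $k\in\{j-n,\ldots,j\}$, and then the correct degree threshold is $\deg p < n$, not $\deg p < j$. Since $p(k)=\binom{j(d-1)-k-1}{n-1}$ has degree $n-1<n$, the full-range sum would vanish entirely; the inner sum in (\ref{eq:genus}) is nonzero only because it is truncated at $k=0$, omitting all the terms with $k\in\{j-n,\ldots,-1\}$ (a nonempty set, since $j\le n-1$). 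So the ``boundary terms'' are not a correction to a dominant piece — they carry all of the content — and the claim that ``only the top-degree contribution survives'' is the wrong picture.

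The paper handles this by attacking the inner sum over $k$ directly with Gosper's algorithm: it produces an explicit rational multiple $S_{n,d,j}(k)$ of the summand with $S_{n,d,j}(k+1)-S_{n,d,j}(k)=s_{n,d,j}(k)$, telescoping the truncated sum to the closed form $\frac{d-1}{d-2}\binom{n-1}{j}\binom{j(d-1)-1}{n-1}$. The vanishing identity $\sum_j(-1)^j\binom{n}{j}\binom{j}{k}=(-1)^n[k=n]$ is then used, but for the \emph{outer} sum over $j$, after expanding $\binom{dj-1}{n}(j-1)$ in the basis $\binom{j}{k}$. Your fallback suggestion — verify the identity algorithmically by creative telescoping / Zeilberger — is indeed the right instinct and is essentially what the paper does (Gosper being the indefinite special case), but as written your proposal neither carries it out nor correctly describes what survives the alternating sum.
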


The following proof is due to Manuel Kauers.
We are grateful for his help.

\begin{proof}
We set  $\ell = 1$ in the expression (\ref{eq:discdeg}).
Our claim is equivalent to
\begin{equation}
\label{eq:desired}
    \gamma(n,d,1) \,\,\,= \,\,\,\,\binom n2(d-1)^{n-1} \,-\, \frac{(d-1)^n-1}{d-2}\,+\,1.
    \end{equation}
We abbreviate the innermost summand in (\ref{eq:genus})  as
  \[
    s_{n,d,j}(k) \,\,= \,\, (-1)^k \binom n{j-k}\binom{j(d-1)-k-1}{n-1}. 
  \]
  Using Gosper's algorithm
  \cite[Chapter 5]{PWZ}, we find the multiple
   $$ S_{n,d,j}(k) \,\,:= \,\, \frac{(j(d-1)-k)(j-k-n)}{nj(d-2)}s_{n,d,j}(k) .$$
   It can now be checked by hand that this satisfies
  $$ S_{n,d,j}(k+1) - S_{n,d,j}(k) \,\,= \,\,s_{n,d,j}(k). $$
  Summing over the range $k=0,\dots,j-1$ and simplifying expressions lead to
  \begin{equation}
  \label{eq:innersum}    \quad \sum_{k=0}^j s_{n,d,j}(k)
    \,= \,S_{n,d,j}(j+1)  - S_{n,d,j}(0) 
\,\,    = \,\, \frac{d-1}{d-2}\binom {n-1}j\binom{j(d-1)-1}{n-1}.
\end{equation}
This is  valid for all $j\geq1$. 

  Next we introduce the expression
  \begin{equation}
  \label{eq:defineA}
    A(n,d) \,\,= \,\,\sum_{j=0}^n (-1)^j \binom nj \binom{d\,j-1} n (j-1).
 \end{equation}
Consider $\binom{d\,j-1}n(j-1)$ as a polynomial in $j$ of degree $n+1$. In the binomial basis,
  \begin{alignat*}1
    \binom{d\,j-1}n(j-1) 
    &\,=\, (n+1)d^n\binom j{n+1}
    + \biggl((d-1)d^{n-1}\binom {n+1}2 - d^n\biggr)\binom jn\\
    &\quad{}+ \text{lower degree terms},
  \end{alignat*}
  Recall from \cite[page 190]{GKP} that
  \[    \sum_{j=0}^n (-1)^j \binom nj \binom jk \,\,=\,\, \left\{\begin{array}{ll}
        (-1)^n &\text{ if $k=n$,}\\
        0 &\text{ otherwise.}
      \end{array}\right.  \]
  This implies 
  $$ A(n,d) \,\,= \,\,(-1)^n \biggl((d-1)d^{n-1}\binom {n+1}2 - d^n\biggr).   $$
  
  Combining this identity with (\ref{eq:genus}),  (\ref{eq:innersum}) and (\ref{eq:defineA}), we now derive
  \begin{alignat*}1
   \gamma(n,d,1) & =  (-1)^{n-1}\frac{d-1}{d-2}\biggl(A(n-1,d-1) + \binom{-1}{n-1}\biggr) \\
    & = (-1)^{n-1}\frac{d{-}1}{d{-}2}\biggl( \! (-1)^{n-1} \biggl(\!(d{-}2)(d{-}1)^{n-2}\binom {n}2 - (d{-}1)^{n-1}
    \! \biggr) + (-1)^{n-1} \!\biggr)\\
    &=\frac{d-1}{d-2}\biggl((d-2)(d-1)^{n-2}\binom {n}2 - (d-1)^{n-1}\biggr) + \frac{d-1}{d-2}\\
    &=\binom n2 (d-1)^n - \frac{(d-1)^n - (d-1)}{d-2}.
  \end{alignat*}
  This equals the desired expression for $\gamma(n,d,1)$
  on the right hand side of (\ref{eq:desired}).
  \end{proof}

The proof of Theorem \ref{thm:discdeg}
involves some algebraic geometry
and will be presented later in this section.
We first discuss a few examples to illustrate $\Delta_{n,d}$.

\begin{example}[$d=2$] \rm
The eigendiscriminant of an $n \times n$-matrix $A = (a_{ij})$
is the discriminant of its characteristic polynomial. In symbols,
$$ \Delta_{n,2} \,\, = \,\, {\rm discr}_\lambda \bigl(\,{\rm det}(\,A - \lambda \cdot {\rm Id}_n)\,\bigr) .$$
This is a homogeneous polynomial of degree $n(n-1)$ in the
matrix entries $a_{ij}$.
For instance, for a $3 \times 3$-matrix, the eigendiscriminant is a 
polynomial with $144$ terms:
$$ \Delta_{3,2} \,= \,
a_{11}^4 a_{22}^2-2 a_{11}^4 a_{22}a_{33}+
4 a_{11}^4 a_{23}a_{32} + a_{11}^4 a_{33}^2 - 2 a_{11}^3 a_{12} a_{21} a_{22}
+ \cdots + a_{23}^2 a_{32}^2 a_{33}^2.
$$
This polynomial vanishes whenever two of the eigenvalues of $A$ coincide.

There is a beautiful theory behind $\Delta_{n,2}$
in the case when $A$ is real symmetric, so
the eigenconfiguration is defined over $\mathbb{R}$.
The resulting {\em symmetric eigendiscriminant}
is a nonnegative polynomial of degree $n(n-1)$
in the  $\binom{n+1}{2}$ matrix entries.
Its real variety has codimension $2$ 
and degree $\binom{n+1}{3}$,  and its determinantal representation
governs expressions of
$\Delta_{n,2}$ as a sum of squares of polynomials
of degree $\binom{n}{2}$.
For further reading on this topic see \cite[Section 7.5]{Stu}
and the references given there.
\end{example}

\begin{example}[$n=2$] \rm
The eigendiscriminant of a $d$-dimensional tensor of format
$\,2 {\times} 2 {\times} \cdots {\rm \times} 2 \,$ is the discriminant
of the associated binary form in (\ref{eq:assbinoform}), i.e.
$$
\Delta_{2,d} \,\, = \,\,
{\rm disc}_{(x,y)} \bigl(  \,y \cdot \psi_1(x,y) - x  \cdot \psi_2(x,y) \,\bigr).
$$
This is a  homogeneous polynomial of degree
$2d-2$ in the $2^d$ tensor entries $a_{i_1 i_2 \cdots i_d}$.
\end{example}

\begin{example}[$n=d=3$] \rm
The eigendiscriminant $\Delta_{3,3}$ of
a $3 {\times} 3 {\times} 3$-tensor $A = (a_{ijk})$
is a homogeneous polynomial of degree $24$ in the
$27$ entries $a_{ijk}$.
If we specialize $A$ to a symmetric tensor, corresponding to a
ternary cubic
$$ \phi(x,y,z) \,\, = \,\,\, c_{300} x^3 + c_{210} x^2 y + c_{201} x^2 z + \cdots + c_{003} z^3, $$
then $\Delta_{3,3}$ remains irreducible. The resulting irreducible polynomial of degree $24$ 
in the ten coefficients $c_{ijk}$
is the  {\em eigendiscriminant of a tenary cubic}.
At present we do not know an explicit formula for $\Delta_{3,3}$, but it is 
fun to explore specializations of the eigendiscriminant. For instance, 
if $\, \phi = (2x+y)(2x+z)(2y+z)+u \cdot xyz\,$ then
\begin{tiny} 
$$ \begin{matrix} \Delta_{3,3} \,\, = \,\, 
16 u^{24}+2304 u^{23}+152784 u^{22}+6097536 u^{21}+159761808 u^{20}+
      2779161840 u^{19}+29727588168 u^{18} \\ +124641852624 u^{17}  -1234078589016 u^{16}-
      18314627517360 u^{15}-8929524942432 u^{14}+1200933047925648 u^{13} \\ +
      3722203539791685 u^{12} -63418425922462464u^{11} -257381788882972176u^{10}  +
      2676970903961440800 u^9 \\ +7927655114836286496u^8  -89013482239908955392u^7-
      13934355026171012352 u^6 \\ +1729250356371556792320u^5  -5159222324901192930048 u^4
       -11838757458480721920 u^3 \\ +28255456641734116982784 u^2-
      56809371779894977339392 u+37304830510913780269056 ,
      \end{matrix}
  $$    \end{tiny} 
  and if $\,\phi = u \cdot x^3+v \cdot y^3+w \cdot z^3 + x yz\,$ then $\Delta_{3,3}$ is the square of polynomial
  \begin{tiny}  
$$ \begin{matrix} 
 531441u^4v^4w^4-708588u^5v^3w^3-708588u^3v^5w^3-708588u^3v^3w^5
     +1062882u^4v^4w^2+1062882u^4v^2w^4 \\ +1062882u^2v^4w^4-1810836u^3v^3w^3
     -177147u^4v^4+39366u^4v^2w^2+39366u^2v^4w^2-177147u^4w^4 \\ +
     39366u^2v^2w^4 -177147v^4w^4+314928u^3v^3w+314928u^3vw^3+314928uv^3w^3 
     {-}244944u^2v^2w^2{-}46656u^3vw \\ -46656uv^3w {-} 46656uvw^3{+}23328u^2v^2
     {+}23328u^2w^2{+}23328v^2w^2{+}6912uvw{-}2304u^2-2304v^2-2304w^2+256.
\end{matrix}
$$
\end{tiny}
\end{example}

We now embark towards the proof of Theorem \ref{thm:discdeg}.
Let $A$ and $B$ be generic tensors of the same format, and write
$(\psi_1,\ldots,\psi_n)$ and
$(\omega_1,\ldots,\omega_n)$ for the 
vectors of degree $d-1$ polynomials that represent
the corresponding maps $\mathbb{P}^{n-1} \dashrightarrow \mathbb{P}^{n-1}$.
Let $\mathcal{C}$ denote the subvariety of $\mathbb{P}^{n-1}$ defined by
the determinantal constraints
\begin{equation}
\label{eq:rank2}
{\rm rank} \begin{pmatrix} 
\psi_1({\bf x}) & \psi_2 ({\bf x})& \cdots & \psi_n({\bf x}) \\
\omega_1({\bf x}) & \omega_2 ({\bf x})& \cdots & \omega_n({\bf x}) \\
x_1^\ell & x_2^\ell & \cdots & x_n^\ell \\
\end{pmatrix} \,\,\, \leq \,\,\, 2 .
\end{equation}
Since the $\psi_i$ and $\omega_j$ are generic,
this defines a variety of codimension $n-2$.
We find that $\mathcal{C}$ is a curve that is smooth and irreducible,
by an argument similar to that in the proof of Theorem \ref{thm:b}.
The following lemma is the key to~Theorem \ref{thm:discdeg}.

\begin{lemma} \label{lem:genusformula}
The expression in (\ref{eq:genus})
is the genus of the curve $\mathcal{C}$. In symbols,
$$ {\rm genus}(\mathcal{C}) \, = \,\gamma(n,d,\ell). $$
\end{lemma}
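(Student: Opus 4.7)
The plan is to treat $\mathcal{C}$ as a codimension $n{-}2$ determinantal subvariety of $\mathbb{P}^{n-1}$ and to compute its arithmetic genus directly from an Eagon--Northcott resolution of its structure sheaf. First I would verify that, for a generic choice of $\psi,\omega$, the curve $\mathcal{C}$ is smooth and irreducible of the expected dimension. Viewing the rank condition (\ref{eq:rank2}) as the degeneracy locus of the map
\[
\phi:\mathcal{O}_{\mathbb{P}^{n-1}}^{\,n}\longrightarrow F\,:=\,\mathcal{O}(d-1)^{\oplus 2}\oplus\mathcal{O}(\ell),
\]
a Bertini-type theorem for degeneracy loci of globally generated vector bundles, applied to the family parametrized by the pair $(\psi,\omega)$ with the third row fixed, yields smoothness of $\mathcal{C}$ and emptiness of the rank-$\leq 1$ locus. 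The same vector-bundle argument used at the end of the proof of Theorem~\ref{thm:b} adapts here.

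Next I would write down the Eagon--Northcott complex resolving $\mathcal{O}_\mathcal{C}$,
\[
0\to C_{n-2}\to\cdots\to C_{1}\to\mathcal{O}_{\mathbb{P}^{n-1}}\to\mathcal{O}_\mathcal{C}\to 0,
\]
with $C_i=\bigwedge^{i+2}\mathcal{O}^{\,n}\otimes(S^{i-1}F)^{\vee}\otimes(\det F)^{-1}$. Using $\det F=\mathcal{O}(2(d-1)+\ell)$ and splitting the symmetric power according to the decomposition of $F$, each $C_i$ reduces~to
\[
C_i\;=\;\bigoplus_{j=2}^{i+1}\mathcal{O}\bigl(-j(d-1)-(i-j+2)\ell\bigr)^{\,\oplus(j-1)\binom{n}{i+2}}.
\]
Taking Euler characteristics term by term, using $\chi(\mathcal{O}_{\mathbb{P}^{n-1}}(m))=\binom{m+n-1}{n-1}$, and invoking $\chi(\mathcal{O}_\mathcal{C})=1-g$ for the smooth connected curve $\mathcal{C}$, I would obtain
\[
g\;=\;\sum_{i=1}^{n-2}(-1)^{i+1}\binom{n}{i+2}\sum_{j=2}^{i+1}(j-1)\binom{-j(d-1)-(i-j+2)\ell+n-1}{n-1}.
\]

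The final step is to rewrite this as $\gamma(n,d,\ell)$ from (\ref{eq:genus}). I would apply the reflection $\binom{-a}{n-1}=(-1)^{n-1}\binom{a+n-2}{n-1}$ to each binomial, swap the order of summation so that $j$ becomes the outer index, and reindex the inner sum by $t=i-j+2$; this produces a partial alternating sum of the form $\sum_{t=1}^{n-j}(-1)^{t}\binom{n}{t+j}\binom{j(d-1)+t\ell-1}{n-1}$. Applying the finite-difference identity $\sum_{s=0}^{n}(-1)^{s}\binom{n}{s}P(s)=0$, valid for any polynomial $P$ of degree less than $n$ (and in particular for the binomial factor above), converts this partial sum into the complementary sum over the index range $k=0,\dots,j$ that appears in $\gamma(n,d,\ell)$, with the correct overall sign $(-1)^{n-1+j}$. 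I expect the main obstacle to be precisely this bookkeeping of signs and index shifts; the smoothness step and the Eagon--Northcott calculation are standard, but matching the Euler characteristic exactly to the formula (\ref{eq:genus}) requires some combinatorial care, most cleanly summarized by invoking the finite-difference annihilation rather than a brute-force expansion.
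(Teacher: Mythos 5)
Your proposal is correct, and in fact it supplies a step that the paper itself declines to carry out. The paper's authors state explicitly that their proof of Lemma~\ref{lem:genusformula} is deferred to a later publication, and that what they include ``Instead of Lemma~\ref{lem:genusformula}'' is only an Eagon--Northcott computation of the genus in a form that \emph{looks} different from~(\ref{eq:genus}); they add that they ``presently do not know a direct argument to show that they are equal.'' You use the same Eagon--Northcott resolution (your $C_i$ agrees term by term with the paper's twisted $E_j$ after the substitution $j_{\mathrm{paper}} = i + 2$, inner index $j_{\mathrm{yours}} = j_{\mathrm{paper}} - k - 1$), so up to that point you and the paper coincide. The new content is your final paragraph, and I checked it: after the reflection $\binom{-a}{n-1} = (-1)^{n-1}\binom{a+n-2}{n-1}$, swapping sums, and setting $t = i - j + 2$, one arrives at
\[
g \;=\; (-1)^{n-1}\sum_{j=2}^{n-1}(-1)^{j-1}(j-1)\sum_{t=1}^{n-j}(-1)^t\binom{n}{t+j}\binom{j(d-1)+t\ell-1}{n-1},
\]
and the finite-difference identity $\sum_{s=0}^n(-1)^s\binom{n}{s}P(s)=0$, applied to the degree-$(n-1)$ polynomial $P(s)=\binom{j(d-1)+(s-j)\ell-1}{n-1}$ (here $s=t+j$), converts the partial sum over $s\in\{j+1,\dots,n\}$ into minus the complementary sum over $s\in\{0,\dots,j\}$; substituting $k=-t$ and collecting signs gives exactly $\sum_{j=2}^{n-1}(-1)^{n-1+j}(j-1)\sum_{k=0}^j(-1)^k\binom{n}{j-k}\binom{j(d-1)-k\ell-1}{n-1}$, which is~(\ref{eq:genus}). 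So your identity does close the gap the authors leave open. The smoothness and irreducibility of $\mathcal{C}$ are asserted rather than proved in both your write-up and the paper (both appeal to the vector-bundle argument from Theorem~\ref{thm:b}); if you wanted a fully rigorous account you would still need to spell that step out, since $\chi(\mathcal{O}_{\mathcal C})=1-g$ requires $\mathcal{C}$ connected, but you are not at a disadvantage relative to the paper on this point.
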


Using this lemma, we now derive the degree of the eigendiscriminant.

\begin{proof}[Proof of Theorem \ref{thm:discdeg}]
We define a map $\mu:\mathcal{C} \rightarrow \mathbb{P}^1$ as follows.
For any point ${\bf x}$ on the curve $\mathcal{C}$, the matrix
in (\ref{eq:rank2}) has rank $2$, so, up to scaling, there exists a unique 
row vector $(a,b,c) \in K^3$ that spans the left kernel of that $3
\times n$-matrix. We define the image of ${\bf x} \in \mathcal{C}$ to be the point
$\mu({\bf x}) = (a : b)$ on the projective line~$\mathbb{P}^1$. 
This condition means that ${\bf x}$ is an eigenpoint of the tensor $aA + bB$.
Conversely, for any $(a:b) \in \mathbb{P}^1$, the fiber
$\mu^{-1}(a:b)$ consists precisely of the eigenpoints of $aA + bB$.
Hence, since $A$ and $B$ are generic, the generic fiber is finite and reduced of
cardinality  $\rho(n,d,\ell)$. In other words, $\mu:\mathcal{C} \rightarrow \mathbb{P}^1$
is a map of degree $\rho(n,d,\ell)$.

We restrict the eigendiscriminant to our $\mathbb{P}^1$ of tensors.
The resulting binary form $\Delta_{n,d,\ell}(aA + bB)$ is squarefree,
and its degree is the left hand side in (\ref{eq:discdeg}).
The points $(a:b) \in \mathbb{P}^1$ where 
 $\Delta_{n,d,\ell}(aA + bB) = 0$ are the {\em branch points} of the map $\mu$.
 The corresponding multiplicity-two eigenpoints ${\bf x}$ form the
 {\em ramification divisor} on $\mathcal{C}$. The number of branch points of $\mu$
 is the degree of the eigendiscriminants $\Delta_{n,d,\ell}$.
 
  The Riemann-Hurwitz Formula
\cite [Exercise 8.36]{fulton} states that the number of branch points 
of the map $\mu: \mathcal{C} \rightarrow \mathbb{P}^1$ is
$2 \cdot {\rm genus}(\mathcal{C}) + 2 \cdot  {\rm degree}(\mu)  - 2 $.
By the first paragraph, and by Lemma \ref{lem:genusformula},
this expression is the right hand side of~(\ref{eq:discdeg}).
\end{proof}

Our proof of Lemma~\ref{lem:genusformula} is fairly complicated,
and we decided not to include it here. It is  based on resolutions of 
vector bundles, like those seen in the proof of Theorem \ref{thm:b}.
We plan to develop this further and publish it in a later paper on
discriminants arising from maximal minors of matrices 
with more than two rows.

What we shall do instead is to prove an alternative
combinatorial formula for the genus of $\mathcal{C}$
that is equivalent to (\ref{eq:genus}).
This does not prove Lemma~\ref{lem:genusformula}
because we presently do not know a direct argument
to show that they are equal.
Nevertheless, the following discussion is an
illustration of useful commutative algebra techniques.

\begin{proof}[Instead of Lemma \ref{lem:genusformula}] 
The Hilbert polynomial $H_\mathcal{C}(t)$ of the curve $\mathcal{C}$ equals
$$ H_{\mathcal{C}}(t) \,\,\, = \,\,\, {\rm degree}(\mathcal{C}) \cdot t \,+\,  (1-{\rm genus}(\mathcal{C})). $$
Recall that $\mathcal{C}$ is a linear section of the
variety defined by the maximal minors of a $3 \times n$-matrix
whose rows are homogeneous of degrees $d-1,d-1$ and $\ell$.
That variety is Cohen-Macaulay. 
We shall compute the Hilbert polynomial of the coordinate ring of $\mathcal{C}$ from its
graded minimal free resolution over $S = K[x_1,\ldots, x_n]$.

Consider the $S$-linear map from $\,F=S^{\oplus n}\,$ to
  $\,G = S(d-1)^{\oplus 2} \oplus S(\ell)\,$ given by
\begin{equation}
\label{eq:homomorphism}
\alpha \,= \,
\begin{pmatrix} 
\psi_1({\bf x}) & \psi_2 ({\bf x})& \cdots & \psi_n({\bf x}) \\
\omega_1({\bf x}) & \omega_2 ({\bf x})& \cdots & \omega_n({\bf x}) \\
x_1^\ell & x_2^\ell & \cdots & x_n^\ell \\
\end{pmatrix}.  
\end{equation}
By \cite[Section A2H]{Eis}, the
corresponding {\em  Eagon-Northcott complex}
 $\,\mathrm{EN}(\alpha)$ equals
\[
0 \rightarrow \mathrm{Sym}_{n-3} (G^\vee) \otimes \bigwedge^n \! F \rightarrow \mathrm{Sym}_{n-4} (G^\vee) \otimes \bigwedge^{n-1} \!\! F \rightarrow \cdots \rightarrow G^\vee \otimes \bigwedge^4 \!
F \rightarrow \bigwedge^3 \! F \rightarrow \bigwedge^3 \! G, 
\]
where $\mathrm{Sym}_i (G^\vee)$ is the $i^\mathrm{th}$ symmetric power of $G^\vee$ and $\bigwedge^i F$ is the $i^\mathrm{th}$ exterior power of $F$. 
We compute the Hilbert polynomial $H_M(t)$ of each module $M$ in $\mathrm{EN}(\alpha)$.

Since $\mathcal{C}$ has codimension 
$n-2 = \mathrm{rank} \, F - \mathrm{rank} \, G$+1, 
the complex $\mathrm{EN}(\alpha) \otimes S(-2d-\ell+2)$ is a free resolution of the coordinate ring of $\mathcal{C}$. 
In particular, 
\begin{equation}
\label{eq:HC} 
H_\mathcal{C}(t) \,\,= \,\,
H_{\bigwedge^3 G}(t-2d-\ell+2) + \sum_{j=3}^n (-1)^j H_{E_j} (t-2d-\ell+2),
\end{equation}
where $E_j =  \mathrm{Sym}_{j-3} (G^\vee) \otimes \bigwedge^j F $.
Since $G^\vee = S(-d+1)^{\oplus 2} \oplus S(-\ell)$ and $F = S^{\oplus n}$, 
\[
\mathrm{Sym}_{j-3} (G^\vee)  =   \bigoplus_{k=0}^{j-3} S\left((j-k-3)(-d+1)-k\ell\right)^{\oplus j-k+2} \ \mbox{and} \ \bigwedge^j F  =  S^{\oplus {n \choose j}}.
\]
Their tensor product is the $j^{\rm th}$ term in $\mathrm{EN}(\alpha)$. As a graded $S$-module, it equals
$$ E_j  \,\,= \,\,\,
 \bigoplus_{k=0}^{j-3} S\left((j-k-3)(-d+1)-k\ell\right)^{\oplus (j-k-2){n \choose j}}.  
$$
The shifted Hilbert series of this module is the summand on the right of
(\ref{eq:HC}):
\[
H_{E_j}(t-2d-\ell+2) \,=\, \sum_{k=0}^{j-3} (j-k-2){n \choose j}
{t+(n{-}1)+(j{-}k{-}1)(1{-}d)-(k{+}1)\ell \choose n-1}.
\]
We conclude that the Hilbert polynomial 
$H_\mathcal{C}(t)$ of the curve $\mathcal{C}$ equals
\[
 {t{+}n{-}1 \choose n-1} + \sum_{j=3}^n (-1)^j \sum_{k=0}^{j-3} (j-k-2){n \choose j}
{t+(n{-}1)+(j{-}k{-}1)(1{-}d)-(k{+}1)\ell \choose n-1}. 
\]
The genus of $\mathcal{C}$ is obtained by substituting $t=0$ and subtracting
the result from $1$:
$$
 \gamma(n,d,\ell) \, = \,
\sum_{j=3}^n (-1)^{j-1} \sum_{k=0}^{j-3} (j{-}k{-}2){n \choose j}
{(n{-}1)+(j{-}k{-}1)(1{-}d)-(k{+}1)\ell\choose n-1}. 
$$
This formula is equivalent to  (\ref{eq:genus}).
\end{proof}

\begin{example}[$n=4$] \rm
The last formula seen above specializes to
$$ \gamma(4,d,\ell) \,\, = \,\,
 4 \binom{3+2(1{-}d)-\ell}{3} - 
\biggl[2 \binom{3+3(1{-}d)-\ell}{3} + \binom{3+2(1{-}d)-2\ell}{3} \biggr],
$$
while the genus formula in (\ref{eq:genus}) states
$$
\begin{matrix} \gamma(4,d,\ell) \,\,= & 
- \biggl[ 6\binom{2(d-1)-1}{3}-4 \binom{2(d-1)-\ell-1}{3}+ \binom{2(d-1)-2\ell-1}{3} \biggr] \qquad
\qquad \qquad \\ &
+ 2  \biggl[ 4\binom{3(d-1)-1}{3}-6 \binom{3(d-1)-\ell-1}{3}
+ 4 \binom{3(d-1)-2\ell-1}{3}
- \binom{3(d-1)-3\ell-1}{3} \biggr].
\end{matrix}
$$
Both of these evaluate to the cubic polynomial
$$ \gamma(4,d,\ell) \,= \,
5 d^3+5 d^2 \ell+3 d \ell^2 + \ell^3-21d^2-14d \ell-5 \ell^2+27d+9\ell-10.
$$
Therefore, by Theorem \ref{thm:discdeg}, the degree of the $\ell^{\rm th}$ 
eigendiscriminant for $n=4$ equals
$$  {\rm degree}(\Delta_{4,d,\ell}) \,\,=\,\,
3 d^3+3 d^2 \ell+2 d \ell^2+ \ell^3-12d^2-8d\ell-3\ell^2+15d+5\ell-6.
$$
For $\ell=1$, this factorizes as promised in Corollary \ref{cor:kauers}:
 $ \, {\rm degree}(\Delta_{4,d,1}) = 12 (d-1)^3$.
\end{example}

\section{Seven points in the plane}

Our study had been motivated by the
desire to find a geometric characterization of
eigenconfigurations among all
finite subsets of $\mathbb{P}^{n-1}$.
The solution for $n=2$ was presented in
Theorem \ref{thm:noteverybinary}. However, the
relevant geometry is more difficult in higher dimensions.
 In this section we take some steps towards a characterization  for $n=3$.
The eigenconfiguration of a general tensor $A$ in  $(K^3)^{\otimes d}$
 consists of $d^2-d+1$ points in $\mathbb{P}^2$. So, our question 
 can be phrased like this: given a configuration $Z \in (\mathbb{P}^2)^{d^2-d+1}$,
  decide whether it is an eigenconfiguration. If yes, 
 construct a corresponding tensor $A \in  (K^3)^{\otimes d}$,
 and decide whether $A$ can be chosen to be symmetric. 
 
 The first interesting case is $d=n=3$. Here the following result holds.
 
 \begin{theorem} \label{thm:nosixonconic}
 A configuration of seven points in $\mathbb{P}^2$ is the eigenconfiguration of a 
 $3 \times 3 \times 3$-tensor if and only if no six of the 
 seven
 points lie on a conic.
 \end{theorem}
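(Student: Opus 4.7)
The plan is to characterize eigenconfigurations of $3\times 3\times 3$-tensors through the linear syzygies of the homogeneous ideal $I_Z \subset S = K[x,y,z]$. Any tensor $A$ with quadratic coordinate polynomials $\psi_1, \psi_2, \psi_3$ produces the three cubic $2\times 2$-minors $F_1 = y\psi_3 - z\psi_2$, $F_2 = z\psi_1 - x\psi_3$, $F_3 = x\psi_2 - y\psi_1$ of the matrix in~(\ref{eq:rank1}), and these cubics satisfy the Koszul-type syzygy $xF_1 + yF_2 + zF_3 = 0$. Exactness of the Koszul complex on $(x,y,z)$ gives the converse: any triple of cubics obeying this syzygy arises from a triple of quadrics, unique modulo the trivial shifts $(\lambda x, \lambda y, \lambda z)$ with $\lambda \in S_1$. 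Thus $Z$ is an eigenconfiguration if and only if there exist three cubics $F_1, F_2, F_3 \in I_Z \cap S_3$ cutting out $Z$ scheme-theoretically and satisfying $xF_1 + yF_2 + zF_3 = 0$.

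A short Hilbert-function computation shows that whenever the seven points of $Z$ are distinct and at most six of them lie on a conic, we have $h^0(\mathcal{I}_Z(1)) = h^0(\mathcal{I}_Z(2)) = 0$, $h^0(\mathcal{I}_Z(3)) = 3$, and $I_Z$ is generated in degree three. The Hilbert--Burch theorem then furnishes the minimal free resolution
$$0 \,\longrightarrow\, S(-4) \oplus S(-5) \,\longrightarrow\, S(-3)^{3} \,\longrightarrow\, I_Z \,\longrightarrow\, 0,$$
and the degree-one column of the presentation matrix is the unique (up to scalar) linear syzygy $(\ell_1, \ell_2, \ell_3) \in S_1^3$ on any basis $F_1, F_2, F_3$ of $V := I_Z \cap S_3$.

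The crux is the equivalence: the coefficients $\ell_1, \ell_2, \ell_3$ are linearly independent in $S_1$ if and only if no six points of $Z$ lie on a conic. For one direction, a linear dependence among the $\ell_i$ permits, after a change of basis of $V$, taking $\ell_3 = 0$, whereupon $\ell_1 F_1 + \ell_2 F_2 = 0$ in the UFD $S$ forces $F_1 = \ell_2 Q$ and $F_2 = -\ell_1 Q$ for some quadric $Q$; each point of $Z$ then lies in $V(Q) \cup \bigl(V(\ell_1) \cap V(\ell_2)\bigr)$, placing at least six points on the conic $V(Q)$. For the converse, if $p_1, \ldots, p_6$ lie on a conic $Q$ and $\ell_1, \ell_2$ span the pencil of linear forms vanishing at $p_7$, then $Q\ell_1, Q\ell_2 \in V$, and extending to a basis $\{Q\ell_1, Q\ell_2, F\}$ of $V$ shows that $(\ell_2, -\ell_1, 0)$ is a linear syzygy whose entries span only the two-dimensional subspace $K\ell_1 + K\ell_2$ of $S_1$.

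To conclude, if $Z$ is an eigenconfiguration then the syzygy $(x, y, z)$ coming from the $\psi_i$ has linearly independent coordinates, forcing no six points on a conic. Conversely, given no six on a conic, the unique linear syzygy $(\ell_1, \ell_2, \ell_3)$ has linearly independent coefficients, and the $K$-linear change of basis on $V$ sending $(\ell_1, \ell_2, \ell_3)$ to $(x, y, z)$ yields new generators $F_1', F_2', F_3'$ that still cut out $Z$ and satisfy $xF_1' + yF_2' + zF_3' = 0$; the Koszul reconstruction of the first paragraph then produces the required tensor. I expect the main obstacle to lie in verifying the Hilbert--Burch resolution under the exact hypothesis ``no six on a conic'' (ruling out hidden degenerations such as three collinear points that might shift Betti numbers), together with handling the case when the conic $Q$ appearing in the key lemma is reducible, where unique factorization must be invoked prime-by-prime.
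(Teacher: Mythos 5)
Your overall strategy mirrors the paper's: characterize eigenconfigurations via the Hilbert--Burch matrix (this is exactly Proposition~\ref{prop:asrequired}), reduce the question to whether a linear syzygy on the cubics in $I_Z$ has linearly independent coefficients, and translate that into the ``no six on a conic'' condition. Your only-if direction (if the coefficients $\ell_1,\ell_2,\ell_3$ of a linear syzygy are dependent, then six of the points lie on a conic, via the factorization $F_1=\ell_2 Q$, $F_2=-\ell_1 Q$) is correct and is essentially the paper's argument.

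The gap is in the if direction, and it sits exactly where you say you ``expect the main obstacle to lie.'' You assert that ``a short Hilbert-function computation'' shows $I_Z$ is generated in degree three with resolution $0 \to S(-4)\oplus S(-5)\to S(-3)^3\to I_Z\to 0$ whenever the seven points are distinct and ``at most six'' lie on a conic. This is false under that hypothesis, and even under the correct hypothesis ``no six on a conic'' it is the substantive content of the theorem, not a routine computation. For seven points not all on a conic the minimal free resolution can instead be $0\to S(-4)^{2}\oplus S(-5)\to S(-3)^{3}\oplus S(-4)\to I_Z\to 0$: the ideal then has a degree-four minimal generator, there are \emph{two} independent linear syzygies among the three cubic generators, and those cubics cut out a curve rather than $Z$. (Concretely, the paper shows those three cubics all share a common linear factor $L$, so $\{F_1=F_2=F_3=0\}$ contains the line $\{L=0\}$.) In that situation your ``unique linear syzygy'' language breaks down, and the Koszul reconstruction produces cubics that do \emph{not} cut out $Z$ scheme-theoretically, so no tensor results. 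The paper rules this case out by a careful Betti-number analysis using \cite[Corollaries 3.9, 3.10]{Eis}: it shows the only two possible resolution shapes, and that the second one forces $\{L=Q=0\}$ to be four collinear points of $Z$, so that adding a line $L'$ through two of the remaining three points produces a (degenerate) conic $LL'$ through six of the seven. You would need to reproduce some version of this argument; without it, the claim that ``no six on a conic'' forces the $3\times 2$ Hilbert--Burch shape is unsupported.
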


The only-if part of this theorem appears also in \cite[Proposition~2.1]{OS}, 
where Ottaviani and Sernesi studied the degree $54$ hypersurface 
of all {\it L\"uroth quartics} in~$\mathbb{P}^2$.
We note that part (i) in  \cite[Proposition~2.1]{OS} is not quite correct.
A counterexample is the configuration $Z$ consisting of
four points on a line and three other general points. It is precisely this
gap that makes our proof of Theorem~\ref{thm:nosixonconic} a bit  lengthy.

This proof will be presented later in this section.
Example~\ref{ex:cremona} shows that some triples among
the seven eigenpoints  in $\mathbb{P}^2$ can be collinear.
Another interesting point is that being an
eigenconfiguration is not a closed condition. For  a general $d$ it makes sense
to pass to the Zariski closure. We define ${\rm Eig}_d$ to be the closure in $(\mathbb{P}^2)^{d^2-d+1}$
of the set of all eigenconfigurations. Readers from algebraic geometry may prefer
unlabeled configurations, and they  would take the closure 
in the Chow variety ${\rm Sym}_{d^2-d+1}(\mathbb{P}^2)$
or in the Hilbert scheme ${\rm Hilb}_{d^2-d+1}(\mathbb{P}^2)$.
 For simplicity of exposition, we work
in the space of labeled point configurations.
We also consider the {\em variety of symmetric eigenconfigurations},  denoted
${\rm Eig}_{d,{\rm sym}}$. This is the Zariski closure  in $(\mathbb{P}^2)^{d^2-d+1}$ of
the set of eigenconfigurations of ternary forms $\phi$ of degree~$d$.
Towards the end of this section we examine the dimensions of ${\rm Eig}_d$ 
and  ${\rm Eig}_{d,{\rm sym}}$.

\smallskip

We begin by approaching our problem with a pinch of
commutative algebra. 
Let $Z \in (\mathbb{P}^2)^{d^2-d+1}$ and write
$I_Z$ for the ideal of all polynomials in $S = K[x,y,z]$
that vanish 
at all points in the configuration $Z$. 
This homogeneous radical ideal is Cohen-Macaulay because it has a free resolution of length $1$ (see, for example, \cite[Proposition 3.1]{Eis}). By the Hilbert-Burch Theorem,
the minimal free resolution of $I_Z$  has the form
$$ 0 \,\rightarrow \,S^{\oplus (m-1)} \,
\stackrel{\varPhi}{\longrightarrow}
\, S^{\oplus m} \,\rightarrow\, I_Z \,\rightarrow \, 0. $$
The $m \times (m{-}1)$-matrix $\varPhi$ is the {\em Hilbert-Burch matrix} of $Z$.
The minimal free resolution of $I_Z$ is unique up to 
change of bases in the graded $S$-modules.
 In that sense, we write $\varPhi_Z := \varPhi$.
The ideal $I_Z$ is generated by the maximal minors of~$\varPhi_Z$. The following proposition is due to Ottaviani and Sernesi (see~\cite[Proposition~2.1]{OS}). 

\begin{proposition} \label{prop:asrequired}
Let $Z$ be a configuration in $ (\mathbb{P}^2)^{d^2-d+1}$. Then $Z$
is the eigenconfiguration of a tensor if and only if its Hilbert-Burch matrix has the form
\begin{equation}
\label{eq:asrequired}
\varPhi_Z \,\,= \,
\left(
\begin{array}{cc}
L_1 & F_1 \\
L_2 & F_2 \\
L_3 & F_3 \\
\end{array} \right),
\end{equation}
where 
$L_1,L_2,L_3$ are linear forms that are linearly independent over $K$.
\end{proposition}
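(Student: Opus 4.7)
The plan is to recognize the $3 \times 2$ matrix built from the eigenvector conditions as the Hilbert-Burch matrix of the eigenconfiguration, and conversely to extract the polynomial map of a tensor directly from a Hilbert-Burch matrix of the required shape.

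First I will handle the \emph{if} direction, which is the easier one. Suppose that
\[
\varPhi_Z \;=\;
\begin{pmatrix} L_1 & F_1 \\ L_2 & F_2 \\ L_3 & F_3 \end{pmatrix}
\]
is a Hilbert-Burch matrix for $Z$ with $L_1,L_2,L_3$ linearly independent over $K$. Because $I_Z$ is generated by the $2 \times 2$ minors of $\varPhi_Z$, the degrees force $\deg F_i = d-1$ for the count $|Z| = d^2 - d + 1$ to come out right (the three generators of $I_Z$ have degree $d$). A linear change of coordinates on $\mathbb{P}^2$ replaces $L_1,L_2,L_3$ by $x_1,x_2,x_3$, and the three maximal minors of $\varPhi_Z$ become exactly the $2 \times 2$ minors of
\[
\begin{pmatrix} x_1 & x_2 & x_3 \\ F_1 & F_2 & F_3 \end{pmatrix}.
\]
By the observation in Section~1 that every triple of homogeneous polynomials of degree $d-1$ in three variables can be realized as $(\psi_1,\psi_2,\psi_3)$ for some tensor $A \in (K^3)^{\otimes d}$, we pick such an $A$ with $\psi_i = F_i$. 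Then $Z$ is cut out by (\ref{eq:rank1}), so $Z$ is the eigenconfiguration of $A$.

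For the \emph{only if} direction, suppose $Z$ is the eigenconfiguration of a tensor $A$, with associated polynomial map $(\psi_1,\psi_2,\psi_3)$. The ideal of the eigenscheme is generated by the three $2 \times 2$ minors of (\ref{eq:rank1}), and these are the maximal minors of the $3 \times 2$ matrix
\[
M \;=\; \begin{pmatrix} x_1 & \psi_1 \\ x_2 & \psi_2 \\ x_3 & \psi_3 \end{pmatrix}.
\]
The columns of $M$ involve the linearly independent forms $x_1,x_2,x_3$ in the first column and forms of degree $d-1$ in the second column. I would then argue that $M$ presents $I_Z$ as an $S$-module, i.e.\ that the Eagon-Northcott (equivalently Hilbert-Burch) sequence
\[
0 \longrightarrow S(-1) \oplus S(-d+1) \stackrel{M}{\longrightarrow} S^{\oplus 3} \longrightarrow I_Z(d) \longrightarrow 0
\]
is exact, and is in fact the \emph{minimal} free resolution of $I_Z$. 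Exactness follows from the Hilbert-Burch theorem once I verify that the codimension of the ideal of minors is $2$, which holds because the zero scheme is zero-dimensional in $\mathbb{P}^2$ and nonempty; minimality follows from the fact that all entries of $M$ lie in the maximal ideal $(x_1,x_2,x_3)$ and no row or column can be reduced (the $L_i$-column cannot shrink because $x_1,x_2,x_3$ are a basis for the degree-$1$ part of $S$). Since the minimal free resolution is unique up to change of bases in the source and target, $\varPhi_Z$ has, up to such a change, precisely the form (\ref{eq:asrequired}), with $L_1,L_2,L_3$ a basis of the linear forms.

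The step I expect to be the main obstacle is the \emph{only if} argument for possibly non-generic tensors. A priori the ideal of $2 \times 2$ minors could fail to be radical, so one must distinguish the eigenscheme from its reduced subscheme $Z$. The cleanest way around this is to argue under the assumption (built into the statement, since $Z$ is being treated as a reduced configuration of $d^2-d+1$ points) that the eigenscheme is already reduced, or alternatively to set up the argument scheme-theoretically for the eigenscheme $\mathrm{Eig}(A)$ rather than for its support. A careful check of the Hilbert function then guarantees that $M$ supplies the Hilbert-Burch matrix on the nose; the linear independence of $L_1,L_2,L_3$ simply records that $x_1,x_2,x_3$ are linearly independent in $S_1$.
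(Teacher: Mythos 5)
Your proposal follows the same route as the paper: recognize the $3\times 2$ matrix $M$ built from $(x_1,x_2,x_3)$ and $(\psi_1,\psi_2,\psi_3)$ as the Hilbert-Burch matrix of $I_Z$, and conversely use row operations to turn a Hilbert-Burch matrix of the indicated shape back into such an $M$. The worry you flag at the end, that the eigenscheme ideal might not equal $I_Z$, is the one substantive point left informal, and it is exactly where the paper puts its weight. The resolution is a degree count, not just a vague Hilbert function check: the ideal $J$ of $2\times 2$-minors of $M$ is Cohen--Macaulay of codimension $2$ by Hilbert--Burch, hence unmixed, and its degree is $d^2-d+1$ (by Thom--Porteous, or equivalently by Theorem~\ref{thm:a}). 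Since $J\subseteq I_Z$, and by hypothesis $Z$ is a configuration of exactly $d^2-d+1$ distinct points so $I_Z$ is also unmixed of codimension $2$ and degree $d^2-d+1$, the containment forces $J=I_Z$; in particular the eigenscheme is automatically reduced under the standing assumption on $Z$, so the dichotomy you raise does not arise. One small further remark on the \emph{if} direction: replacing $L_1,L_2,L_3$ by $x_1,x_2,x_3$ should be done by row operations over $K$ on the Hilbert--Burch matrix (a change of basis of the free target module $S^{\oplus 3}$), which leaves $I_Z$ and hence $Z$ unchanged; a genuine linear change of coordinates on $\mathbb{P}^2$ would move $Z$ to $g(Z)$, and while the class of eigenconfigurations of general tensors is $GL_3$-invariant so the conclusion is still true, the row-operation phrasing avoids having to invoke that invariance.
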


This statement makes sense because the condition on $\varPhi_Z$
is invariant under row operations over $K$. 
 The ternary forms $F_1,F_2,F_3$ must all have the same degree, and 
the hypothesis on $Z$ ensures that this common degree is $d-1$.

\begin{proof}
We start with the only-if direction. Suppose that $Z$ is an eigenconfiguration.
Then there exist ternary forms $\psi_1, \psi_2, \psi_3$ of degree $d-1$ such that
$Z$ is defined set-theoretically by the $2 \times 2$-minors of
\begin{equation}
\label{eq:twobythree}
 \begin{pmatrix} x & y & z \\  
\psi_1(x,y,z) & \psi_2(x,y,z) & \psi_3(x,y,z) \\ 
\end{pmatrix} .
\end{equation}
The ideal generated by these minors is Cohen-Macaulay of codimension $2$
and its degree equals the cardinality of $Z$. This implies
that this ideal coincides with $I_Z$. The Hilbert-Burch Theorem ensures
that the transpose of (\ref{eq:twobythree}) equals $\varPhi_Z$.
Since $x,y,z$ are linearly independent, we see that $\varPhi_Z$ has the form
required in~(\ref{eq:asrequired}).

For the converse, suppose that the Hilbert-Burch matrix of $Z$ has size
$3 \times 2$ as in (\ref{eq:asrequired}) with $L_1,L_2,L_3$ linearly independent.
By performing row operations over $K$, we can replace $L_1,L_2,L_3$ by
$x,y,z$. This means that the transpose of $\varPhi_Z$ is (\ref{eq:twobythree})
for some $\psi_1,\psi_2,\psi_3$. Any such triple of ternary forms of degree $d-1$
arises from some tensor $A \in (K^3)^{\otimes d}$. By construction, $Z$ is the
eigenconfiguration of~$A$.
\end{proof}

Proposition \ref{prop:asrequired} translates into an algorithm for testing whether a given 
$Z \in (\mathbb{P}^2)^{d^2-d+1}$ is an eigenconfiguration. The algorithm
starts by computing the ideal
$$ I_Z \,\,\,\,= \bigcap_{(\alpha:\beta:\gamma) \in Z}  \!\! \bigl\langle\,
x \beta- y \alpha\,, \,
x \gamma - z \alpha\,, \,
y \gamma - z \beta \,\big\rangle. $$
This ideal must have three minimal generators of degree $d$; otherwise $Z$ is not
an eigenconfiguration. If $I_Z$ has three  generators, then we compute the
two syzygies. They must have degrees $1$ and $d-1$, so 
the minimal free resolution of $I_Z$ looks like
$$  
0 \,\rightarrow \,S(-d-1) \oplus S(-2d+1) \,
\stackrel{\varPhi}{\longrightarrow}  \,S(-d)^{\oplus 3} \rightarrow \,I_Z \,\rightarrow \,0. $$
At this point we examine the matrix $\varPhi$. If the linear entries $L_1,L_2,L_3$ in the left column
are linearly dependent, then $Z$ is not an eigenconfiguration. Otherwise we perform row operations
so that $\varPhi^T$ looks like (\ref{eq:twobythree}). The last step is to pick
a tensor $A \in (K^3)^{\otimes d}$ that gives rise to the ternary forms $\psi_1,\psi_2,\psi_3$
in the second row of $\varPhi^T$.

The remaining task is to find a geometric interpretation of the criterion
in Proposition \ref{prop:asrequired}. This was given for $d=3$ in the result 
whose proof we now present.

\begin{proof}[Proof of Theorem~\ref{thm:nosixonconic}]
Fix  a configuration $Z \in (\mathbb{P}^2)^7$. Our claim states that the
Hilbert-Burch matrix $\varPhi_Z$ has format $3 \times 2$ as in (\ref{eq:asrequired}), with
$L_1,L_2,L_3$ linearly independent, if and only if no six of the points in $Z$ lie on a conic.

We begin with the
 only-if direction.
Take $p \in Z$ such that $Z \backslash \{p\}$ lies on
a conic $C$ in $\mathbb{P}^2$. Fix linear forms $L_1$ and $L_2$ that cut out $p$.
The cubics $CL_1$ and $CL_2$ vanish on $Z$. By Proposition~\ref{prop:asrequired},
we have $I_Z = \langle CL_1, CL_2,F \rangle $ where $F$ is another cubic.
Since $F$ vanishes at $p$, there exist quadrics $Q_1$ and $Q_2$ such that
  $F = Q_2 L_1 - Q_1 L_2$. 
  The generators of the  ideal $I_Z$ are the $2 \times 2$-minors of 
  \[ \varPsi \,= \, \left(
\begin{array}{cc}
L_1 & Q_1  \\
L_2 & Q_2  \\
0 & C 
\end{array}
\right).
\]
This means that $\varPsi$ is a Hilbert-Burch matrix $\varPhi_Z$ for $Z$. 
However, by Proposition~\ref{prop:asrequired}, the left column in
any $\varPhi_Z$ must consist of linearly independent linear forms.
This is a contradiction, which completes the proof of the only-if direction. 

We now establish the
 if direction.
 Fix any
 configuration $Z \in (\mathbb{P}^2)^7$ 
 of seven points that do not lie
   on a conic.
We first prove that the minimal free resolution of $I_Z$ has the 
following form, 
where $c $ is either $0$ or $1$:
\begin{equation}
\label{eq:mfr}
0 \,\rightarrow\, S(-4)^{\oplus (c+1)}\oplus S(-5) \,\rightarrow \, S(-3)^{\oplus 3}\oplus S(-4)^{\oplus c} 
\,\rightarrow \,I_Z \,\rightarrow \,0,.
\end{equation}

By the Hilbert-Burch Theorem, the resolution of $I_Z$ equals
\[
0 \,\rightarrow\, \bigoplus_{i=1}^t S(-b_i) \,
\stackrel{\varPhi_Z}{\longrightarrow}
 \,\bigoplus _{i=1}^{t+1} 
S(-a_i) \,\rightarrow\, I_Z \,\rightarrow \, 0, 
\]
where $t, a_1, \ldots, a_{t+1}, b_1, \dots, b_t
\in \mathbb{N}$ with $a_1 \geq \cdots \geq a_{t+1}$ and $b_1 \geq \cdots \geq b_t$.
We abbreviate
$e_i = b_i-a_i$ and $f_i = b_i-a_{i+1}$ for  $i \in \{1, \dots, t\}$. These invariants satisfy
\begin{itemize}
 \item[(i)] $f_i \geq e_i, e_{i+1}$, 
 \item[(ii)] $e_i, f_i \geq 1$.  
\end{itemize} 
Furthermore, Eisenbud shows in  \cite[Proposition 3.8]{Eis} that
\begin{itemize}
 \item[(iii)] $a_i = \sum_{j=1}^{i-1} e_j + \sum_{j=i}^t f_j$. 
\end{itemize}
There exist $3$
linearly independent cubics that vanish on the seven points in $Z$.
By \cite[Corollary~3.9]{Eis}, the ideal $I_Z$
has either $3 $ or $4$ minimal generators, so $t \in \{2,3\}$.

Suppose $t=2$. Then $a_1 = a_2 = a_3 =3$, and it follows from (iii) that 
$$ f_1+f_2  =  3 \quad {\rm and} \quad e_1 + f_2 = 3. $$
So, by (i) and (ii), we obtain $e_1 = f_1 = 1$ and $f_2 = 2$. This implies $b_1=5$ and $b_2 =4$.  Therefore, $I_Z$ has a minimal free resolution of type~(\ref{eq:mfr}) with $c = 0$. 

Next, suppose $t=3$. Then $a_1 \geq a_2  = a_3 = a_4 = 3$. From (iii) we now get 
\[
\begin{array}{lllllll}
f_1& + & f_2 & + & f_3 & = & a_1 \\
e_1& + & f_2 & + & f_3 & = &  3\\
e_1& + & e_2 & + & f_3 & = & 3. 
\end{array}
\]
By (ii), $e_1 = e_2 = f_2 = f_3 = 1$. Therefore, $b_2 = a_2+e_2 = 4$ and $b_3 = a_2+f_2= 4$. 
Corollary 3.10 in~\cite{Eis} says that 
\[ \sum_{i\leq j}e_if_j \,=\, e_1(f_1+f_2+f_3)+e_2(f_2+f_3)+e_3f_3 \,=\, a_1+3 \,=\, \deg Z\, =\, 7. \]
Hence $a_1 =4$, $b_1 = 5$, and $I_Z$ has a minimal free resolution of type~(\ref{eq:mfr}) with $c = 1$. 

To complete the proof, we now assume that
no six points of $Z$ lie on a conic. In particular,
no conic contains $Z$, so the minimal free resolution of $I_Z$
equals (\ref{eq:mfr}), with $c \in \{0,1\}$.
Suppose that $c=1$. The Hilbert-Burch matrix must be 
\[
\varPhi_Z \,\,=\,\, 
\begin{small} \left(
\begin{array}{ccc}
L_{00} & L_{01}  & Q_0 \\
L_{10} & L_{11}  & Q_1 \\
L_{20} & L_{21}  & Q_2 \\
0 & 0 &  L 
\end{array}
\right) \end{small}
\]
with $L,L_{ij}$ are linear and $Q_k$ are quadrics.
Then $\,I_Z =  \bigl\langle LQ_0', LQ_1', LQ_2',Q \bigr\rangle$, where
\[
Q_0' = 
\left|
\begin{matrix}
L_{00} & L_{01} \\
L_{10} & L_{11} 
\end{matrix} \right|,
Q_1' =  \left| 
\begin{matrix}
L_{00} & L_{01} \\
L_{20} & L_{21} 
\end{matrix}
\right|, \ 
Q_2' =  \left| 
\begin{matrix}
L_{10} & L_{11} \\
L_{20} & L_{21} 
\end{matrix}
\right|, \ 
Q = \begin{small} \left|
\begin{matrix}
L_{00} & L_{01}  & Q_0 \\
L_{10} & L_{11}  & Q_1 \\
L_{20} & L_{21}  & Q_2 
\end{matrix}
\right|. \end{small}
\]

The ideal generated by $L$ and $Q$ contains $I_Z$. The intersection of 
the curves $\{L \! = \! 0\}$ and $\{Q \! = \! 0\}$ is contained in $Z$. Note 
that
these curves share no positive-dimensional component, since $Z$ is zero-dimensional. 
Thus  $\{L= Q=0\}$ consists of four points. Let $L'$ be a linear form vanishing on
 two of the three other points.  Then the conic
  $\{LL' = 0\}$ contains six points of $Z$, which contradicts our assumption.

 Hence, $c=0$. The resolution (\ref{eq:mfr}) tells us that
 the Hilbert-Burch matrix equals
 \[
\varPhi_Z \,\, =\,\, \left(
\begin{array}{cc}
L_1   & Q_1 \\
L_2   & Q_2 \\
L_3   & Q_3 \\
\end{array}
\right), 
\]
with linear forms $L_i$ and conics $Q_j$. If $L_1,L_2,L_3$ were linearly dependent
then we can take $L_3 = 0$. So, the conic $Q_3$ 
contains the six points in $Z \backslash \{L_1= L_2=0\}$.
Consequently, the linear forms $L_1,L_2,L_3$ must be linearly independent.
Proposition~\ref{prop:asrequired} now implies that $Z$ is the eigenconfiguration
of some $3 \times 3 \times 3$-tensor.
\end{proof}

After taking the Zariski closure, we have ${\rm Eig}_3 = (\mathbb{P}^2)^7$. 
We shall now discuss the subvariety ${\rm Eig}_{3,{\rm sym}} $
of those eigenconfigurations that come from symmetric tensors. Consider the
three quadrics in the second row of 
(\ref{eq:twobythree}). We write these as
$$ \begin{matrix}
\psi_1({\bf x}) &=&  a_1  x^2 + a_2  x y  + a_3 x z + a_4 y^2  + a_5 y z + a_6 z^2, \\
\psi_2({\bf x}) &=&  b_1  x^2 + b_2  x y  + b_3 x z + b_4 y^2  + b_5 y z + b_6 z^2,\\
\psi_3({\bf x}) &=&  c_1  x^2 + c_2  x y  + c_3 x z + c_4 y^2  + c_5 y z + c_6 z^2 .
\end{matrix}
$$
We shall characterize the case of symmetric tensors in terms of these coefficients.

\begin{proposition} \label{prop:fiveeqns}
The variety ${\rm Eig}_{3,{\rm sym}} $ is irreducible of dimension $9$ in $(\mathbb{P}^2)^7$.
An eigenconfiguration $Z$ comes from
a symmetric tensor $\phi$ as in (\ref{eq:rank1sym}) if and only~if
$$ a_5-b_3 =  b_3-c_2 = 
 2 a_4-2 a_6-b_2+c_3 = 
 2b_6-2 b_1-c_5+a_2 = 
 2 c_1-2c_4-a_3+b_5  = 0.$$
\end{proposition}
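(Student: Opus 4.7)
The plan is to reduce both parts of the statement to invariant-theoretic linear algebra on the coefficients $a_i, b_i, c_i$ of the ternary quadrics $\psi_1, \psi_2, \psi_3$ produced by Proposition~\ref{prop:asrequired}. First I would determine the residual ambiguity in the Hilbert-Burch normal form once the first row of $\varPhi^T$ has been fixed to $(x,y,z)$. Reading off the graded automorphisms of $S(-4) \oplus S(-5)$, this ambiguity is exactly the $4$-parameter action
$$ (\psi_1,\psi_2,\psi_3) \;\longmapsto\; (\lambda\psi_1 + \mu x,\; \lambda\psi_2 + \mu y,\; \lambda\psi_3 + \mu z), $$
where $\lambda \in K^*$ and $\mu = \mu_1 x + \mu_2 y + \mu_3 z$ is an arbitrary linear form. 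Any condition on $Z$ must descend through this action.

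The triple $(\psi_1,\psi_2,\psi_3)$ comes from a symmetric tensor exactly when it equals $\nabla \phi$ for some cubic $\phi$, which is the cross-partial condition $\partial\psi_i/\partial x_j = \partial\psi_j/\partial x_i$ for $i<j$. Expanding in the monomial basis yields nine linear equations in the eighteen coefficients; noting that $a_5 = b_3$ together with $a_5 = c_2$ implies $b_3 = c_2$ leaves eight independent equations. Since $a_5, b_3, c_2$ are all fixed by the $\mu$-action, the equations $a_5 - b_3 = 0$ and $b_3 - c_2 = 0$ descend directly as invariants, giving the first two listed equations. The remaining six equations each shift by one of $\pm\mu_1, \pm\mu_2, \pm\mu_3$; tabulating these shifts in a $6 \times 3$ variation matrix, I expect its left kernel to be three-dimensional and spanned by combinations matching exactly the other three equations in the proposition. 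For the converse direction, if all five invariants vanish then the three non-invariant gradient conditions form a consistent linear system that solves uniquely for $(\mu_1,\mu_2,\mu_3)$, and the shifted triple $(\psi_i + \mu x_i)$ is then $\nabla \phi$ for some cubic $\phi$, providing the required symmetric tensor.

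For dimension and irreducibility I would consider the rational map
$$ \mathbb{P}^9 \;\dashrightarrow\; (\mathbb{P}^2)^7, \qquad \phi \;\longmapsto\; Z_\phi, $$
sending a ternary cubic to its eigenconfiguration; its image closure is $\mathrm{Eig}_{3,\mathrm{sym}}$ by definition. A short repetition of the curl calculation shows that the only linear form $\mu$ with $\mu x_i$ in the image of the gradient is $\mu = 0$, so $\phi$ is recovered from its gradient triple up to scalar. Hence the map is generically injective, therefore birational onto its image, and $\mathrm{Eig}_{3,\mathrm{sym}}$ is irreducible of dimension $9$. The main obstacle will be the coordinate bookkeeping in the invariance computation; everything else is routine once the $\mu$-action on each coefficient is tabulated.
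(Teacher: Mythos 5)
Your proposal is correct and essentially reproduces the paper's argument: the paper likewise writes the symmetric condition as $\psi_i + Lx_i = \partial\phi/\partial x_i$, eliminates $\phi$ by cross-partials, and then eliminates $L = u_1x + u_2y + u_3z$ — which is exactly your curl condition followed by passing to $\mu$-invariants. The only difference is that the paper defers the dimension and irreducibility claim to a later general-$d$ theorem (parametrizing ${\rm Eig}_{d,\rm sym}$ as a quotient of the open subset of gradients by scaling), whereas you prove it in place by the same generic-injectivity argument; both come to the same thing.
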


\begin{proof}
There exists a symmetric tensor with eigenconfiguration $Z$ if and only if
there exist a cubic $\phi$ and a 
linear form $L = u_1 x + u_2 y + u_3 z$
such that 
\begin{equation}
\label{eq:3lineqns}
\psi_1 + L x = \frac{\partial \phi}{\partial x} ,\,\,\,
\psi_2  + L y = \frac{\partial \phi}{\partial y}  ,\,\,\,
\psi_3 + L z = \frac{\partial \phi}{\partial z} .
\end{equation}
We eliminate the cubic $\phi$ from this system by 
taking crosswise partial derivatives:
$$ 
\frac{ \partial \psi_1 }{\partial y} \,+\, x \frac{\partial L }{\partial y} 
\, \,= \,\, 
\frac{ \partial \psi_2 }{\partial x} \,+\, y \frac{\partial L }{\partial x}\,,
\, \ldots\,,
\frac{ \partial \psi_2 }{\partial z} \,+\, y \frac{\partial L }{\partial z} 
\, \,= \,\, 
\frac{ \partial \psi_3 }{\partial y} \,+\, z \frac{\partial L }{\partial x}.
$$
This is a system of linear equations in the $21$ unknowns
$a_1,\ldots,a_6,b_1,\ldots,b_6,c_1,\ldots,$ $c_6, u_1,u_2,u_3$.
By eliminating the last three unknowns $u_1,u_2,u_3$ from that system, we arrive
at the five linearly independent equations in $a_i,b_j,c_k$ stated above.
\end{proof}

Proposition~\ref{prop:fiveeqns} translates into an algorithm
for testing whether a given configuration $Z$ is the eigenconfiguration
of a ternary cubic $\phi$. Namely, we compute the syzygies of $I_Z$,
we check that the Hilbert-Burch matrix has the  form
(\ref{eq:twobythree}), and then we check the five linear equations.
If these hold then $\phi$ is found by solving~(\ref{eq:3lineqns}).

While the equations in Proposition \ref{prop:fiveeqns} are linear,
we did not succeed in computing the prime ideal of 
${\rm Eig}_{3,{\rm sym}} $ in the homogeneous coordinate ring of
 $(\mathbb{P}^2)^7$. This is a challenging elimination problem.
 Some insight can be gained by intersecting
 ${\rm Eig}_{3,{\rm sym}}$ with natural  subfamilies of $(\mathbb{P}^2)^7$.
For instance, assume that $Z$ contains
the three coordinate points, so we restrict to the 
subspace $(\mathbb{P}^2)^4$ defined~by
$$ Z = \bigl\{(1 \!:\! 0 \!:\! 0),(0 \!:\! 1 \!: \! 0),(0 \!:\! 0\! : \! 1),
(\alpha_1\!:\alpha_2 \!: \! \alpha_3),
(\beta_1 \! : \! \beta_2 \! : \! \beta_3),
(\gamma_1 \! : \! \gamma_2 \!: \! \gamma_3),
(\delta_1 \! : \! \delta_2 \! : \! \delta_3) \bigr\}. $$
At this point it is important to recall that our problem is not
projectively invariant.

\begin{theorem}
The variety  ${\rm Eig}_{3,{\rm sym}} \cap  (\mathbb{P}^2)^4$
is three-dimensional, and it represents the 
eigenconfigurations $Z$ of the ternary cubics in the Hesse family
\begin{equation}
\label{eq:hessefamily}
 \phi \,=\, ax^3 + by^3 + c z^3 + 3d x y z . 
 \end{equation}
If $a,b,c,d$ are real then the eigenconfiguration $Z$ contains
at least five real points.
\end{theorem}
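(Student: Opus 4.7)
My plan proceeds in three steps, the last being the substantive part.

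\textbf{Step 1 (Hesse characterization).} A symmetric ternary cubic $\phi = \sum_{i+j+k=3} c_{ijk} x^i y^j z^k$ has $(1{:}0{:}0)$ as an eigenpoint iff $\nabla\phi(1,0,0) = (3c_{300}, c_{210}, c_{201})$ is proportional to $(1,0,0)$, forcing $c_{210} = c_{201} = 0$. Imposing the analogous conditions at $(0{:}1{:}0)$ and $(0{:}0{:}1)$ kills the six mixed coefficients $c_{210}, c_{201}, c_{120}, c_{021}, c_{102}, c_{012}$ and leaves exactly the Hesse form $\phi = ax^3 + by^3 + cz^3 + 3dxyz$. Hence $\mathrm{Eig}_{3,\mathrm{sym}} \cap (\mathbb{P}^2)^4$ is the Zariski closure of the image of the parameter space $\mathbb{P}^3 = \{[a{:}b{:}c{:}d]\}$ under the map sending $\phi$ to its four non-coordinate eigenpoints.

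\textbf{Step 2 (Dimension three).} By Proposition~\ref{prop:asrequired}, the seven-point eigenconfiguration recovers the Hilbert--Burch matrix and hence $\psi_1, \psi_2, \psi_3$ up to row operations; the symmetry of $\phi$ then determines $\phi$ uniquely up to scalar. Generic Hesse cubics satisfy the hypothesis of Theorem~\ref{thm:nosixonconic} (no six eigenpoints on a conic), so the map $\mathbb{P}^3 \dashrightarrow (\mathbb{P}^2)^4$ is generically injective, making the image three-dimensional.

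\textbf{Step 3 (Reality).} In the affine chart $x = 1$ (and with $d \neq 0$), solving the first rank-one minor for $z = y(a - by)/(d(1 - y^2))$ and substituting into the remaining minor reduces the problem to finding real roots of the quartic
\[
T(y) \,=\, d(d^2 - b^2) y^4 + (abd + b^2 c) y^3 - 2(d^3 + abc) y^2 + (abd + a^2 c) y + d(d^2 - a^2).
\]
Since a real quartic has an even number of real roots, a single real root yields at least two. I plan to evaluate $T$ at the probes $y \in \{0,\, a/b,\, 1,\, -1,\, \infty\}$, where direct expansion gives the clean identities
\[
T(0) = d(d^2 - a^2),\qquad T(a/b) = d^3(a^2 - b^2)^2/b^4,
\]
\[
T(1) = (c - d)(a - b)^2,\qquad T(-1) = -(c + d)(a + b)^2,
\]
together with leading coefficient $d(d^2 - b^2)$. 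A case analysis on the magnitudes of $|a|, |b|, |c|$ relative to $|d|$ then shows that in every regime at least two of these probes have opposite signs, so the intermediate value theorem produces a real root. The resulting two real non-coordinate eigenpoints, combined with the three real coordinate points, give the asserted five real points of $Z$.

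The main obstacle is the case analysis in Step~3. The identity $T(a/b) = d^3(a^2 - b^2)^2/b^4$---whose sign equals that of $d$ whenever $a \neq \pm b$---is the key asymmetry that breaks the ``all probes same sign'' scenario: if any of $|a|, |b|$ exceeds $|d|$ then $T(a/b)$ disagrees in sign with $T(0)$ or with the leading coefficient. The remaining regime where all of $|a|, |b|, |c|$ are at most $|d|$ is handled using $T(\pm 1)$, which then acquire the opposite sign to $T(0)$. Degenerate boundaries such as $a = \pm b$ or $|a| = |d|$ are treated by continuity or by noting that $T$ has an obvious real zero; the case $d = 0$ is immediate, as $\phi$ then has monomial gradient and all seven eigenpoints are visibly real.
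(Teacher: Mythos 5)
Your proof follows the paper in Step 1 but takes a genuinely different route in Steps 2 and 3, and the differences are worth spelling out.

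\emph{Step 1} is identical to the paper: eliminating the mixed coefficients $c_{210}=c_{201}=c_{120}=c_{021}=c_{102}=c_{012}$ to land in the Hesse family is exactly the paper's argument.

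\emph{Step 2} differs. The paper gets $\dim\bigl(\mathrm{Eig}_{3,\mathrm{sym}}\cap(\mathbb{P}^2)^4\bigr)=3$ directly from the codimension bound: $\mathrm{Eig}_{3,\mathrm{sym}}$ has codimension $5$ in $(\mathbb{P}^2)^7$, so the intersection has codimension $\le 5$, hence dimension $\ge 3$; the Hesse family is $3$-dimensional, so equality holds. You instead argue generic injectivity of the map $\mathbb{P}^3\dashrightarrow(\mathbb{P}^2)^4$ via Hilbert--Burch plus Theorem~\ref{thm:nosixonconic}. That works, but it quietly assumes that a generic Hesse cubic has no six eigenpoints on a conic, which you state without proof. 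The paper's route is both shorter and free of that extra verification, so you should either cite the codimension argument or justify the "no six on a conic" claim.

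\emph{Step 3} is where you diverge most. The paper parametrizes the four non-coordinate eigenpoints by a quartic in $\chi=x/z$ with coefficients $d(a^2-d^2),-a(ab+cd),2(abc+d^3),-c(bc+ad),d(c^2-d^2)$, observes that this family lies on the quadric $\mathcal{S}=2k_4k_2+k_2^2-4k_1k_3+4k_0k_4+2k_0k_2$ in the $\mathbb{P}^4$ of quartics, and rules out the empty-real-root stratum by a sum-of-squares certificate computed with \texttt{SOSTools} (floating-point coefficients). You work instead in the chart $x=1$, eliminate $z$, and land on the different but equivalent quartic $T(y)=d(d^2-b^2)y^4+(b^2c+abd)y^3-2(abc+d^3)y^2+(a^2c+abd)y+d(d^2-a^2)$; I checked your derivation and all four probe identities $T(0)=d(d^2-a^2)$, $T(1)=(c-d)(a-b)^2$, $T(-1)=-(c+d)(a+b)^2$, $T(a/b)=d^3(a^2-b^2)^2/b^4$ are correct. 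After the reduction to $d>0$ (replacing $\phi$ by $-\phi$), your case split — $|a|>|d|$ or $|b|>|d|$ handled by $T(a/b)$ against $T(0)$ or the leading coefficient, then $|a|,|b|\le|d|$ handled by noting $c\ge d$ forces $T(-1)\le 0$, $c\le -d$ forces $T(1)\le 0$, and $|c|<d$ forces both — does cover all cases and yields the needed sign change. This is a real improvement in verifiability: your argument is a finite, hand-checkable sign analysis, whereas the paper's proof ultimately rests on a numerically computed certificate. Do write out the case analysis in full rather than as a plan.

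One caveat shared by both your approach and the paper's: when $a^2=d^2$, $T(0)=0$ and the root $y=0$ corresponds to the coordinate point $(1{:}0{:}0)$ merging with a non-coordinate eigenpoint (the eigenscheme becomes non-reduced there). In that degenerate stratum the set of distinct real eigenpoints can drop to four; the asserted count of five then holds only with multiplicity, or for generic Hesse cubics. The paper does not comment on this either, so it is not a gap peculiar to your proposal, but you should flag the interpretation when you write up the degenerate boundaries you defer.
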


\begin{proof}
A ternary cubic $\phi = \sum_{i+j+k=3} c_{ijk} x^i y^j z^k $ has
$(1:0:0)$ as an eigenpoint of $\phi$ if and only if
$c_{210} = c_{201} = 0$. Likewise, $(0:1:0)$ 
is an eigenpoint if and only if $c_{120} = c_{021} = 0$,
and $(0:0:1)$ is an eigenpoint if and only if $c_{102} = c_{012} = 0$.
Hence the eigenconfiguration of $\phi$ contains all three coordinate points 
if and only if $\phi$ is in the Hesse family (\ref{eq:hessefamily}).
Since ${\rm Eig}_{3,{\rm sym}}$ has codimension $5$ in $(\mathbb{P}^2)^7$,
the intersection ${\rm Eig}_{3,{\rm sym}} \cap  (\mathbb{P}^2)^4$ has codimension
$\leq 5$, so its dimension is $\geq 3$.
The Hesse family is $3$-dimensional, and so we conclude that  
$\,{\rm dim}\bigl({\rm Eig}_{3,{\rm sym}} \cap  (\mathbb{P}^2)^4 \bigr) = 3$.

The four other eigenpoints of (\ref{eq:hessefamily}) are
 $\, \left(d\chi(\chi^2-1) : \chi(a\chi-c) : d(\chi^2-1)\right) $,
where $\chi$ runs over the zeros of the polynomial
\begin{equation}
\label{eq:quartic} 
d(a^2-d^2) \chi^4
\,-\, a(ab+cd) \chi^3
\,+\, 2(abc+d^3) \chi^2
\,-\, c(bc+ad) \chi
\,+ \, d(c^2-d^2).
\end{equation}
We claim that this quartic polynomial has at least two real roots for
all $a,b,c,d\in \mathbb{R}$.

Inside the projective space $\mathbb{P}^4$ of quartics
$\, f(\chi) =  k_4 \chi^4 + k_3 \chi^3 + k_2 \chi^2 + k_1 \chi + k_0$,
the family (\ref{eq:quartic}) is contained in the  hypersurface defined by the quadric
\[ \mathcal{S}\, =\, 2 k_4 k_2 + k_2^2 - 4 k_1 k_3 + 4 k_0 k_4 + 2 k_0 k_2 .\]
The discriminant of $f(\chi)$ defines a hypersurface of
degree $6$ in $\mathbb{P}^4$. One of the open regions in the complement
of the discriminant consists of quartics $f(\chi)$ with no real roots. 
In polynomial optimization 
(cf.~\cite[Lemma 3.3]{BPT})
one represents this region by a formula of the following form, where
$\kappa $ is a new indeterminate:
$$ f(\chi) \,\,= \,\,
\begin{pmatrix} \chi^2 & \chi & 1 \end{pmatrix} \cdot
\begin{pmatrix} 
k_0 & k_1/2 & \kappa \\
k_1 & k_2 - 2 \kappa & k_3/2 \\
\kappa & k_3/2  & k_4 \end{pmatrix}
\cdot \begin{pmatrix} \chi^2 \\ \chi \\ 1 \end{pmatrix},
$$
The symmetric $3 \times 3$-matrix is required to be
positive definite for some  $\kappa \in \mathbb{R}$.
The condition of being positive definite is
 expressed by the leading principal minors:
$$ \mathcal{P} = k_0\,,\,\,\,   \mathcal{Q} = 
{\rm det} \begin{pmatrix} 
k_0 & k_1/2 \\
k_1/2 & k_2 - 2\kappa \\
\end{pmatrix}\,,\,\,\, \mathcal{R} = {\rm det}
\begin{small}
\begin{pmatrix} 
\,k_0 & k_1/2 & \kappa \\
\,k_1/2 & k_2 - 2 \kappa & k_3/2 \\
\,\kappa & k_3/2  & k_4 \end{pmatrix} 
\end{small}. $$
It remains to be proved that there is no
solution $(k_0,k_1,k_2,k_3,k_4,\kappa) \in \mathbb{R}^6$ to
$$  \mathcal{P}  >  0, \,\, \mathcal{Q} > 0, \,\, \mathcal{R} > 0 \,\, \,\hbox{and} \,\, \mathcal{S} = 0. $$
We showed this by computing a {\em sum-of-squares proof}, in the sense of
 \cite[Chapter 3]{BPT}. More precisely, using the software {\tt SOSTools} \cite{SOS},
 we found explicit  polynomials $p,q,r,s \in \mathbb{R}[k_0,k_1,k_2,k_3,k_4,\kappa]$
 with floating point coefficients  such that 
 $$ p,q,r \,\,\,\hbox{are sums of squares} \,\,\,\,\, \hbox{and} \,\,\,\,\,
 p\mathcal{P} + q\mathcal{Q} + r\mathcal{R} + s\mathcal{S} = -1 . $$
 We are grateful to  Cynthia Vinzant for helping us with this computation.
 \end{proof}

We close this section by returning to tensors in
$(K^3)^{\otimes d}$ for general $d \geq 3$.

\begin{theorem}
Consider the spaces of eigenconfigurations of ternary tensors,
$$ {\rm Eig}_{d,{\rm sym}} \,\subset \,
{\rm Eig}_d \,\subset \, (\mathbb{P}^2)^{d^2-d+1} 
\qquad {\rm for} \,\, \, d \geq 3. $$
These projective varieties are irreducible, and their dimensions are
$$ {\rm dim}(\mathrm{Eig}_{d,{\rm sym}}) = \frac{1}{2}(d^2 + 3d)
\quad {\rm and} \quad
{\rm dim}(\mathrm{Eig}_d) = d^2+2d-1.
$$
\end{theorem}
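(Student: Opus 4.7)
The plan is to realize both varieties as closures of images of natural rational maps
\[
\mu_d \colon \mathbb{P}(S_{d-1}^{\oplus 3}) \dashrightarrow (\mathbb{P}^2)^{d^2-d+1}
\qquad \text{and} \qquad
\nu_d \colon \mathbb{P}(S_d) \dashrightarrow (\mathbb{P}^2)^{d^2-d+1},
\]
where $S = K[x,y,z]$, the map $\mu_d$ sends $(\psi_1,\psi_2,\psi_3)$ to the zero scheme of the $2{\times}2$-minors of the matrix in~(\ref{eq:twobythree}), and $\nu_d$ sends a form $\phi$ to the eigenconfiguration of its gradient. By construction, $\mathrm{Eig}_d = \overline{\mathrm{im}(\mu_d)}$ and $\mathrm{Eig}_{d,\mathrm{sym}} = \overline{\mathrm{im}(\nu_d)}$; since both domains are irreducible projective spaces, their closed images are automatically irreducible. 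This settles the irreducibility half of the theorem.

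To compute $\dim \mathrm{Eig}_d$, I would analyze the generic fiber of $\mu_d$ via the Hilbert-Burch theory used in the proof of Proposition~\ref{prop:asrequired}: two triples define the same ideal of $2\times 2$-minors iff their Hilbert-Burch matrices are related by the graded $\mathrm{GL}_2$-action preserving the first row $(x,y,z)$, i.e., iff
\[
(\psi_1',\psi_2',\psi_3') \,=\, \lambda(\psi_1,\psi_2,\psi_3) + h \cdot (x,y,z)
\quad \text{for some } \lambda \in K^*,\ h \in S_{d-2}.
\]
The projective fiber has dimension $\binom{d}{2}$, yielding
\[
\dim \mathrm{Eig}_d \,=\, 3\binom{d+1}{2} - 1 - \binom{d}{2} \,=\, d^2 + 2d - 1.
\]

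For $\dim \mathrm{Eig}_{d,\mathrm{sym}}$, I would factor $\nu_d = \mu_d \circ \nabla$, where $\nabla\colon \mathbb{P}(S_d) \hookrightarrow \mathbb{P}(S_{d-1}^{\oplus 3})$ is the (injective) projectivized gradient. Then $\phi'$ lies in the fiber of $\nu_d$ over the eigenconfiguration of a generic $\phi$ iff $\nabla \phi' = \lambda\, \nabla \phi + h\,(x,y,z)$; taking the inner product with $(x,y,z)$ and using Euler's identity forces $\phi' = \lambda\, \phi + hq/d$ with $q := x^2+y^2+z^2$, and the compatibility $\nabla(hq/d) = h\,(x,y,z)$ is equivalent to the PDE
\[
q\, \nabla h \,=\, (d-2)\, h\, (x, y, z).
\]
Iterating the substitution $h = q \cdot h_1$ (forced because $q$ is irreducible and does not divide $x$, $y$, or $z$) reduces this to the same PDE with $d$ replaced by $d-2$. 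For odd $d$ the recursion terminates only at $h=0$, so the fiber is a single projective point, which gives $\dim \mathrm{Eig}_{d,\mathrm{sym}} = \dim \mathbb{P}(S_d) = \tfrac{1}{2}(d^2+3d)$.

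The main obstacle is the even-$d$ case: here the PDE also admits the nonzero solution $h = c \cdot q^{(d-2)/2}$, so the translation $\phi \mapsto \phi + c\, q^{d/2}$ preserves the eigenconfiguration (one verifies this directly, e.g.\ on the Fermat quartic $x^4+y^4+z^4$), naively lowering the dimension by one. Reconciling this with the uniform formula $\tfrac{1}{2}(d^2+3d)$ is where I expect to spend the most effort: either one must exhibit an additional constraint on the image cutting out the $q^{d/2}$-direction (perhaps traceable to the linear equations of Proposition~\ref{prop:fiveeqns}), or the proof must separate parities and treat the even case by a different route, such as a scheme-theoretic closure or perturbation argument in $(\mathbb{P}^2)^{d^2-d+1}$.
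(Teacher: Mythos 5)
Your realization of $\mathrm{Eig}_d$ and $\mathrm{Eig}_{d,\mathrm{sym}}$ as closures of images of rational maps, and your fiber computation for $\dim\mathrm{Eig}_d$, coincide with the paper's argument (the paper phrases the fiber as the orbit of the group $G$ of matrices $\left(\begin{smallmatrix}1&0\\ f&a\end{smallmatrix}\right)$ with $f\in S_{d-2}$, $a\in K^*$, acting on the Hilbert--Burch presentation). Where you differ is at the end, and there you have actually found a genuine gap \emph{in the paper's own proof}, not merely in your attempt. The paper asserts that $\left(\begin{smallmatrix}1&0\\ f&a\end{smallmatrix}\right)$ maps the gradient locus $U$ into itself if and only if $f=0$, and concludes that the generic fiber of $U^\circ\to\mathrm{Eig}_{d,\mathrm{sym}}$ is the one-dimensional group $H$. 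Your PDE $q\,\nabla h=(d-2)\,h\,(x,y,z)$ is exactly the condition that $(hx,hy,hz)$ be a gradient, and your recursion correctly shows its solution space is $0$ for odd $d$ but the line $K\cdot q^{(d-2)/2}$ for even $d$; so for even $d$ the stabilizing subgroup is two-dimensional, not one-dimensional, and the paper's ``if and only if $f=0$'' is simply false.

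You should therefore abandon the search for a hidden constraint that kills the $q^{d/2}$-direction: there is none. For even $d$ and any $\phi$, the $2\times 2$ minors of $\left(\begin{smallmatrix}x&y&z\\ \nabla(\phi+c\,q^{d/2})\end{smallmatrix}\right)$ are \emph{identical polynomials} to those of $\left(\begin{smallmatrix}x&y&z\\ \nabla\phi\end{smallmatrix}\right)$, because $x_i\cdot(\partial_j q^{d/2})-x_j\cdot(\partial_i q^{d/2})=\tfrac{d}{2}q^{d/2-1}(2x_ix_j-2x_jx_i)=0$; so the two eigenschemes agree exactly, not just set-theoretically. Hence the fiber over a generic eigenconfiguration is the full projective line $\{\lambda\phi+\mu\, q^{d/2}\}$, and the correct dimension for even $d$ is
\[
\dim\mathrm{Eig}_{d,\mathrm{sym}}\;=\;\binom{d+2}{2}-2\;=\;\frac{d^2+3d}{2}-1,
\]
one less than the stated formula. (The case $d=2$ already exhibits the phenomenon: adding $cq$ to a quadratic form shifts all eigenvalues by $c$ and leaves the eigenvectors fixed.) Your proof of the odd case, and the $\mathrm{Eig}_d$ computation, are correct and essentially the paper's; the even case requires the corrected formula above, and the paper's claimed ``if and only if $f=0$'' step must be replaced by the solution of your PDE.
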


\begin{proof}
First we show ${\rm dim}(\mathrm{Eig}_d) = d^2+2d-1$. 
Let $W$ be the set of $2 \times 3$ matrices (\ref{eq:twobythree}).
This is a $3 {d+1 \choose 2}$-dimensional vector space over $K$. 
The group
\[
G = \left\{\left. 
\left(
\begin{array}{cc}
1 & 0 \\
f & a 
\end{array}
\right)
\, \right| \, \mbox{$a \in K \backslash \{0\}$ and $f$ is a ternary form of degree $d-2$}
\right\}
\] 
acts on $W$ by left multiplication. Consider $\varphi, \omega \in W$.
It is immediate to see that if $\omega = g \cdot \varphi$ for some $g \in G$, then the variety defined by the $2\times 2$-minors of $\varphi$ equals the variety defined by the $2 \times 2$-minors of $\omega$. The converse also holds  because of the uniqueness of the Hilbert-Burch matrix.
The set $W^\circ$ of elements in $W$ whose $2 \times 3$-minors define  $d^2-d+1$ distinct points is an open subset of $W$. Therefore,
\begin{eqnarray*}
{\rm dim}(\mathrm{Eig}_d) \,\,=\,\, \dim W^\circ/G & = & \dim W^\circ - \dim G \\
 & = & 3 {d+1 \choose 2} - \left[{d \choose 2}+1\right] 
 \,\, = \,\, d^2+2d-1. 
\end{eqnarray*}

Next we prove ${\rm dim}(\mathrm{Eig}_{d,{\rm sym}}) = \frac{1}{2}(d^2 + 3d)$. 
We introduce the linear subspace
\[ U \, := \, \biggl\{\left. 
\left(
\begin{array}{ccc}
x & y & z \\
\partial \phi/\partial x & \partial \phi/\partial y  & \partial \phi/\partial z
\end{array}
\right) \, \right| \, \phi \,\,\hbox{ternary form of degree $d$} \bigg\} \,\subset \, W. 
\]
The action of the group $G$ on $W$ does not restrict to $U$. In fact, we notice that
\[
\left(
\begin{array}{cc}
1& 0 \\
f & a 
\end{array}
\right)
\left(
\begin{array}{ccc}
x & y & z \\
\partial \phi/\partial x & \partial \phi/\partial y  & \partial \phi/\partial z
\end{array}
\right) \in U
\] 
if and only if $f=0$. 
Let $U^\circ = U \cap W^\circ$ and consider the subgroup
\[
H = \left\{\left. 
\left(
\begin{array}{cc}
1 & 0 \\
0 & a 
\end{array}
\right)
\, \right| \, \mbox{$a \in K \backslash \{0\}$}
\right\} \,\,\subset \,\, G.
\]
This yields  $\,{\rm dim}(\mathrm{Eig}_{d,{\rm sym}}) = \dim U^\circ /H = 3 {d+2 \choose 2}- 1 = \frac{1}{2}(d^2 + 3d)$, 
as desired.  Our configuration spaces $\mathrm{Eig}_{d}$ and $\mathrm{Eig}_{d,{\rm sym}}$ are irreducible varieties
because they contain the irreducible varieties
$W^\circ/G$ and $U^{\circ}/H$ respectively as dense open subsets.
\end{proof}

\section{Real eigenvectors and dynamics}

In this section we focus on the real eigenpoints of
a tensor $A$ in  $(\mathbb{R}^{n})^{\otimes d}$.
If $A$ is generic then the number of eigenpoints in $\mathbb{P}^{n-1}_{\mathbb{C}}$
equals $((d-1)^n-1)/(d-2)$. Our hope is to show
that all of them lie in $\mathbb{P}^{n-1}_{\mathbb{R}}$
for suitably chosen symmetric tensors $\phi$.
A second question is how many of these real eigenpoints
are {\em robust}, in the sense that they are attracting fixed
points of the dynamical system $\nabla \phi : \mathbb{P}^{n-1}_{\mathbb R} 
\dashrightarrow  \mathbb{P}^{n-1}_{\mathbb R}$.
Our results will inform future
numerical work along the lines of \cite[Table 4.12]{CDN}.

We begin with a combinatorial construction for
 the planar case ($n=3$).
 Consider an arrangement $\mathcal{A}$ of $d$ distinct lines
in $\mathbb{P}_{\mathbb R}^2$, and let $\phi$ be the product 
of $d$ linear forms in $x,y,z$ that define the lines in $\mathcal{A}$.
We assume that $\mathcal{A}$ is generic in the sense  that no three
lines meet in a point. Equivalently,
the matroid of $\mathcal{A}$ is a uniform rank $3$ matroid on $d$ elements.
Such an arrangement $\mathcal{A}$ has $\binom{d}{2}$ {\em vertices}
in $\mathbb{P}_{\mathbb R}^2$, and these are the singular points
of the reducible curve $\{ \phi = 0\}$. The 
complement of $\mathcal{A}$  in
$\mathbb{P}_{\mathbb R}^2$ has
$ \binom{d}{2}+1$ connected components, called the
{\em regions} of~$\mathcal{A}$.

We are interested in the eigenconfiguration of $\mathcal{A}$, by which we
mean the eigenconfiguration of the symmetric tensor $\phi$.
Theorem \ref{thm:a} gives the expected number 
\begin{equation}
\label{eq:vertregi} \,
 d^2 - d + 1  \,\,=\,\,1+ (d{-}1) + (d{-}1)^2 \,\,= \,\,
 2 \binom{d}{2} + 1 \,\,= \,
 \hbox{\# vertices} + \hbox{\# regions} .
 \end{equation}
The following result shows that this is not just a numerical coincidence.

\begin{theorem} \label{thm:linesinP2}
A generic arrangement $\mathcal{A}$ 
of $d$ lines in $\mathbb{P}_{\mathbb R}^2$ has
$d^2-d+1$ complex eigenpoints and they are all real.
In addition to the $\binom{d}{2}$ vertices,
which are singular eigenpoints,
each of the $\binom{d}{2}+1$ regions of $\mathcal{A}$ contains precisely one
real eigenpoint.
\end{theorem}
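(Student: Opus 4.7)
The plan is to exhibit $d^2-d+1$ distinct real points satisfying (\ref{eq:rank1sym}) and invoke Theorem~\ref{thm:a} to conclude they constitute the full complex eigenconfiguration. The natural split matches the statement: the $\binom{d}{2}$ pairwise intersections of lines in $\mathcal{A}$, and one distinguished point in each of the $\binom{d}{2}+1$ projective regions.

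First I would handle the arrangement itself. At a vertex $p=\{\ell_a=\ell_b=0\}$ the Leibniz expansion $\nabla\phi=\sum_j\bigl(\prod_{i\ne j}\ell_i\bigr)v_j$, where $v_j$ is the coefficient vector of $\ell_j$, collapses to zero, since each summand carries $\ell_a(p)$ or $\ell_b(p)$; so each vertex satisfies (\ref{eq:rank1sym}) trivially, and no three lines being concurrent makes the $\binom{d}{2}$ vertices distinct. Conversely, at a non-vertex point $x\in\ell_j$ the same expansion reduces to a nonzero multiple of $v_j$; since $v_j\cdot x=\ell_j(x)=0$ this gives $\nabla\phi(x)\perp x$, ruling out (\ref{eq:rank1sym}). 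Thus no eigenpoints lie on the lines of $\mathcal{A}$ besides the vertices.

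All further eigenpoints lie in the open complement and, via the double cover $S^2\to\mathbb{P}^2_{\mathbb{R}}$, correspond to critical points of $\phi|_{S^2}$. Fix a region $R$ and one antipodal spherical preimage $\widetilde R\subset S^2$; after adjusting signs, $\widetilde R=S^2\cap\{\varepsilon_i\ell_i>0:1\le i\le d\}$, the intersection of $S^2$ with an open convex cone in $\mathbb{R}^3$, and hence is geodesically convex (positive combinations of two cone points remain in the cone, so the great-circle arc between any two points of $\widetilde R$ stays in $\widetilde R$). On $\widetilde R$ the function $f=\sum_i\log|\ell_i|$ tends to $-\infty$ on the boundary, so it attains an interior maximum, producing at least one eigenpoint per region. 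For uniqueness I would parametrize any spherical geodesic in $\widetilde R$ as $\gamma(t)=(\cos t)x+(\sin t)v$ with unit tangent $v$, use the identity $\ell_i(\gamma(t))=r_i\cos(t-\theta_i)$ with $r_i>0$ and no zeros on the geodesic, and read off
\[
\frac{d^2}{dt^2}\log|\ell_i(\gamma(t))|\;=\;-\sec^2(t-\theta_i)\;<\;0.
\]
Summing over $i$ shows $f\circ\gamma$ is strictly concave, so $f$ admits at most one critical point in the geodesically convex domain $\widetilde R$, necessarily the maximum already produced. Combined with the vertex count this gives exactly $\binom{d}{2}+(\binom{d}{2}+1)=d^2-d+1$ distinct real eigenpoints, which by Theorem~\ref{thm:a} exhausts the complex eigenconfiguration.

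The main obstacle is the uniqueness half, and specifically its two geometric ingredients: spherical geodesic convexity of each sign region (which relies on the convex-cone description, and is particular to linear arrangements) together with the strict geodesic concavity of $f$ (which rests on the factorization $\ell_i(\gamma(t))=r_i\cos(t-\theta_i)$ valid only for linear forms restricted to great circles). Without this linear structure the Hessian sign along geodesics would no longer be uniformly negative, and a more delicate Morse-theoretic bookkeeping on each spherical polygon would be required.
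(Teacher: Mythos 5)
Your proof is correct and follows essentially the same strategy as the paper's: identify the vertices as singular eigenpoints, lift to $S^2$, show that $\sum_i\log|L_i|$ attains a unique critical point in each region by a concavity argument, and then use the count from Theorem~\ref{thm:a} to conclude these exhaust the complex eigenconfiguration. Where the paper simply asserts that the objective function ``is strictly concave'' on each region, you supply the missing justification: geodesic convexity of each spherical sign region (from the convex-cone description) and strict geodesic concavity via the identity $\ell_i(\gamma(t))=r_i\cos(t-\theta_i)$ along great circles, giving $\tfrac{d^2}{dt^2}\log|\ell_i(\gamma(t))|=-\sec^2(t-\theta_i)<0$. You also add the observation---not spelled out in the paper, though implicit in its counting step---that non-vertex points on the lines cannot be eigenpoints, since there $\nabla\phi$ is a nonzero multiple of the line's normal vector and hence orthogonal to ${\bf x}$.
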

 
\begin{proof}
The singular locus of the curve $\{\phi = 0\}$
consists of the vertices of the arrangement $\mathcal{A}$.
These are the eigenpoints with eigenvalue $0$. Their number is
$\binom{d}{2}$.

Let $L_1,L_2,\ldots,L_d$ be the linear forms that define the lines,
so $\phi = L_1 L_2 \cdots L_d$.
Consider the following optimization problem on the unit $2$-sphere:
$$ {\rm Maximize} \,\,
{\rm log}\, |\phi({\bf x})|\,\,=\,\,
\sum_{i=1}^d {\rm log}\,|L_i(x,y,z)| 
\quad \hbox{subject to}\,\,
x^2 + y^2 + z^2 = 1.
$$
The objective function takes the value $- \infty$ on the $d$ great circles
corresponding to $\mathcal{A}$. On each region of $\mathcal{A}$,
the objective function  
takes values in $\mathbb{R}$, and is
strictly concave. Hence there exists a unique local
maximum ${\bf u}^* = (x^*,y^*,z^*)$
in the interior of each region. Such a
maximum ${\bf u}^*$ is a critical point of 
the restriction of $\phi({\bf x})$ to the unit $2$-sphere.
The  Lagrange multiplier conditions state that
the vector ${\bf u}^*$ is parallel to the gradient of
$\phi$ at ${\bf u}^*$. This means that ${\bf u}^*$ is
an eigenvector of $\phi$, and hence the  pair $\pm {\bf u}^*$ 
defines a real eigenpoint of $\phi$ in the given region of $\mathbb{P}^2_{\mathbb{R}}$.

We proved that each of the $\binom{d}{2}+1$ regions of $\mathcal{A}$
contains one eigenpoint.
In addition, we have the $\binom{d}{2}$ vertices.
By Theorem \ref{thm:a},   the total
number of isolated complex eigenpoints cannot exceed $2 \binom{d}{2} + 1$.
This means that there are no
eigenpoints in $\mathbb{P}^2_{\mathbb{C}}$ other than those
already found. This completes the proof.
\end{proof}

We note that the line arrangement can be perturbed to a
situation where the map 
 $\nabla \phi : \mathbb{P}^{n-1}_{\mathbb R} 
\! \dashrightarrow \! \mathbb{P}^{n-1}_{\mathbb R}$ is regular,
i.e.~none of the eigenvectors has eigenvalue zero.

\begin{corollary}
There exists a smooth curve of degree $d$ in the real projective plane
$\mathbb{P}_{\mathbb R}^2$  whose complex eigenconfiguration consists of $d^2-d+1$ real points.
\end{corollary}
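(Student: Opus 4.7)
The plan is to perturb the line arrangement of Theorem~\ref{thm:linesinP2} to a smooth curve while preserving reality of all eigenpoints. Let $\phi_0 = L_1 L_2 \cdots L_d$ be the product of real linear forms defining a generic line arrangement $\mathcal{A}$ in $\mathbb{P}^2_{\mathbb R}$. By Theorem~\ref{thm:linesinP2}, $\phi_0$ has exactly $d^2-d+1$ distinct real eigenpoints. This equals the generic count from Theorem~\ref{thm:a}, so the eigenscheme of $\phi_0$ is zero-dimensional and reduced, and every eigenpoint is simple.

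Consider the family $\phi_\epsilon = \phi_0 + \epsilon g$, where $g \in \mathbb{R}[x,y,z]_d$ is to be chosen and $\epsilon \in \mathbb{R}$ is small. I would first ensure that the curve $\{\phi_\epsilon = 0\}$ is smooth for small $\epsilon \neq 0$. Away from the $\binom{d}{2}$ nodes of $\{\phi_0 = 0\}$, the form $\phi_0$ is already smooth, so smoothness of $\phi_\epsilon$ persists in a neighborhood by openness. At a node $p$, choose local analytic coordinates $(u,v)$ with $u = L_i$, $v = L_j$ for the two lines meeting at $p$, so that $\phi_0 = uv\cdot h(u,v)$ with $h(0,0)\neq 0$. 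Impose $g(p)\neq 0$ at each node, which is an open condition on $g$. Then $\phi_\epsilon(0,0) = \epsilon g(p) \neq 0$, and solving $\partial_u \phi_\epsilon = \partial_v \phi_\epsilon = 0$ for a nearby zero forces $(u,v) = O(\epsilon)$, which substituted into $\phi_\epsilon = 0$ yields a contradiction for small $\epsilon \neq 0$. Thus the node is smoothed and $\{\phi_\epsilon=0\}$ is smooth.

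For the reality of the eigenpoints I would invoke the implicit function theorem over $\mathbb{R}$. At each simple real eigenpoint $p_0$ of $\phi_0$, the local real-polynomial system defining the eigenscheme (given by the $2\times 2$-minors of the matrix in (\ref{eq:rank1sym})) has nonsingular Jacobian, because the eigenscheme of $\phi_0$ is reduced at $p_0$. Hence there exists a unique real eigenpoint $p(\epsilon)$ of $\phi_\epsilon$ near $p_0$, depending real-analytically on $\epsilon$. Performing this at all $d^2-d+1$ eigenpoints of $\phi_0$ produces $d^2-d+1$ distinct real eigenpoints of $\phi_\epsilon$, and by Theorem~\ref{thm:a} these exhaust the complex eigenconfiguration.

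The main obstacle I expect is the smoothness verification at the nodes; the reality statement is a clean consequence of reducedness of the eigenscheme of $\phi_0$ combined with the real implicit function theorem. One minor technicality to flag is that the two conditions on $g$—smoothing every node, and the smallness of $\epsilon$ needed to keep each perturbed eigenpoint within its real neighborhood—are both open, hence simultaneously achievable for generic $g$ and sufficiently small $\epsilon>0$.
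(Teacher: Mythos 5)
Your proof is correct and takes essentially the same approach as the paper: perturb the product of linear forms and invoke the implicit function theorem, using that the eigenconfiguration of $\phi_0$ is zero-dimensional, reduced, and defined over $\mathbb{R}$. The paper's own proof is terser—it simply says a generic nearby ternary form is smooth and the reduced real eigenconfiguration persists under IFT—whereas you spell out the node-smoothing computation explicitly, but the substance is identical.
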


\begin{proof}
The eigenconfiguration of $\phi = L_1 L_2 \cdots  L_d\,$
is $0$-dimensional, reduced, and defined over $\mathbb{R}$.
By the Implicit Function Theorem,
these properties are preserved when $\phi$ gets perturbed
to a generic ternary form $\phi_\epsilon$ that is close to $\phi$.
\end{proof}

It is interesting to see what happens when the
matroid of $\mathcal{A}$ is not uniform. Here
the eigenconfiguration is not reduced. It arises from
Theorem \ref{thm:linesinP2} by degeneration.

\begin{example} \rm
Let $d=6$ and take $\mathcal{A}$ to be the line arrangement defined by
$$
\phi \,\, = \,\,  x\cdot y \cdot z \cdot (x-y) \cdot (x-z) \cdot (y-z).
$$
This is the reflection arrangement of type $A_4$. Its eigenscheme is non-reduced.
Each of the $12$ regions contains one eigenpoint as before, and
the simple vertices $(1:1:0)$,  $(1:0:1)$, and $(0:1:1)$ are eigenpoints
of multiplicity one. However, each of the triple points
$(1:0:0),(0:1:0),(0:0:1),(1:1:1)$ is an eigenpoint of multiplicity~$4$.
This makes sense geometrically:
in a nearby generic arrangement, such a vertex
splits into three vertices and one new region.
We note that the scheme structure 
at the eigenpoint $(1:0:0)$ is given by the primary ideal
$\langle 2yz-z^2, y^2-z^2 \rangle $.
\end{example}

The concavity argument concerning the optimization problem in the proof of
Theorem \ref{thm:linesinP2} works in arbitrary dimensions,
and we record this as a corollary.

\begin{corollary} \label{cor:numregions}
Each of the open regions of an arrangement $\mathcal{A}$ of $d$ hyperplanes in 
$\mathbb{P}^{n-1}_{\mathbb{R}}$  contains precisely one
real eigenpoint of $\mathcal{A}$. The number of  regions~is
\begin{equation}
\label{eq:numregions}
 \sum_{i=0}^{n-1} \binom{d-1}{i}. 
 \end{equation}
\end{corollary}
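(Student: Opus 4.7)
The plan is to generalize the concavity argument in the proof of Theorem~\ref{thm:linesinP2} from the plane to $\mathbb{P}^{n-1}_{\mathbb{R}}$, and then to invoke a classical count for the regions of a generic central arrangement.

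Let $L_1,\ldots,L_d$ be linear forms defining the hyperplanes of $\mathcal{A}$, and set $\phi=L_1L_2\cdots L_d$. I would consider the constrained optimization
\[
\text{maximize}\,\,\log|\phi(\mathbf{x})|
\,=\,\sum_{i=1}^{d}\log|L_i(\mathbf{x})|
\quad\text{subject to}\quad\|\mathbf{x}\|=1
\]
on the unit sphere $S^{n-1}\subset\mathbb{R}^n$. The objective is $-\infty$ exactly on the union of great spheres cut out by $\mathcal{A}$, and is smooth on each open component $R$ of $S^{n-1}\setminus\mathcal{A}$. The first step is to show strict concavity of $\log|\phi|$ on each $R$. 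Since $R$ is an intersection of open hemispheres, it is geodesically convex, so it suffices to check concavity along each great-circle arc $\gamma(\theta)=\cos\theta\,\mathbf{u}+\sin\theta\,\mathbf{v}$ contained in $R$. On such an arc, $L_i(\gamma(\theta))=c_i\cos(\theta-\theta_i)$ with $c_i>0$ (else $L_i$ would vanish on the whole great circle, contradicting $\gamma\subset R$), and therefore
\[
\frac{d^2}{d\theta^2}\log\bigl|\phi(\gamma(\theta))\bigr|
\,=\,-\sum_{i=1}^{d}\sec^2(\theta-\theta_i)\,<\,0.
\]
Since $\log|\phi|\to-\infty$ on $\partial R$, strict concavity forces a unique interior maximum $\mathbf{u}^\ast\in R$. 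The Lagrange multiplier condition at $\mathbf{u}^\ast$ says that $\nabla\phi(\mathbf{u}^\ast)$ is parallel to $\mathbf{u}^\ast$, so $\mathbf{u}^\ast$ is a real eigenvector of $\phi$. Antipodal regions on $S^{n-1}$ descend to the same projective region and produce the same eigenpoint, so every region of $\mathbb{P}^{n-1}_{\mathbb{R}}\setminus\mathcal{A}$ contains precisely one real eigenpoint.

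For the region count I would invoke the classical Zaslavsky formula: a generic central arrangement of $d$ hyperplanes in $\mathbb{R}^n$ partitions $S^{n-1}$ into $2\sum_{i=0}^{n-1}\binom{d-1}{i}$ chambers, and passing to the projective quotient gives $\sum_{i=0}^{n-1}\binom{d-1}{i}$ regions. Alternatively I would argue by induction on $d$ via deletion-restriction, which yields the Pascal-type recursion $r(n,d)=r(n,d-1)+r(n-1,d-1)$ by deleting one hyperplane $H$ and restricting to it; the base cases $r(n,0)=1$ and $r(n,1)=1$ then match the claimed sum of binomials.

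The main obstacle is the strict concavity step, because $\log|L_i|$ fails to be concave on the ambient sphere; the clean resolution is the second-derivative computation above, which pulls each factor back to a shifted $\log|\cos|$ along any great-circle arc avoiding the hyperplane. Strict concavity is the essential upgrade over Theorem~\ref{thm:linesinP2}, where uniqueness could additionally be read off from the numerical coincidence $d^2-d+1=2\binom{d}{2}+1$ between the total complex eigenpoint count and the number of regions plus vertices. For $n\geq 4$ this coincidence disappears, and concavity becomes the indispensable tool for pinning down one eigenpoint per region.
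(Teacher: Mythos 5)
Your proposal is correct and follows the same route as the paper, which for the first claim simply says ``the same proof as the one for $n=3$ given above'' (Theorem~\ref{thm:linesinP2}) and cites Stanley for the region count. The one genuine value you add is the explicit justification of strict concavity of $\log|\phi|$ on each spherical chamber: the paper asserts it without proof, whereas your pullback to a great-circle arc, the identity $L_i(\gamma(\theta)) = c_i\cos(\theta-\theta_i)$ with $c_i>0$, and the computation $\tfrac{d^2}{d\theta^2}\log|\cos(\theta-\theta_i)| = -\sec^2(\theta-\theta_i)$ together with geodesic convexity of each chamber (intersection of open hemispheres) give a complete argument. This fills an implicit gap but does not change the method.
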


\begin{proof}
The first part has the same proof as the
one for $n=3$ given above.
The formula for the number of regions 
 can be found in \cite[Proposition 2.4]{Sta}.
\end{proof}

Theorem \ref{thm:linesinP2}
is restricted to $n=3$ because hyperplane arrangements
are singular in codimension $1$. Hence the eigenconfiguration
of a product of linear forms in $n \geq 4$ variables has
components of dimension $n-3$ in $\mathbb{P}^{n-1}_{\mathbb R}$.
We conjecture that a fully real eigenconfiguration 
can be constructed in the vicinity of such a tensor.

\begin{conjecture} \label{conj:optimistic}
Let $\phi$ be any product of $d$ nonzero linear forms in
${\rm Sym}_1(\mathbb{R}^n)$.
Every open neighborhood of $\phi$ in
${\rm Sym}_d(\mathbb{R}^n)$ contains
a symmetric tensor $\phi_\epsilon$ such that
all  $\,((d-1)^n-1)/(d-2)$ complex eigenpoints of $\,\phi_\epsilon$  are real.
\end{conjecture}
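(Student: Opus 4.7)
The plan is to proceed by induction on $n$, with base case $n=3$ given by the corollary following Theorem~\ref{thm:linesinP2}. Fix $n \geq 4$ and $\phi = L_1 \cdots L_d$ with a generic hyperplane arrangement $\mathcal{A} = \{H_i = \{L_i = 0\}\}$ in $\mathbb{P}^{n-1}_{\mathbb{R}}$. By Corollary~\ref{cor:numregions}, each of the $\sum_{i=0}^{n-1}\binom{d-1}{i}$ open regions of $\mathcal{A}$ contains a unique real eigenpoint of $\phi$, arising as a strict local maximum of $\log|\phi|$ on the unit sphere. These maxima are nondegenerate critical points, so by the Implicit Function Theorem each persists as a real eigenpoint of any symmetric tensor $\phi_\epsilon$ sufficiently close to $\phi$, disjoint from whatever bifurcates out of the singular locus of $\{\phi = 0\}$.

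The remaining complex eigenpoints of $\phi_\epsilon$ must bifurcate from that singular locus, which is stratified by the intersection flats $L_I = \bigcap_{i \in I} H_i$ for $I \subseteq [d]$ with $|I| \geq 2$. I would analyze these bifurcations stratum by stratum. Near a generic point of a codimension-$k$ stratum $L_I$ (with $|I| = k$), choose transverse coordinates $(y_1, \dots, y_k)$ so that $L_i = y_i$ for $i \in I$; then $\phi = y_1 \cdots y_k \cdot h$ with $h$ nowhere vanishing on $L_I$. After blowing up $L_I$ and rescaling by an appropriate fractional power of $\epsilon$, the eigenvector equations for $\phi_\epsilon = \phi + \epsilon \phi'$ decouple to leading order into a transverse system on $\mathbb{P}^{k-1}$ --- essentially the eigenproblem of the Fermat-style product $y_1 \cdots y_k$, which has only real eigenpoints by the proof of Theorem~\ref{thm:a} --- and a longitudinal eigenproblem on $L_I \cong \mathbb{P}^{n-1-k}_{\mathbb{R}}$ for the product of restrictions $L_j|_{L_I}$ of the $d-k$ remaining linear forms, modulated by $\phi'|_{L_I}$. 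By the inductive hypothesis applied in ambient dimension $n-k < n$, one can choose $\phi'|_{L_I}$ in a nonempty open set so that this longitudinal problem has only real eigenpoints.

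The hard part is compatibility. Since $L_I \subset L_{I'}$ whenever $I' \subset I$, the required perturbation data on nested strata must agree under restriction. A descending induction on $|I|$ --- fixing first the local contributions from isolated flats ($|I| = n-1$) and propagating outward to the flats of smaller codimension --- together with the fact that each local reality condition is open on $\phi' \in {\rm Sym}_d(\mathbb{R}^n)$ and that restriction to any fixed flat is surjective, should show that a suitably generic $\phi'$ realizes all the required local choices at once. The main obstacle is a combinatorial bookkeeping one: verifying that the excess intersection multiplicity of the eigenscheme of $\phi$ along each stratum $L_I$ equals the number of bifurcating eigenpoints produced by the inductive procedure, so that the local counts add up exactly to $((d-1)^n - 1)/(d-2)$.

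A potentially cleaner alternative is a Viro-style real patchworking construction: realize $\phi_\epsilon$ as a patchwork whose sign data, prescribed by the intersection lattice of $\mathcal{A}$ and compatible with the regional local maxima of Corollary~\ref{cor:numregions}, forces every complex eigenpoint to be real. This would bypass the stratum-by-stratum bifurcation analysis and directly exhibit $\phi_\epsilon$, but designing the sign data so that it matches the eigenvector equation (rather than merely the vanishing locus $\{\phi_\epsilon = 0\}$) is itself a nontrivial combinatorial task.
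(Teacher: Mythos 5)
This is stated as a \emph{conjecture} in the paper, not a theorem: the authors themselves offer no proof. They record that it is known for $n \le 3$ (via Theorem~\ref{thm:linesinP2}) and illustrate it for $n=d=4$ with a single numerical example, explicitly leaving the general case open. So there is no ``paper's own proof'' against which to compare your attempt; what you have written is an original proof sketch, and it has to stand on its own.

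As a sketch it is plausible, but it is not a proof, and you say so yourself in several places (``should show,'' ``the main obstacle is a combinatorial bookkeeping one,'' ``is itself a nontrivial combinatorial task''). Beyond those acknowledged gaps, two specific points deserve emphasis. First, the claimed decoupling near a stratum $L_I$ into a transverse Fermat-type problem on $\mathbb{P}^{k-1}$ and a longitudinal problem on $L_I$ is the crux of the whole argument and is not established: the eigenvector equations mix partial derivatives in transverse and longitudinal directions, and it is exactly this coupling that could send eigenpoints off the real locus. Example~\ref{ex:eightyfive} of the paper already makes the point — there, $70$ of the $85$ eigenpoints bifurcate from the ten singular lines, and the authors note explicitly that ``how many are real depends on the choice of $\phi'$,'' so the decoupling cannot be taken for granted and the choice of perturbation must actually be engineered. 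Second, your compatibility step treats the restrictions $\phi'|_{L_I}$ as if they could be chosen nearly independently across the lattice of flats, using only that restriction to a single flat is surjective. But the collection of ``good'' sets is a family of open conditions on a single $\phi'$, and you have not argued that their common intersection is nonempty — openness and surjectivity to each flat separately do not give a simultaneous solution, especially since the inductive hypothesis gives a nonempty open set of good perturbations \emph{per flat} without any control over how those sets sit inside the finite-dimensional space of restrictions. Until both of those are filled in, along with the multiplicity count along each stratum, the argument remains an appealing program rather than a proof, and Conjecture~\ref{conj:optimistic} remains open.
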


This optimistic conjecture is illustrated by the following
variant of Example~\ref{ex:cremona}.

\begin{example}[$n=d=4$] \rm
The classical Cremona transformation in $\mathbb{P}^3$ is
$\nabla \phi$ where $\phi = xyzw$ is the product of the coordinates.
The eigenconfiguration of $\phi$ consists of eight points, one for each sign
region in $\mathbb{P}^3$, and the six coordinate lines.
The expected number (\ref{eq:dustinnum}) of complex eigenpoints is $40$.
Consider the perturbation
$$ \begin{matrix}
 \phi_\epsilon\, = \,
 x y z w \,+\, \epsilon \bigl(
5 x^4 + 4 x^3 y - 2 x^2 y^2 - 8 x y^3 + 7 y^4 + 4 x^3 z + 2 x^2 y z + 2 x y^2 z \, \\
 \qquad \qquad + 2 y^3 z - 6  x^2 z^2 + 6 x y z^2 + 7 y^2 z^2 - 8 x z^3 + 3 y z^3 
 + 8 z^4 - 8 x^3 w + 2 x^2 y w  \\ 
 \qquad \qquad - 3 x y^2 w + 5 y^3 w + 8 x^2 z w 
 - 3 y^2 z w - 5 x z^2 w - 10 y z^2 w + 8 z^3 w - 5 x^2 w^2 \\ 
\qquad  \qquad - 6 x y w^2 
 - 3 y^2 w^2 - 6 x z w^2 + 3 y z w^2 + 3 x w^3 + 3 y w^3 - 4 z w^3 + 3 w^4 \bigr).
\end{matrix}
$$
All $40$ complex eigenpoints of this tensor are real,
so Conjecture~\ref{conj:optimistic} holds for $\phi$.
\end{example}

\begin{remark}
Conjecture \ref{conj:optimistic} is true for $n \leq 3$.
For $n = 3$ this follows from Theorem \ref{thm:linesinP2}:
 we can take $\phi_\epsilon $ to be any perturbation of the
given line arrangement~$\phi$.
For $n=2$ we take $\phi_\epsilon = \phi$ itself
because of the following fact:
if a binary form $\phi(x,y)$ is real-rooted
then also $x \frac{\partial  \phi}{\partial y}  - y \frac{\partial \phi}{\partial x}$ is real-rooted.
This follows from Corollary~\ref{cor:numregions}.
\end{remark}

We put the lid on this paper with a brief discussion of the 
dynamical system $\,\psi : \mathbb{P}^{n-1}_{\mathbb{R}} \dashrightarrow
 \mathbb{P}^{n-1}_{\mathbb{R}} \,$ associated with a tensor
 $A \in (\mathbb{R}^n)^{\otimes d}$. Iterating this map is known
 as the  {\em tensor power method}, and it is used as a tool
 in tensor decomposition \cite{AG}. This generalizes the
   power method of numerical linear algebra for computing
the eigenvectors of a matrix $A \in (\mathbb{R}^n)^{\otimes 2}$.
One starts with some unit vector ${\bf v}$ and repeatedly applies
the map ${\bf v} \mapsto \frac{A{\bf v}}{||A{\bf v}||}$. For generic inputs
 $A$ and ${\bf v}$, this iteration converges to the eigenvector corresponding to the largest
 absolute eigenvalue.

Suppose that ${\bf u} \in \mathbb{P}^{n-1}_{\mathbb{R}}$ is an eigenpoint
of a  given  tensor  $A \in (\mathbb{R}^n)^{\otimes d}$. We say that
${\bf u}$ is a {\em robust eigenpoint} if there exists an open neighborhood
$\mathcal{U}$ of ${\bf u}$ in $\mathbb{P}^{n-1}_{\mathbb{R}}$ such that,
for all starting vectors ${\bf v} \in \mathcal{U}$, the iteration
of the map $\psi$ converges to ${\bf u}$.

\begin{example}[Odeco Tensors] \rm
A symmetric tensor is {\em orthogonally decomposable} 
(this was abbreviated to {\em odeco} by Robeva \cite{Rob}) if it has the form
\[ \phi \,\,=\,\, \sum_{i = 1}^n a_i {\bf v}_i^{\otimes d} \]
where $a_1,\ldots,a_n\in \mathbb{R}$ and 
$\{{\bf v}_1,\ldots,{\bf v}_n\}$ is an orthogonal basis of  
$\mathbb{R}^n$. 
Following \cite{AG}, the robust eigenpoints of an odeco tensor are the
basis vectors ${\bf v}_i$, and they can be computed using the tensor power method.
Up to an appropriate change of coordinates, the odeco tensors are the
Fermat polynomials in (\ref{eq:fermatpoly}). The robust eigenpoints of
$\phi = x_1^d + \cdots + x_n^d$ are the coordinate
points ${\bf e}_1,\ldots,{\bf e}_n$. The region of
attraction of the $i^{\rm th}$ eigenpoint ${\bf e}_i$ under the
iteration of the map $\nabla \phi$ is the set of all points 
in $\mathbb{R}^{n-1}_{\mathbb{R}}$
whose $i^{\mathrm th}$ coordinate is largest in absolute value.
\end{example}

Odeco tensors for $n=d=3$
have three robust eigenvalues. At present we do not know any
ternary cubic $\phi$ with more than three robust eigenpoints.
Theorem~\ref{thm:linesinP2} might suggest that products
of linear forms are good candidates. However, we ran experiments 
with random triples of lines in $\mathbb{P}^2_{\mathbb{R}}$,
and we observed that the number 
of robust eigenvalues is usually one and occasionally zero.
We never found a factorizable ternary cubic $\phi$ with
two or more robust eigenpoints.
The Cremona map in Example \ref{ex:cremona} shows that $\phi = xyz$ is a cubic with zero
robust  eigenpoints.  Here is a similar example that points to the 
connection with frame theory in \cite{ORS}.

\begin{example}[$n=d=3$] \rm
We consider the factorizable ternary cubic
\begin{equation}
\label{eq:fradeco1}
 \phi \,\, = \,\, (2x+ 2 y  - z)(2x -y  + 2 z) (-x  + 2 y + 2 z) . 
 \end{equation}
This equals the  frame decomposable tensor
seen in \cite[Examples 1.1 and 5.2]{ORS}:
\begin{equation}
 \label{eq:fradeco2} \,\,
\phi \,= \, \frac{1}{24} \bigl(
   (-5 x+y+z)^3 \,+\,
(x-5y+z)^3 \,+\,
(x+y-5z)^3 \,+\,
(3x + 3y + 3z)^3 \bigr).
\end{equation}
Its gradient map $\mathbb{P}^2 \dashrightarrow \mathbb{P}^2$ is given by
$$
\nabla \phi \,\, = \,\, 3 \begin{pmatrix}
-4x^2+4xy+4xz+2y^2+yz+2z^2 \\
2x^2+4xy+xz-4y^2+4yz+2z^2 \\
2x^2+xy+4xz+2y^2+4yz-4z^2
\end{pmatrix}.
$$
This has four fixed points and three singular points,
for a total of seven eigenpoints:
$$  
\begin{matrix}
(1:1:-5),\,(1:-5:1),\,  (-5:1:1),\,  (3:3:3),  \\
(2:2:-1), \,\,(2:-1:2),\,\, (-1:2:2).
\end{matrix}
$$
Note that the pairwise intersections of the lines coincide with the coefficient vectors in (\ref{eq:fradeco1}).
By plugging $\nabla \phi$ into itself, we verify that the second iterate map equals
$$ (\nabla \phi)^{\circ 2} \,\, = \,\, \nabla \phi \circ \nabla \phi \,\,\,\, = \,\,\,\,
- 3^6 \phi(x,y,z) \cdot
\begin{pmatrix} \,x & y & z\, \end{pmatrix}^{\! T}.
$$
Hence $ (\nabla \phi)^{\circ 2}$ is the identity map
on all points in $\mathbb{P}^2_{\mathbb{R}} \backslash \{\phi = 0 \}$.
Every such point lies in a limit cycle of length two. 
The points on the curve $\{\phi = 0\}$ map to the singular points.
We conclude that the ternary cubic $\phi$ has no robust eigenpoints.
\end{example}

\bibliographystyle{amsalpha}

\end{document}